\newcommand{\hide}[1]{}
\theoremstyle{plain}
\newtheorem{thm}{Theorem}[section]
\newtheorem{prop}[thm]{Proposition}
\newtheorem{claim}[thm]{Claim}
\newtheorem{cor}[thm]{Corollary}
\newtheorem{lem}[thm]{Lemma}
\newtheorem*{Cartan-Dieudonne}{Cartan-Dieudonne theorem {\rm (\cite [Chapter I, Theorem 7.1]{Lam})}}
\theoremstyle{definition}
\newtheorem{defi}[thm]{Definition}
\newtheorem{rem}[thm]{Remark}
\newtheorem{nota}[thm]{Notation}
\theoremstyle{remark}
\newcommand{\HH}{{\mathbb H}}
\newcommand{\CC}{{\mathbb C}}
\newcommand{\QQ}{{\mathbb Q}}
\newcommand{\RR}{{\mathbb R}}
\newcommand{\PP}{{\mathbb P}}
\title{Twistor triangles in the period domain of complex tori}
\author{Nikolay Buskin}
\address{Department of Mathematics, University of California San Diego, 9500 Gilman Drive \# 0112, La Jolla, CA 92093-0112, USA}
\email{nvbuskin@gmail.com}
\begin{document}
\begin{abstract}
We study the geometry of (generalized) twistor triangles $\triangle J_1J_2J_3$ in the period domain
of compact complex tori of  complex dimension $2n$ by the means of the representation theory 
of algebras (of real dimension 8) generated by the complex structure operators $J_1,J_2,J_3$. 
Considering the period domain as a homogeneous space for $G=GL_{4n}(\RR)$,
we introduce on it a $G$-invariant pseudometric and define
pseudometric invariants, helping us (generally) to distinguish triangles from a reasonably defined class up to $G$-equivalence.
\end{abstract}
\setcounter{tocdepth}{1}
\sloppy
\maketitle
\tableofcontents
\section{Introduction}

We call a manifold $M$ of a real dimension $4m$ a {\it hypercomplex} manifold, if  there exist 
(integrable) complex structures
$I,J,K$ on $M$ satisfying the quaternionic relations $$I^2=J^2=K^2=-Id, IJ=-JI=K.$$
The ordered triple $(I,J,K)$ is called a {\it hypercomplex structure} on $M$.

A Riemannian $4m$-manifold $M$ with a metric $g$ is called {\it hyperk\"ahler} with 
respect to $g$ (see \cite[p. 548]{Hitchin}), 
if there exist complex structures $I$, $J$ and $K$ on $M$, such that 
$I,J,K$ are covariantly constant and 
are isometries of the tangent bundle $TM$ with respect to $g$, satisfying
the above quaternionic relations.
We call the  ordered triple $(I,J,K)$ of such complex structures {\it a hyperk\"ahler
structure on $M$ compatible with $g$}. 

Every hyperk\"ahler manifold $M$ naturally carries the  underlying hypercomplex structure
and is thus hypercomplex.
A hypercomplex 
 structure $(I,J,K)$ gives rise to a sphere $S^2$ of complex structures on $M$:
$$S^2=\{aI+bJ+cK| a^2+b^2+c^2=1\}.$$

We call the family $\mathcal M=\{(M,\lambda)| \lambda \in S^2\}  \rightarrow S^2$ a {\it twistor family over the twistor sphere $S^2$}. The family $\mathcal M$ can be endowed with a complex structure,
so that it becomes a complex manifold and the fiber $\mathcal M_\lambda$ is biholomorphic to
the complex manifold $(M,\lambda)$, see \cite[p. 554]{Hitchin}.

The well known examples of compact hyperk\"ahler manifolds are even-dimensional complex tori and irreducible holomorphic symplectic manifolds ({\it IHS manifolds}).
We recall that an IHS manifold is a simply connected compact K\"ahler manifold $M$  with $H^0(M,{\Omega}_M^2)$ generated by
an everywhere non-degenerate holomorphic 2-form $\sigma$.

It is known that in the period domain of 
an IHS manifold any two periods can be connected
by a path of twistor lines arising from the corresponding hyperk\"ahler structures, see 
the work of Verbitsky, \cite{Verb-Torelli}, and its short exposition in  \cite{Bourbaki}. 
The twistor path connectivity of each of the two connected components of the period domain
of complex tori was proved in \cite{Twistor-lines}. 

Let us recall the construction of this period domain.
Let   $V_\RR$ be a real vector space of real dimension
$4n$.  The compact complex tori of complex dimension $2n$,
as real smooth manifolds, are quotients $V_\RR/\Gamma$ of $V_\RR$ by a lattice $\Gamma$
and the complex structure of such a torus is given by an endomorphism $I\colon V_\RR \rightarrow V_\RR, I^2=-Id$.
Following \cite{Twistor-lines}, we denote the period domain of compact complex tori of complex dimension $2n$ by $Compl$, as a
set of imaginary endomorphisms of $V_\RR$ it is diffeomorphic to the orbit $G\cdot I$, where 
$G=GL(V_\RR)=GL(4n,\RR)$ acts via the adjoint action, $g \cdot I =g(I)=gIg^{-1}$. We have $G\cdot I\cong G/G_I$,
where $G_I$ is the adjoint action stabilizer of $I$, $G_I=GL((V_\RR,I))\cong GL_{2n}(\CC)$. 
As the $Ad\, G$-action 
is the only action we will be dealing with, we will simply refer to it as the $G$-action. 
A twistor sphere $S=S(I,J)\subset Compl \subset End\,V_\RR$ determines an embedding 
of the algebra of quaternions $\HH \hookrightarrow End\,V_\RR$, we call the image of such embedding
{\it the algebra of quaternions associated with $S$}. We define $G_\HH\subset G$ to be the pointwise stabilizer of  $\HH$ in $End\,V_\RR$, or, what is the same, of the sphere $S$.
 We obviously have $G_\HH=G_I\cap G_J$. 

The $G$-action on $Compl$ naturally extends to the $G$-action on subsets of $Compl$,
in particular, on twistor lines and on configurations of those.

 The period domain $Compl$ consists
of two connected components, corresponding to two connected components of
$G$. We have the embedding of $Compl$ into the Grassmanian $Gr(2n,V_\CC)$ of $2n$-dimensional
complex subspaces in $V_\CC=V_\RR\otimes \CC$ given by $$Compl \ni I \mapsto (Id-iI)V_\RR \in Gr(2n,V_\CC),$$ which maps $Compl$ biholomorphically onto an open subset of $Gr(2n,V_\CC)$, whose
complement is the real-analytic locus $\mathcal L_\RR =\{U \in Gr(2n,V_\CC)|\, U\cap V_\RR \neq \{0\}\}$
of $2n$-dimensional complex subspaces in $V_\CC$ having nontrivial intersection with $V_\RR$.
This locus $\mathcal L_\RR$ is of real codimension 1 in $Gr(2n,V_\CC)$ and it cuts $Gr(2n,V_\CC)$ into two
pieces each of which is the corresponding component of $Compl$, the components correspond,
non-canonically, to the connected components $GL^+(V_\RR),GL^-(V_\RR)$ 
of $GL(V_\RR)$. 

For further discussion of twistor lines and the configurations of those we 
need the following lemma, which summarizes technical results proved in  \cite{Twistor-lines}, see also  \cite{Generalized-twistor-lines}.
\begin{lem}
\label{Two-point-lemma}
Let $S_1,S_2\subset Compl \subset End\,V_\RR$ be any two  twistor lines. 
If the intersection $S_1\cap S_2$ contains  points that are linearly independent as vectors in 
$End\,V_\RR$, 
then $S_1=S_2$. In particular, any two distinct twistor lines $S_1,S_2\subset Compl$
are either disjoint or $S_1\cap S_2$ consists of a pair of antipodal points $\pm I$. 
If $S\subset Compl$ is a twistor line and $I_1,I_2\in S$ are linearly independent,
then $G_{I_1}\cap G_{I_2}=G_\HH$, where $\HH\subset End\,V_\RR$ is the algebra of quaternions associated
with $S$. 
\end{lem}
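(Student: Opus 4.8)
The plan is to deduce all three assertions from a single structural fact about the quaternion algebras associated with twistor lines. For a twistor line $S=S(I,J)$ with associated algebra $\HH=\langle Id,I,J,K\rangle\subset End\,V_\RR$, I would first record the intrinsic description
\[
S=\{q\in\HH\mid q^2=-Id\},
\]
which follows by writing $q=tId+aI+bJ+cK$ and expanding: $q^2=-Id$ forces $t=0$ and $a^2+b^2+c^2=1$, so $q$ is a unit imaginary quaternion, and conversely every such $q$ squares to $-Id$. Thus $S$ is determined by $\HH$ alone, and, since $(-q)^2=q^2$, the line $S$ is stable under $q\mapsto -q$.

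The key step is the \emph{generation claim}: any two linearly independent points $I_1,I_2$ of a twistor line $S$ generate its algebra $\HH$ as a subalgebra of $End\,V_\RR$. Writing $I_1,I_2$ in the basis $I,J,K$ as imaginary quaternions with coordinate vectors $v_1,v_2\in\RR^3$, quaternion multiplication gives
\[
I_1I_2=-\langle v_1,v_2\rangle\,Id+(v_1\times v_2)\cdot(I,J,K).
\]
Linear independence of $I_1,I_2$ means $v_1,v_2$ are linearly independent, so $v_1\times v_2\neq 0$ and is orthogonal to $\mathrm{span}(v_1,v_2)$; hence the imaginary part of $I_1I_2$ is transverse to $\mathrm{span}(I_1,I_2)$, and the four elements $Id,I_1,I_2,I_1I_2$ are linearly independent. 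As $\HH$ is $4$-dimensional and contains all four, they form a basis, and since each lies in the subalgebra generated by $I_1,I_2$ (note $Id=-I_1^2$), that subalgebra is all of $\HH$.

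Granting this, the first assertion is immediate: if $I_1,I_2\in S_1\cap S_2$ are linearly independent, with associated algebras $\HH_1,\HH_2$, then both $\HH_1$ and $\HH_2$ equal the subalgebra of $End\,V_\RR$ generated by $I_1,I_2$, so $\HH_1=\HH_2$, and the intrinsic description forces $S_1=S_2$. The second assertion follows by contraposition: if $S_1\neq S_2$, no two points of $S_1\cap S_2$ can be linearly independent, so any two are proportional; being unit imaginary quaternions they must be antipodal, and since each $S_i$ is stable under $q\mapsto -q$, a nonempty intersection is exactly a pair $\{\pm I\}$. For the third assertion, $G_{I}$ is the centralizer of $I$ in $G=GL(V_\RR)$ and $G_\HH$ is the centralizer of all of $\HH$; hence $G_{I_1}\cap G_{I_2}$ consists of those $g$ commuting with both $I_1$ and $I_2$, equivalently with the subalgebra they generate, which is $\HH$ by the generation claim, giving $G_{I_1}\cap G_{I_2}=G_\HH$.

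The only place requiring genuine care is the generation claim, and there the one nontrivial input is the cross-product term $v_1\times v_2$ in $I_1I_2$: everything hinges on this term being nonzero and transverse to $\mathrm{span}(I_1,I_2)$ precisely when $I_1,I_2$ are linearly independent. I expect this to be the main (though still routine) obstacle, the remainder being bookkeeping with the intrinsic description of $S$. A coordinate-free variant would first handle anticommuting (orthogonal) pairs and then reduce the general independent case to that one by an orthogonalization within $\mathrm{Im}\,\HH$, but the cross-product computation is the most transparent route.
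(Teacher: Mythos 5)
Your proof is correct. There is a wrinkle in the comparison: the paper does not prove Lemma \ref{Two-point-lemma} at all---it is explicitly stated as a summary of technical results imported from \cite{Twistor-lines} and \cite{Generalized-twistor-lines}---so your argument supplies a self-contained proof where the paper offers only a citation. That said, your route is in substance the same mechanism the paper points to: your generation claim is precisely the parenthetical remark the paper quotes from \cite{Generalized-twistor-lines} (two non-proportional points of $S$ generate the associated $\HH$), and your cross-product computation is equivalent to the anticommutator criterion $J_1J_2+J_2J_1=2\alpha\, Id$, $|\alpha|<1$, since for unit imaginary quaternions $I_1I_2+I_2I_1=-2\langle v_1,v_2\rangle Id$ and $|\langle v_1,v_2\rangle|<1$ exactly when $v_1,v_2$ are linearly independent. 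The only step you leave tacit is that $Id,I,J,K$ are linearly independent in $End\,V_\RR$ (needed both for the intrinsic description $S=\{q\in\HH \mid q^2=-Id\}$ and for reading off the coordinate vectors $v_1,v_2$); this is immediate because $\HH$ is a simple algebra, so the embedding $\HH\hookrightarrow End\,V_\RR$ is faithful, but it is worth saying explicitly since the entire bookkeeping rests on it.
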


This lemma tells us that every twistor line $S$ is uniquely determined by  any two non-proportional points
$I_1,I_2$ in $S$, allowing us to write $S=S(I_1,I_2)$ (here $I_1,I_2$ need not anticommute). Note,
that it is not true that any two points $I_1,I_2 \in Compl$ belong to a twistor sphere (this will actually be explained later).

Let $I_1,I_2,I_3$ be complex structure operators on  $V_\RR$, belonging to the same twistor sphere $S\subset Compl$  
  and linearly independent as vectors in $End\,V_\RR$. 
We are not assuming here that $I_1,I_2$ and $I_3$ satisfy quaternionic identities.
By Lemma \ref{Two-point-lemma} we have $G_\HH=G_{I_1}\cap G_{I_2}=G_{I_1}\cap G_{I_2}\cap G_{I_3}$. 
The main result of \cite{Twistor-lines} is that the triple intersection of submanifolds
$G_{I_1}/G_{\HH}, G_{I_2}/G_\HH$ and $G_{I_3}/G_\HH$ in $G/G_\HH$
at $eG_\HH$ is transversal (\cite[Prop. 3.5]{Twistor-lines}).

The transversality
at  $eG_\HH$ means that for  every triple $(g_1,g_2,g_3)$ with $g_{j} \in G_{I_j}, j=1,2,3$, close enough to $e \in G$ 
(and thus determining $g_jG_\HH \in G/G_\HH$ 
close enough, in the respective topology, to $eG_\HH$),
we have that $g_{1}g_{2}g_{3} \in G_\HH$ if and only if $g_{j} \in G_\HH$ for every $j=1,2,3$.
Speaking informally, the groups $G_{I_1},G_{I_2},G_{I_3}$  are independent (modulo $G_\HH$) near 
$G_{I_1}\cap G_{I_2}\cap G_{I_3}=G_\HH$. 

One may ask if there exist general (not necessarily close to $e$)
$g_{1} \in G_{I_1},g_{2}\in G_{I_2},g_{3}\in G_{I_3}$,
 such that
we have the relation $g_{1}g_{2}g_{3} \in G_\HH$, and, in general, 
one can ask
what is the whole fiber $m^{-1}(G_\HH)$ of the multiplication map
$$m\colon G_{I_1}\times G_{I_2}\times G_{I_3} \rightarrow G, (g_1,g_2,g_3)\mapsto g_1g_2g_3,$$
where the Cartesian product is merely  a product of sets.
%
Again, informally, this question is about how ``independent'' 
the subgroups $G_{I_j}\subset G$ are in global and what kind of relations of the specified type may arise. 
We answer this question in Theorem \ref{Theorem-relation}, where we give an explicit description of the fiber  $m^{-1}(G_\HH)$. On our way to the formulation of Theorem \ref{Theorem-relation} we need to develop
some geometry of twistor lines related to $m^{-1}(G_\HH)$.  
\subsection{Triangles} 
%
%
Let us consider a more general relation $g_{1}g_{2}g_{3}(S)=S$, that is,
$(g_1,g_2,g_3) \in m^{-1}(G_S)$, where $G_S$ is the stabilizer in $G$ of $S$
as a set. 
Assume there is a triple $(g_{1},g_{2},g_{3}) \in m^{-1}(G_S)$ 
%
 and this triple is sufficiently nontrivial,
in the sense that the twistor lines $S, g_{2}(S)$ and $g_{2}g_{3}(S)=g_{1}^{-1}(S)$ 
are all distinct.
Then these twistor lines $S, g_{2}(S)$ and $g_{2}g_{3}(S)=g_{1}^{-1}(S)$
are consecutive, that is, their pairwise intersections are nonempty, and those are actually the pairs
of points (listed in the respective order) $\{\pm I_2\}=S \cap g_{2}(S), \{\pm g_2(I_3)=\pm g_{2}(g_{3}(I_3))\} =g_{2}(S \cap g_{3}(S))=g_{2}(S)\cap g_{1}^{-1}(S)$ and 
$\{\pm I_1\}=g_{1}^{-1}(S)\cap S$. 
Thus we obtain a triangle, formed by the three consecutive twistor lines.

\vspace*{0.5cm}
\input{triangle1.pic}
\vspace*{-12.5cm}
\begin{center}
Picture 1: Obtaining a twistor triangle from $g_{1}g_{2}g_{3}(S)=S$.
\end{center}

On the opposite, given three consecutive twistor lines, we can find three complex structures $I_1,I_2,I_3 \in S$,
where $S$ is one of these lines, and elements $g_{j}\in G_{I_j},j=1,2,3$, 
such that $(g_1,g_2,g_3)\in m^{-1}(G_S)$, that is, $S=g_{1}g_{2}g_{3}(S),g_{2}(S),g_{2}(g_{3}(S))$ constitute our triple of consecutive lines. Indeed, let $S_1,S_2,S_3$ be the consecutive twistor lines. 
Choose $I_1\in S_1 \cap S_3$ and $I_2\in S_1 \cap S_2$. As we know from \cite{Twistor-lines}
 or \cite{Generalized-twistor-lines}, 
the $G$-action stabilizer $G_{I}\subset G$ of $I\in Compl$ acts transitively on the set of twistor spheres containing $I$, so that we can find elements $g_2\in G_{I_2}$ such that $S_2=g_2(S_1)$
and $g_1 \in G_{I_1}$ such that $S_3=g_1^{-1}(S_1)$. 
 Next, choose $J_3\in S_2 \cap S_3$ and set $I_3=g_2^{-1}(J_3)\in S$, choose 
$f_3\in G_{J_3}$ such that $f_3(S_2)=S_3=g_1^{-1}(S_1)$. Then setting $g_3=g_2^{-1}f_3g_2$
we get that $g_3\in G_{I_3}$ and $g_1g_2g_3(S_1)=g_1g_2\cdot g_2^{-1}f_3g_2(S_1)=
g_1f_3(S_2)=g_1(g_1^{-1}(S_1))=S_1$, so that $(g_1,g_2,g_3)\in m^{-1}(G_{S_1})$, as required.

%
Further we give a rigorous definition of 
a (generalized) twistor triangle and relate to every twistor triangle
a certain real associative algebra $\mathcal H$ of dimension $8$. 
%
The  properties of the algebras $\mathcal H$ are formulated in  Theorem \ref{Algebraic-theorem}. 
The representation theory of such algebras, summarized in Theorem \ref{Main-theorem}, 
will allow us to prove Theorem \ref{Theorem-relation}. 

%

\subsection{Generalized triangles}It is natural to generalize
the notion of a twistor triangle, in order to proceed with the classification of
representations of the associated algebras $\mathcal H$. Let us explain this
generalization. 

It is easy to see, and this is explained in \cite{Generalized-twistor-lines} that two non-proportional complex structures
$J_1,J_2$ belong to the same, uniquely defined, twistor sphere $S$ if and only if $J_1J_2+J_2J_1=2\alpha Id$
for some $\alpha \in \RR$ such that $|\alpha|<1$ (such $J_1$ and $J_2$ generate the subalgebra  
$\HH\subset End\, V_\RR$ associated with $S$). This fact provides a natural generalization
of the notion of a twistor sphere, namely,  if $J_1J_2+J_2J_1=2\alpha Id$ for some general 
$\alpha \in \RR$ and $J_1\neq \pm J_2$, then there is a canonically defined 
complex-analytic curve $S(J_1,J_2)$
in $Compl$ containing $\pm J_1,\pm J_2$, it is the intersection of the subalgebra in $End\,V_\RR$, generated by $J_1,J_2$ with $Compl\subset End\,V_\RR$.  

In case of $|\alpha|\geqslant 1$ this curve is a non-compact 
curve that we will call {\it a non-compact twistor line}, as opposed to the earlier considered compact twistor lines. If we do not specify whether a twistor line is compact or not, we can talk about it as 
a {\it generalized twistor line}.
The geometry of such curves is studied in \cite{Generalized-twistor-lines}, 
where it is shown, in particular, that the (analytic or Zariski topology) closures of non-compact twistor lines 
in $Gr(2n,V_\CC)\supset Compl$ are $\mathbb{P}^1$'s. 

We generalize accordingly  the notion of a twistor triangle, namely we call 
an ordered triple  of complex structures $(J_1,J_2,J_3)$
{\it a (generalized) twistor triangle $\triangle J_1 J_2 J_3$}, if $J_1J_2+J_2J_1=2\alpha Id$,
$J_2J_3+J_3J_2=2\beta Id$ and 
$J_1J_3+J_3J_1=2\gamma Id$ for some $\alpha, \beta,\gamma \in \RR$ (with no restrictions
on their absolute values now). It is natural  not to require that the sides $S(J_1,J_2), S(J_2,J_3), S(J_3,J_1)$
are all distinct.  Two triangles $\triangle J_1 J_2 J_3$ and $\triangle K_1 K_2 K_3$ are said to be
$G$-{\it equivalent}, if there is $g \in G$ such that $K_l=g(J_l),l=1,2,3$, we emphasize the
importance of the order of vertices. 

Note, that an ordered triple of distinct, pairwise intersecting twistor lines $S_1,S_2,S_3$
does not determine uniquely a twistor triangle, as the intersection
of any two twistor lines $S_i\cap S_j$ consists of two distinct points, so that we indeed need to specify
an ordered triple of points, not only a triple of sides.

Now we introduce the algebra $\mathcal H$, associated to $\triangle J_1 J_2 J_3$,
\begin{multline}
\label{H-definition}
\mathcal H=\mathcal H_{\alpha,\beta,\gamma}=\mathcal H(e_1,e_2,e_3)
=\langle e_1,e_2,e_3\,|\,e_1^2=e_2^2=e_2^2=-1, \\
e_1e_2+e_2e_1=2\alpha, 
e_2e_3+e_3e_2=2\beta,e_3e_1+e_1e_3=2\gamma\rangle.
\end{multline}
This algebra has real dimension $8$. 
By $\mathcal H(J_1,J_2,J_3)$ we denote the homomorphic image of $\mathcal H(e_1,e_2,e_3)$ in $End\, V_\RR$ under the homomorphism $e_i\mapsto J_i, i=1,2,3$. 

The problem of classification of the twistor triangles up to $G$-action is equivalent to the problem of 
 classification of all representations $\rho\colon \mathcal H \rightarrow End\, V_\RR$
up to $G$-isomorphism ($G$-equivalence). 


It is easy to study the irreducible representations of $\mathcal H$ (and thus arbitrary representations) 
when  the 8-dimensional algebra $\mathcal H$ contains the algebra of quaternions $\HH$
as a subalgebra, as then $\dim_\HH \mathcal H=2$ and it is really easy to write down 
the (left or right) regular representations for such $\mathcal H$. 
This is the case, as we have seen, for example, when one of $|\alpha|,|\beta|,|\gamma|$ is strictly less than 1.
In fact, as we will see later, $\mathcal H$ may contain $\HH$ even when none of these strict inequalities holds. 

The above mentioned restricted class of triangles is defined to be the set of those triangles $\triangle J_1 J_2 J_3$ for which the respective algebra $\mathcal H_{\alpha,\beta,\gamma}$ 
(and hence $\mathcal H(J_1,J_2,J_3)$)
contains $\HH$ as a subalgebra, we call such algebra $\mathcal H=\mathcal H_{\alpha,\beta,\gamma}$ {\it quaternionic}. 
%
%
The classification of representations of quaternionic $\mathcal H$, and, thus,
of the triangles from the restricted class, 
 is the content of  Theorem \ref{Main-theorem}.  
Theorem \ref{Main-theorem} relies heavily on Theorem \ref{Algebraic-theorem}, which 
specifies the necessary and sufficient conditions on $\alpha,\beta,\gamma$ in order for  $\mathcal H_{\alpha,\beta,\gamma}$
to contain $\HH$, and proves, in particular,
that, up to isomorphism, there are just three quaternionic algebras $\mathcal H$. 


After all we return to the original question of describing the 
fiber $m^{-1}(G_\HH)$ which is done, as we said  earlier, 
in Theorem \ref{Theorem-relation}.

Let us now get to introducing a machinery, which allows to formulate the ``quaternionic restrictions''
on $(\alpha,\beta,\gamma)$ in a convenient, compact, form.

\subsection{The pseudometric}
\label{subsec-pseudometric}
In this subsection we introduce a pseudometric on $Compl$, which will later be used for defining
the pseudometric invariants of our twistor triangles, that will help us in distinguishing them up to 
$G$-action.  
We define a symmetric bilinear form on $End\, V_\RR$ 
by  $$(A,B)=-\frac{1}{4n}tr(AB).$$
This form is clearly positive on  the vectors corresponding to complex structure operators, that is, vectors
 in $Compl$. 
If we choose  an 
 inner product on $V_\RR$, then there 
is the decomposition $End\,V_\RR =A\oplus S$, where $A$ and $S$ are subspaces of antisymmetric and, respectively, symmetric
operators. The decomposition is orthogonal with respect to the form $(\cdot,\cdot)$, the form $(\cdot,\cdot)$ is positive on $A$, negative on $S$, 
 so that it  has the signature $(8n^2-2n,8n^2+2n)$ (we write the signature of a non-degenerate form as
a pair $(n_+,n_-)$). 
This form is clearly $G$-invariant. 
Let us choose a complex structure operator $I\in Compl$, orthogonal with respect to the inner form on $V_\RR$. 
 Identifying $T_ICompl$ with the subspace of operators, anticommuting with $I$, and further decomposing
$T_ICompl\cong A_I\oplus S_I$ into the respective subspaces of antisymmetric and symmetric operators,
 we can see that $(\cdot,\cdot)|_{T_ICompl}$
has signature $(4n^2-2n, 4n^2+2n)$. As $G$ acts transitively on $Compl$ we see that the restriction of $(\cdot,\cdot)$
to $T_{I_1}Compl$ for every $I_1 \in Compl$ has the same signature, thus $(\cdot,\cdot)|_{TCompl}$ determines a pseudo-riemannian metric
on $Compl$. Note that for a tangent vector $J \in T_ICompl$, $JI=-IJ$ and $J^2=-Id$ we have that 
$(J,J)=-\frac{1}{4n}tr(J^2)>0$,
thus the tangent 2-plane $T_IS$ to an arbitrary compact twistor spheres $S=S(I,J)$, which is explicitly written as 
$T_IS=\langle J,K\rangle$ for $K=IJ$,  is positive  with respect to this pseudo-riemannian metric.

For the case of a generalized twistor line determined by a pair $J_1\neq \pm J_2$ of complex structures,
$J_1J_2+J_2J_1=2\alpha Id$,
we have that the restriction of our indefinite metric to the plane $\langle J_1,J_2\rangle_\RR$
is positive definite if and only if $|\alpha| < 1$, thus, in the latter case
we can define  $\cos \, \sphericalangle J_1J_2 = \frac {(J_1,J_2)}{\sqrt{(J_1,J_1)}\sqrt{(J_2,J_2)}}=-\alpha$.
%
 For $J_1,J_2$ determining a compact twistor sphere 
 the angle $\sphericalangle J_1J_2$ is the length of one of two arcs of the great circle in $S(J_1,J_2)$
through $J_1$ and $J_2$. This is easy to see using the parametrization $t \mapsto e^{tJ}J_1e^{-tJ}$ of the great circle 
in $S$ containing $J_1, J_2$, where
$J \in S$ is a complex structure
 anticommuting with both $J_1,J_2$). 

If $|\alpha|\geqslant 1$ the twistor line determined by $J_1,J_2$ is non-compact, in this case the restriction $(\cdot,\cdot)|_{\langle J_1,J_2\rangle}$
is indefinite, being degenerate precisely when $|\alpha|=1$.

\subsection{The invariants and the formulations of the results}

For a generalized twistor triangle $\triangle J_1J_2J_3$ introduce $$T(\triangle J_1J_2J_3):=
\left(\frac{1}{4n}Tr\, J_1J_2, \frac{1}{4n}Tr\, J_2J_3, \frac{1}{4n}Tr\, J_3J_1\right)=(\alpha,\beta,\gamma).$$
If the triangle $\triangle J_1 J_2 J_3$ is compact, then, as follows from the above discussion, the triple
$T(\triangle J_1 J_2 J_3)$ has a clear geometric meaning, namely 
$T(\triangle J_1 J_2 J_3)=(-\cos\sphericalangle J_1 J_2, -\cos\sphericalangle J_2 J_3, -\cos\sphericalangle J_3 J_1)$. 

Formula (\ref{H-definition}) introduces a real associative algebra 
$\mathcal H$ 
of dimension 8 on three letters $e_1,e_2,e_3$.  
%
In general, a set of generators $f_1,f_2,f_3$ of the algebra $\mathcal H$ that are imaginary units, that is, $f_i^2=-1$, $i=1,2,3$, satisfying the relations $f_1f_2+f_2f_1=2\alpha^\prime,f_2f_3+f_3f_2=2\beta^\prime,f_3f_1+f_1f_3=2\gamma^\prime$
 is called a {\it standard set of generators corresponding to $(\alpha^\prime,\beta^\prime,\gamma^\prime)$} and we say that the triple $(\alpha^\prime,\beta^\prime,\gamma^\prime)$ {\it represents}
$\mathcal H$.  
Note that algebra $\mathcal H$ may be represented by sufficiently different triples, 
so that we may have an isomorphism $\mathcal H=\mathcal H_{\alpha,\beta,\gamma}\cong 
\mathcal H_{\alpha^{\prime},\beta^{\prime},\gamma^{\prime}}$  
for the triple $({\alpha^{\prime},\beta^{\prime},\gamma^{\prime}})$ not reducing to permutations of
the original triple $(\alpha,\beta,\gamma)$ and scalings of the kind $\alpha \mapsto -\alpha$. 

Introduce  a bilinear form on $\mathcal H$, $$q(u,v)=\frac{1}{\dim_\RR\, \mathcal H}Tr(\rho_{reg}(uv)),{\dim_\RR\, \mathcal H}=8,$$
where $\rho_{reg}\colon \mathcal H \to End\,\RR^8$ is the (left or right) regular representation of $\mathcal H$, 
and set $Q_{\alpha,\beta,\gamma}=q|_{\langle e_1,e_2,e_3\rangle}$. The relations of $\mathcal H$ 
easily imply  that the matrix
of $Q_{\alpha,\beta,\gamma}$ in the  basis $e_1,e_2,e_3$ is
\[
Q_{\alpha,\beta,\gamma}=\left(\begin{array}{ccc}
-1 & \alpha & \gamma \\
\alpha & -1 & \beta \\
\gamma & \beta & -1 \\
\end{array}\right).
\]
 
We will also denote such $Q_{\alpha,\beta,\gamma}$  by $Q$. 
We have 
$\det\, Q_{\alpha,\beta,\gamma}=\alpha^2+\beta^2+\gamma^2+2\alpha\beta\gamma-1$. 


The triangle $\triangle \rho_{reg}(e_1)\rho_{reg}(e_2)\rho_{reg}(e_3) \subset  End\, \RR^8,i=1,2,3$  
will be denoted for short by $\triangle e_1 e_2 e_3$.



If the algebra homomorphism $\mathcal H(e_1,e_2,e_3) \rightarrow \mathcal H(J_1,J_2,J_3),e_i\mapsto J_i,$ is 
not an isomorphism, we say that the triangle $\triangle J_1J_2J_3$ is {\it degenerate}.
Introduce the following algebras $\mathcal H(\varepsilon)$ for $\varepsilon=-1,0,1$, 
$$\mathcal H(\varepsilon)=\langle i,j,c|\, i^2=j^2=-1,ij+ji=0, ic=ci,jc=cj,c^2=\varepsilon\rangle\cong \HH\oplus \HH\cdot c$$ with the center $Z(\mathcal H(\varepsilon))=\langle 1,c \rangle$.
The algebra $\mathcal H(-1)$ is classically known as the {\it algebra of biquaternions},
$\mathcal H(1)$ is known as the {\it algebra of split-biquaternions}, and
$\mathcal H(0)$ is  known as the {\it algebra of dual quaternions}.

As we said above, our main result, Theorem \ref{Main-theorem} relies on the
following result, where the signature of a non-degenerate form is written as a pair $(n_+,n_-)$
and the signature of a degenerate form is written as a triple $(n_+,n_-,n_0)$. 
Fix an algebra $\mathcal H$ defined by Formula (\ref{H-definition}). 

\begin{thm}
\label{Algebraic-theorem} Let $(\alpha,\beta,\gamma)$ be any triple of real numbers 
representing $\mathcal H$, $\mathcal H \cong \mathcal H_{\alpha,\beta,\gamma}$.   
The signature of the form $Q=Q_{\alpha,\beta,\gamma}$ 
does not depend on the choice of such $(\alpha,\beta,\gamma)$ and 
is thus an isomorphism invariant of the algebra  $\mathcal H$.
All possible signatures of such forms $Q$ are the nondegenerate cases $(0,3)$, $(2,1)$, $(1,2)$ and the degenerate cases
$(0,2,1)$, $(1,1,1)$, $(0,1,2)$.

The algebra $\mathcal H$ contains a subalgebra of quaternions
$\HH$ precisely when the signature of $Q$ is $(0,3)$, $(0,2,1)$ or $(1,2)$.

The center $Z(\mathcal H)$ has dimension 3 in the case, when $rank\,Q = 1$, that is, 
$Q$ has signature $(0,1,2)$, and this condition determines $\mathcal H$ uniquely, up to isomorphism.
 In all other cases the center has dimension 2.

\medskip
{\bf The case of signature $(0,3)$ (necessarily $\det\, Q< 0$):}
in this case $\mathcal H\cong \mathcal H(1)$. 

 The algebra $\mathcal H$ contains exactly two nontrivial two-sided 
ideals $\HH(1+c), \HH(1-c)$. 

The regular representation $\rho_{reg}$ decomposes as $\rho_1\oplus\rho_2$, where the non-faithful representations $\rho_1,\rho_2$ 
are the only, up to isomorphism, irreducible (non-faithful) 4-representations of $\mathcal H$, $\rho_1,\rho_2$ are given by restrictions
of the regular representations
$\rho_{1}(h)=\rho_{reg}(h)|_{\HH(1+c)},\rho_2(h)=\rho_{reg}(h)|_{\HH(1-c)}$ for all $h \in \mathcal H$, and are isomorphic to the regular representation $\rho_{\HH}$ precomposed, respectively,  with   
 the quotient maps  $\mathcal H\rightarrow \mathcal H/\HH(1-c)\cong\HH$ and $ 
\mathcal H\rightarrow \mathcal H/\HH(1+c)\cong \HH$.

\medskip

{\bf The case of signature $(0,2,1)$
(necessarily $\det\, Q=0$):} in this case $\mathcal H\cong \mathcal H(0)$. 

The algebra $\mathcal H$ contains exactly one nontrivial two-sided 
ideal $\HH c$.  
The 8-representation $\rho_8=\rho_{reg}$ is reducible but not completely reducible. There
are no faithful 4-representations of $\mathcal H$ and there exists exactly one, up to isomorphism, non-faithful
4-representation $\rho_4$, $\rho_4(h)=\rho_{reg}(h)|_{\HH c}, h \in \mathcal H$, isomorphic to the regular
representation $\rho_{\HH}$ precomposed with the quotient map 
$\mathcal H\rightarrow \mathcal H/\HH c \cong \HH$.

\medskip

{\bf The case of signature $(1,2)$ (necessarily $\det\, Q> 0$):}
 in this case $\mathcal H\cong \mathcal H(-1)$. 

 The algebra $\mathcal H$ 
 has no nontrivial two-sided ideals.
All proper left ideals are subspaces of dimension 4 in $\mathcal H$, that are of the form $\HH(w+c), w\in S^2\subset\HH, w^2=-1$.


The regular representation $\rho_{reg}$ decomposes as $\rho_0\oplus \rho_0$, where 
$\rho_0$ is the unique, up to isomorphism, irreducible (faithful) 4-representation of $\mathcal H$,
arising from any proper left ideal $\HH(w+c)$, $\rho_0(h)=\rho_{reg}(h)|_{\HH(w+c)}, h\in \mathcal H$. 
\end{thm}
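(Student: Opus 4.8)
The plan is to recognize $\mathcal H$ as a Clifford algebra and reduce every assertion to the classical structure theory of low-dimensional real Clifford algebras. The defining relations (\ref{H-definition}) are exactly the Clifford relations $e_ie_j+e_je_i=2Q_{ij}$ for the symmetric bilinear form $Q$ on $W=\langle e_1,e_2,e_3\rangle_\RR\cong\RR^3$, and $e_i^2=Q_{ii}\cdot 1=-1$; since $\dim_\RR\mathcal H=8=2^3$ while $\mathrm{Cl}(\RR^3,Q)$ always has dimension $2^3$ (even for degenerate $Q$), the canonical surjection $\mathrm{Cl}(\RR^3,Q)\twoheadrightarrow\mathcal H$ coming from the universal property is an isomorphism. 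First I would record the signature constraint: every diagonal entry of $Q$ is $-1$, so $Q$ is negative on each line $\langle e_i\rangle$ and hence $n_-\geqslant1$; the only signatures of a symmetric form on $\RR^3$ with $n_-\geqslant1$ are precisely the six listed, $(0,3),(1,2),(2,1)$ and $(0,2,1),(1,1,1),(0,1,2)$, and each is realized by a suitable triple (e.g.\ $(0,0,0)$ gives $(0,3)$ and $(\tfrac32,\tfrac32,-\tfrac32)$ gives $\det Q=-1$ with $n_+\geqslant1$, i.e.\ $(2,1)$). Because $\det Q=\lambda_1\lambda_2\lambda_3$ and $n_-\geqslant1$, the identity $\det Q=\alpha^2+\beta^2+\gamma^2+2\alpha\beta\gamma-1$ detects degeneracy and singles out $(1,2)$ as the unique nondegenerate case with $\det Q>0$, matching the parenthetical remarks in the statement.

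The subtle point is the invariance of the signature, which I would establish through the isomorphism type rather than through $W$ (the subspace $W$ is genuinely non-canonical across standard generating sets). The key is that signature determines the algebra: by Sylvester's law, two representing triples of equal signature give congruent forms, hence isomorphic Clifford algebras, so there is a well-defined map $\Phi$ from the six signatures to isomorphism classes, with $[\mathcal H]=\Phi(\mathrm{sig}(Q_t))$ for every triple $t$ representing $\mathcal H$. Diagonalizing a representative identifies the images: the nondegenerate cases give $\mathrm{Cl}_{0,3}\cong\HH\oplus\HH$, $\mathrm{Cl}_{1,2}\cong M_2(\CC)$, $\mathrm{Cl}_{2,1}\cong M_2(\RR)\oplus M_2(\RR)$; for a degenerate $Q$ with null generator $f_3$ and anisotropic $f_1,f_2$, one checks that $c:=f_3f_1f_2$ commutes with $f_1,f_2$ and satisfies $c^2=f_3^2(f_1f_2)^2=0$, whence $\mathcal H\cong\HH\oplus\HH c$ when $(f_1f_2)^2=-1$ (signature $(0,2,1)$) and $\mathcal H\cong M_2(\RR)\oplus M_2(\RR)c$ when $(f_1f_2)^2=+1$ (signature $(1,1,1)$), while the rank-one signature $(0,1,2)$ has two independent null directions and a three-dimensional nilpotent center. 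These six algebras are pairwise non-isomorphic, separated by intrinsic invariants (semisimple versus not, the dimension of the radical, the Wedderburn factors of $\mathcal H/\mathrm{rad}\,\mathcal H$, and $\dim Z$), so $\Phi$ is injective; therefore any two triples representing the same $\mathcal H$ have the same signature. The same computation shows $c$ furnishes the presentation of $\mathcal H(\varepsilon)$, giving $\mathcal H\cong\mathcal H(1),\mathcal H(-1),\mathcal H(0)$ in signatures $(0,3),(1,2),(0,2,1)$.

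The quaternionic criterion and the center dimension then follow from these models. A copy of $\HH$, being $4$-dimensional and simple, meets the nilpotent radical trivially, so $\HH\hookrightarrow\mathcal H$ forces $\HH\hookrightarrow\mathcal H/\mathrm{rad}\,\mathcal H$; a division algebra embeds into a semisimple real algebra only through a single simple factor, and $\HH$ embeds into neither $M_2(\RR)$ nor $\CC$, ruling out the quotients $M_2(\RR)\oplus M_2(\RR)$, $M_2(\RR)$, $\CC$ of the signatures $(2,1),(1,1,1),(0,1,2)$. In the other three cases $\HH$ is visibly present: as a factor of $\HH\oplus\HH$, as $\HH\cdot1$ in the dual quaternions, and via $\HH\hookrightarrow\HH\otimes_\RR\CC=M_2(\CC)$. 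For the center, $Z=\langle 1,c\rangle$ has dimension $2$ in every case except $(0,1,2)$, where the two null directions produce a third central element ($f_2f_3$ together with the pseudoscalar) and $\dim Z=3$; since signature determines the algebra through $\Phi$, this rank-one case is the unique one with $\dim Z=3$.

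Finally I would read off the ideal and representation data from the explicit models. For $\HH\oplus\HH$ the only nontrivial two-sided ideals are the two simple factors $\HH(1\pm c)$, and $\rho_{reg}$ splits into the two $4$-dimensional projections, each the regular representation of $\HH$ composed with $\mathcal H\to\mathcal H/\HH(1\mp c)\cong\HH$; these are the only irreducible $4$-representations. For the dual quaternions the unique nontrivial two-sided ideal is the radical $\HH c$ (an ideal surjecting onto the quotient $\HH$ contains a unit $1+(\ )c$, hence is all of $\mathcal H$), $\rho_{reg}$ is reducible but not completely reducible since the radical is not a module summand, and any $4$-representation makes $\RR^4\cong\HH$ over the quaternion subalgebra, forcing $c$ to act as right multiplication by a square-zero quaternion, i.e.\ by $0$, so it is non-faithful. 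For $M_2(\CC)$, simplicity gives no nontrivial two-sided ideals, the proper left ideals are the $\HH(w+c)$ with $w^2=-1$ (one verifies $w(w+c)=-1+wc\in\HH(w+c)$), which are the $4$-dimensional column spaces $\cong\CC^2$, and $\rho_{reg}$ is two copies of the unique faithful irreducible $\rho_0$. The main obstacle is exactly the signature-invariance step, since $W$ is not canonical; the device of proving instead that $\mathrm{sig}\mapsto[\mathcal H]$ is injective is what makes the invariant well defined, and the only extra care needed is the central element $c=f_3f_1f_2$ that governs the degenerate Clifford algebras.
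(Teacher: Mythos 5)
Your proposal is correct, but it reaches the theorem by a genuinely different route than the paper. You identify $\mathcal H_{\alpha,\beta,\gamma}$ with the Clifford algebra $\mathrm{Cl}(\RR^3,Q_{\alpha,\beta,\gamma})$ and handle the crux --- that the signature of $Q$ is independent of the representing triple even though the subspace $\langle e_1,e_2,e_3\rangle$ is not canonical --- by showing that the assignment $\mathrm{sig}(Q)\mapsto[\mathcal H]$ is well defined (Sylvester plus functoriality of $\mathrm{Cl}$ under isometries, valid also for degenerate forms) and injective (the six algebras $\HH\oplus\HH$, $M_2(\CC)$, $M_2(\RR)\oplus M_2(\RR)$ and the three degenerate ones are separated by the radical dimension, the Wedderburn factors of the semisimple quotient, and $\dim Z$), so that equal algebras force equal signatures. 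The paper instead builds the canonical trace form $q(u,v)=\frac{1}{8}Tr(\rho_{reg}(uv))$ on all of $\mathcal H$, whose signature is an isomorphism invariant by construction, and proves in Theorem \ref{Signature-theorem} --- using the orthogonal decomposition $\mathcal H=\RR\cdot 1\oplus V\oplus\widetilde V\oplus\RR\cdot c$ of Proposition \ref{Prop-properties-of-H} and the identity that $\widetilde Q$ is minus the adjugate of $Q$ --- that the six possible signatures of $Q$ and of $q$ determine one another; embeddings $\HH\hookrightarrow\mathcal H$ are then produced by explicit anticommuting imaginary units (inside $V$ in case $(0,3)$, inside the plane $P\subset\widetilde V$ in case $(1,2)$) and excluded by the obstruction that $q|_\HH$ would have to have signature $(1,3)$, whereas you exclude them through the semisimple quotient ($\HH$ meets the radical trivially and embeds in neither $M_2(\RR)$ nor $\CC$) and produce them from the matrix models. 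Your route buys brevity and conceptual transparency: the ideal structure, the center, and all three representation cases of Proposition \ref{prop-regular} are read off the classical models, and your closure computation $c(w+c)=w(w+c)$ for the left ideals $\HH(w+c)$ is exactly the paper's. The paper's route buys the form $q$ itself, which is reused throughout (it underlies $Q$, $\widetilde Q$, the polar-triangle remark and the pseudometric invariants) and keeps every statement phrased in a standard generating set, which is what Theorems \ref{Main-theorem} and \ref{Theorem-relation} later require. Two small points to tighten: the constraint $n_-\geqslant 1$ forced by $e_i^2=-1$ only bounds the list of six signatures, and since congruence does not preserve the normalization $-1$ on the diagonal you should exhibit an admissible triple realizing each of the six (your examples cover only some); and in the case $(1,2)$ you should note, as the paper does, that every proper left ideal is a free rank-one $\HH$-submodule whose generator normalizes to $w+c$ with $w^2=-1$, so that your sphere of ideals $\HH(w+c)$ is exhaustive rather than merely a verified family.
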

\begin{rem}
\label{Noncanonical-remark}
 If $e_1,e_2,e_3$ is any standard set of generators corresponding to 
$(\alpha,\beta,\gamma)$, then
the central element $c$ in the formulation of Theorem \ref{Algebraic-theorem}
is proportional to the element $\beta e_1-\gamma e_2+\alpha e_3-e_1e_2e_3$, 
see Proposition \ref{Prop-properties-of-H}. The normalized such $c$, that is,
satisfying $c^2=-1,0$ or $1$ is determined up to a scalar multiple, so the 
definitions of the two-sided ideals
of $\mathcal H\cong \mathcal H(1)$ in Theorem \ref{Algebraic-theorem} and hence the representations $\rho_1,\rho_2$ depend on the (non-canonical!) choice of $c$. 
\end{rem}

Let $$\mathcal D=\{(\alpha,\beta,\gamma) \in \RR^3\,|\, Q_{\alpha,\beta,\gamma}\mbox{ has signature } (0,3),(1,2)
\mbox{ or }(0,2,1)\}.$$

From now we restrict ourselves to  triangles $\triangle J_1J_2J_3$ with
$T(\triangle J_1J_2J_3) \in \mathcal D$, this, by Theorem \ref{Algebraic-theorem},  is the (above mentioned) proper class of twistor triangles,
whose corresponding algebras $\mathcal H_{T(\triangle J_1 J_2 J_3)}$ contain the algebra of quaternions $\HH$. By Theorem \ref{Algebraic-theorem}, up to isomorphism, there are just three such algebras.

The triples of $\alpha,\beta,\gamma$ with $|\alpha|,|\beta|,|\gamma|<1$ corresponding to 
 compact twistor triangles, that is, with compact sides, form a proper subset in $\mathcal D$. 


\begin{defi}
\label{geometric-definition}
The triangle $\triangle J_1J_2J_3$ is called {\it hyperbolic}, if $\det Q_{T(\triangle J_1J_2J_3)}>0$, {\it spherical}, if $\det Q_{T(\triangle J_1J_2J_3)}<0$,
and {\it cylindrical}, if  $\det Q_{T(\triangle J_1J_2J_3)}=0$. The algebra 
$\mathcal H=\mathcal H_{T(\triangle J_1 J_2 J_3)}$ is called {\it hyperbolic} 
($\mathcal H\cong \mathcal H (-1)$), {\it spherical} ($\mathcal H\cong \mathcal H (1)$), or {\it cylindrical} 
($\mathcal H\cong \mathcal H (0)$), 
if the triangle $\triangle J_1J_2J_3$ is such. 
\end{defi}

The part of the latter definition involving the algebra $\mathcal H_{\alpha,\beta,\gamma}$ is correct
because Theorem \ref{Algebraic-theorem} provides that the signature of
the form $Q_{\alpha,\beta,\gamma}$ does not depend on the choice of the 
representing triple $(\alpha,\beta,\gamma)$ 
and the sign of $\det\, Q_{\alpha,\beta,\gamma}$ 
uniquely identifies the signature of $Q_{\alpha,\beta,\gamma}$ when
$(\alpha,\beta,\gamma) \in \mathcal D$. 
\begin{rem}
Let $\triangle J_1 J_2 J_3$ be a nondegenerate generalized  twistor triangle,  with $T(\triangle J_1 J_2 J_3)=(\alpha,\beta,\gamma) \in \mathcal D$. 
For the  algebra $\mathcal H_{\alpha,\beta,\gamma}\cong \mathcal H(J_1,J_2,J_3) \subset End\,V_\RR$ the subset
$\langle J_1,J_2,J_3\rangle \cap Compl=\{aJ_1+bJ_2+cJ_3|(aJ_1+bJ_2+cJ_3)^2=-Id, a,b,c\in \RR\}$ in the cases $\det\,Q_{\alpha,\beta,\gamma}>0, \det\,Q_{\alpha,\beta,\gamma}<0,\det\,Q_{\alpha,\beta,\gamma}=0$, is, respectively, a one-sheeted hyperboloid, a sphere, a cylinder, that contains the 
``geodesic'' segments $\langle J_k,J_l\rangle \cap S(J_k,J_l), 1\leqslant k<l\leqslant 3$, of the 
respective twistor lines, 
joining the vertices $J_1,J_2,J_3$ of our twistor triangle.
 This explains the geometric terminology introduced in  Definition \ref{geometric-definition}.
\end{rem}

\begin{rem}
For a triangle $\triangle J_1J_2J_3$ to be of spherical type means that 
$(\alpha,\beta,\gamma)=T(\triangle J_1 J_2 J_3)$ 
is a triple of minus cosines of lengths of sides of a (geodesic) triangle on a unit 2-sphere.
Note that here we compare triangles using only sides lengths, not saying anything about
comparing their angles. In fact, due to our form $(\cdot,\cdot)$ being indefinite,
it is not always possible to define (in a geometrically meaningful way) the angle 
between the sides of $\triangle J_1J_2J_3$. The exceptional situation, when 
the angles of a spherical twistor triangle are defined and equal, up to taking complements to $\pi$, 
 to the corresponding angles of the 
respective geodesic triangle on a sphere, 
is discussed in Theorem \ref{Theorem-relation}. 
\end{rem}


Let $e_1,e_2,e_3$ be any standard set of generators of $\mathcal H_{\alpha,\beta,\gamma}$
 corresponding to $(\alpha,\beta,\gamma)$ and $c\in \RR(\beta e_1-\gamma e_2+\alpha e_3-e_1e_2e_3) \subset Z(\mathcal H_{\alpha,\beta,\gamma})$ (where, as we know, $c^2\in \RR$) be normalized as in Theorem \ref{Algebraic-theorem}, so that $c^2=-1,0$ or $1$.  We are further using the notations for the irreducible
representations of $\mathcal H_{\alpha,\beta,\gamma}$ introduced in Theorem \ref{Algebraic-theorem}. 


Let us now formulate our result about twistor triangles, in terms of the representation theory of the 
respective algebras $\mathcal H$.

\begin{thm}
\label{Main-theorem}

{\bf Existence.} For every triple $ (\alpha,\beta,\gamma) \in \mathcal{D}$
there exists a (possibly non-faithful) 
representation $\rho\colon \mathcal H=\mathcal H_{\alpha,\beta,\gamma} \rightarrow End\, V_\RR$, $\dim_\RR V_\RR=4n$. 
Moreover, 
if one of the two additional
conditions holds$\colon $ 
 
a) $\mathcal H$ 
is hyperbolic;

or

b) $\mathcal H$ 
 is either spherical or cylindrical 
 and $n>1$;


then there exists a faithful such  $\rho$.
 If $\mathcal H$ 
 is either spherical or cylindrical,
and $n=1$, 
then only a non-faithful $\rho$ exists, 
whose image is a subalgebra of quaternions $\HH \subset End\, V_\RR$. 

 {\bf The number of non-$G$-equivalent representations.}
1) For a hyperbolic algebra $\mathcal H$ there is a unique, up to $G$-equivalence, representation 
$\rho \colon \mathcal H \rightarrow End\, V_\RR$, $\rho=n\rho_0$, which is faithful;

2) For a spherical algebra $\mathcal H$ there are total of $n+1$ classes of $G$-equivalent representations  
$\rho \colon \mathcal H \rightarrow End\,V_\RR, \rho=k\rho_1\oplus (n-k)\rho_2$
(among which there are precisely two non-faithful ones, they correspond to $k=0$ and $k=n$, mapping $\mathcal H$ to $\HH\subset End\,V_\RR$). 
Here $k$ is uniquely identified as $k=\frac{1}{8}(Tr(\rho(c))+4n)$.

3) For a cylindrical algebra $\mathcal H$ there are total of $\lfloor \frac{n}{2}\rfloor +1$ 
non-$G$-equivalent representations $\rho=(n-2k)\rho_4\oplus k\rho_8$ (including the only 
non-faithful one, 
corresponding to $k=0$). Here $k$ is uniquely identified as $k=\frac{1}{4}rk\,\rho(c)$. 
\end{thm}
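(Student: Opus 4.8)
The plan is to reduce the classification of representations $\rho\colon \mathcal H\to End\,V_\RR$ up to $G$-equivalence to a question in the module theory of $\mathcal H$. Since $G=GL(V_\RR)$ acts by conjugation, two representations are $G$-equivalent exactly when they are isomorphic as $\mathcal H$-modules, so counting $G$-equivalence classes of $4n$-dimensional representations is the same as counting isomorphism classes of $\mathcal H$-modules of real dimension $4n$. Because each generator satisfies $e_i^2=-1$, every such module automatically produces complex structures $J_i=\rho(e_i)$ obeying the triangle relations, so this module count is exactly the count of twistor triangles with the prescribed $T$-invariant up to $G$-action. Throughout I use the three isomorphism types of $\mathcal H$ and their irreducibles $\rho_0,\rho_1,\rho_2,\rho_4,\rho_8$ supplied by Theorem \ref{Algebraic-theorem}.

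For existence I would build modules as direct sums of these irreducibles, whose real dimensions ($4$, and $8$ for $\rho_8$) divide $4n$; in particular $n$ copies of any $4$-dimensional irreducible give the correct dimension. Faithfulness is then governed by which irreducibles occur: in the hyperbolic case $\rho_0$ is itself faithful, so $n\rho_0$ works for every $n\geqslant 1$; in the spherical case faithfulness needs both $\rho_1$ and $\rho_2$ (each annihilates one of the two-sided ideals $\HH(1\pm c)$), forcing $n\geqslant 2$; in the cylindrical case it needs the $8$-dimensional $\rho_8$ (since $\rho_4$ annihilates $\HH c$), which fits only for $n\geqslant 2$. When $n=1$ in the spherical or cylindrical case the sole option is a single $4$-dimensional irreducible with image $\HH$, giving exactly the stated non-faithful representation.

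The counts in the hyperbolic and spherical cases follow from semisimplicity. Hyperbolic $\mathcal H\cong\mathcal H(-1)\cong\HH\otimes_\RR\CC\cong M_2(\CC)$ is simple with the single irreducible $\rho_0$, so every module is $n\rho_0$ and there is one class. Spherical $\mathcal H\cong\mathcal H(1)$ decomposes through the central idempotents $(1\pm c)/2$ into $\HH\times\HH$, so a module is $k\rho_1\oplus(n-k)\rho_2$ for a unique $k\in\{0,\ldots,n\}$, giving $n+1$ classes; as $c$ acts by $+1$ on $\rho_1$ and by $-1$ on $\rho_2$, one has $Tr\,\rho(c)=8k-4n$, which inverts to $k=\tfrac18(Tr\,\rho(c)+4n)$.

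The cylindrical case is the main obstacle, since $\mathcal H\cong\mathcal H(0)$ has the nilpotent radical $\HH c$ with $\mathcal H/\HH c\cong\HH$ and is not semisimple, so complete reducibility is unavailable and I must classify modules by hand. Restricting the $\mathcal H$-action to $\HH$ makes $V$ a free left $\HH$-module, $V\cong\HH^{n}$, and the central $c$ acts as an $\HH$-linear $N$ with $N^2=0$; a module isomorphism is precisely an $\HH$-linear conjugacy of such $N$. Because $\HH$ is a division ring, the usual Jordan argument (pick a complement $W$ to $\ker N$, note $N$ maps $W$ isomorphically onto $\mathrm{im}\,N\subseteq\ker N$, then extend to a basis of $\ker N$) shows $N$ is conjugate to $r$ Jordan blocks of size $2$ and $n-2r$ of size $1$, where $r=rk_\HH N\in\{0,\ldots,\lfloor n/2\rfloor\}$. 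The size-$2$ block is $\rho_8$ and the size-$1$ block is $\rho_4$, so $\rho\cong(n-2k)\rho_4\oplus k\rho_8$ with $k=r$, yielding $\lfloor n/2\rfloor+1$ classes; since each $\rho_8$ contributes real rank $4$ to $\rho(c)$ and each $\rho_4$ contributes $0$, we get $rk\,\rho(c)=4k$, i.e.\ $k=\tfrac14 rk\,\rho(c)$.
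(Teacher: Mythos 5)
Your proposal is correct and follows essentially the same route as the paper: decompose an arbitrary $4n$-dimensional representation into the irreducibles $\rho_0$, $\rho_1,\rho_2$, $\rho_4,\rho_8$ furnished by Theorem \ref{Algebraic-theorem}, recover the multiplicity $k$ from $Tr(\rho(c))$ in the spherical case and from $rk\,\rho(c)$ in the cylindrical case, and note that your Jordan-block analysis of the square-zero $\HH$-linear operator $\rho(c)$ on the free module $\HH^n$ is exactly the argument of the paper's Proposition \ref{Prop-special-representation}. The only cosmetic difference is that you justify complete reducibility for $\mathcal H(\pm 1)$ via the Wedderburn structure ($M_2(\CC)$ and $\HH\times\HH$), whereas the paper argues directly with cyclic $\HH$-modules and the explicit left ideals of Proposition \ref{prop-regular}.
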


 The abstract representation theory of algebras $\mathcal H(\varepsilon)$ is elementary
and must be a folklore, nevertheless the nontrivial point of Theorem \ref{Main-theorem} is that the theorem explains this representation theory
with respect to a standard set of generators, that is, with respect to a triangle, from which
our $\mathcal H$ originates.

\begin{defi}
We say that a representation $\rho\colon \mathcal H \rightarrow End\,V_\RR$
of an algebra $\mathcal H$ of spherical type is {\it balanced} if $\rho=k\rho_1\oplus k\rho_2$,
that is, the multiplicities of both $\rho_1$ and $\rho_2$ are equal.
\end{defi}
Theorem \ref{Main-theorem} tells us that a balanced representation of a spherical algebra  
$\mathcal H$ 
exists if and only if $n=\frac{1}{4}dim\,V_\RR$ is an even number.

Let us introduce additional subgroups of the group $G$. Let $S$ be a compact twistor line 
(that is, a 2-sphere), $\HH \subset End\,V_\RR$ be the algebra of quaternions associated to $S$, 
and $I\in S$ be a period.  
 We set $G_{I,S}\subset G_I$ to be the $G_I$-adjoint action stabilizer of $S$ as a set. We note that,
as $G_{I,S}$ is the subgroup of elements of $G_I$, acting as rotations of $S$ about
the ``axis" $\{I,-I\}\subset S$, we have 
 $G_{I,S}\cong 
\langle \exp(tI)h, t\in \RR,h\in G_\HH \rangle \cong SO(2)\times G_\HH
\subset G_S$. 
Then if $I_1,I_2,I_3$ are linearly independent complex structures in $S$, we have
that  $G_S$ is generated by its subgroups $G_{I_j,S},j=1,2,3$, 
and so we have an isomorphism $G_S\cong SO(3)\times G_\HH$.
Fix such $I_1,I_2,I_3\in S$. 
Set $(\alpha,\beta,\gamma)=T(\triangle I_1 I_2 I_3)$. 
 Theorem \ref{Theorem-relation} below states that $\mathcal H_{\alpha,\beta,\gamma}$
is spherical. 

For such $\mathcal H_{\alpha,\beta,\gamma}$ 
Theorem \ref{Main-theorem} allows us to
 choose  representatives $\rho_k\colon \mathcal H_{\alpha,\beta,\gamma} \rightarrow
End\,V_\RR,k=0,\dots,n$, of $n+1$ $G$-equivalence classes of representations of $\mathcal H_{\alpha,\beta,\gamma} \rightarrow
End\,V_\RR$ (not to be confused with the above introduced {\it irreducible} representations!), such that $\rho_k(e_1)=I_1,\rho_k(e_2)=I_2, Tr(\rho_k(c))=4(2k-n), k=0,\dots,n$ 
(again, $\rho_k$ are defined non-canonically, as follows from Remark \ref{Noncanonical-remark}). 
The values $k=0$ and $n$ correspond to the two non-faithful representations, 
$\rho_0(\mathcal H_{\alpha,\beta,\gamma})=\rho_n(\mathcal H_{\alpha,\beta,\gamma})=\HH\subset End\,V_\RR$.  For a representation $\rho\colon \mathcal H_{\alpha,\beta,\gamma} \rightarrow
End\,V_\RR, \rho(e_1)=I_1,\rho(e_2)=I_2,$  we denote by $G_{\HH,\rho}$ the $G_\HH$-action stabilizer of  $\rho$. We set $G_\HH^l$ to be the $l$-fold Cartesian product of $G_\HH$. 
We introduce the following action of $G_\HH^4$ on $m^{-1}(G_\HH)$,
$$(h_1,h_2,h_3,h_4)\cdot (f_1,f_2,f_3)= (h_1f_1h_2^{-1},h_2f_2h_3^{-1},h_3f_3h_4^{-1}).$$ 
Finally we formulate the following answer to our original question about 
the fiber $ m^{-1}(G_\HH)$. 

\begin{thm}
\label{Theorem-relation}
Given three linearly independent complex structures $I_1,I_2,I_3$
in a compact twistor line $S$  with $(\alpha,\beta,\gamma)=T(\triangle I_1 I_2 I_3)$,
the algebra $\mathcal H_{\alpha,\beta,\gamma}$ is spherical. 
The fiber $m^{-1}(G_\HH)$ consists of $n+1$ connected components that are in 
one-to-one correspondence with $G$-equivalence classes of representations $\rho_k\colon
\mathcal H_{\alpha,\beta,\gamma} \rightarrow End\,V_\RR, Tr(\rho_k(c))=4(2k-n),  0\leqslant k \leqslant n$. 

{\bf General components.} 
Each component is a subset of the form $$\{(f_1,f_2,f_3)\in m^{-1}(G_\HH)\,|\, f_2(I_3)\in G_\HH\cdot \rho_k(e_3)\},$$
each such set is an orbit under the action of $G_\HH^4$, 
the orbit is diffeomorphic 
to $G_\HH^4/Stab_{G_\HH^4}(f_1,f_2,f_3)$, where the stabilizer $Stab_{G_\HH^4}(f_1,f_2,f_3)\cong 
G_{\HH,\rho_k}$,
$\dim\, G_\HH^4/Stab_{G_\HH^4}(f_1,f_2,f_3)=12n^2+8nk-8k^2$. 

{\bf The trivial and $SO(3)$-type components.}
For $k=0,n$ we have $G_{\HH,\rho_k}=G_\HH$ 
and the respective orbits are 
 diffeomorphic to $G_\HH^3$. The two orbits are, non-canonically in $k$, the subsets
 $$G_\HH \times G_\HH \times G_\HH \subset 
G_{I_1}\times G_{I_2}\times G_{I_3}$$
(the trivial component)
and 
$$g_1G_\HH \times g_2G_\HH \times g_3G_\HH \subset G_{I_1}\times G_{I_2}\times G_{I_3},$$
where  $g_{j}\in G_{I_j,S}\setminus G_\HH,j=1,2,3$ are unique, up to $G_\HH$, elements satisfying $g_{1}g_{2}g_{3}\in G_\HH$ (the  $SO(3)$-type component).

{\bf The geometry of triangles.} For every $(g_1,g_2,g_3) \in m^{-1}(G_\HH) \cap (G_{I_1}\setminus G_{I_1,S}) \times (G_{I_2}\setminus G_{I_2,S}) \times (G_{I_3}\setminus G_{I_3,S})$  the (compact) twistor triangle $\triangle I_1I_2J_3, J_3=g_{2}(I_3)$, in $Compl$
is nondegenerate and spherical, with $T(\triangle I_1 I_2 J_3)=T(\triangle I_1 I_2 I_3)$, 
so that $\mathcal H_{T(\triangle I_1 I_2 I_3)}=\mathcal H_{T(\triangle I_1 I_2 J_3)} \cong \mathcal H(I_1,I_2,J_3)$. 
If, in addition, the natural representation $\rho\colon \mathcal H(I_1,I_2,J_3) \rightarrow End\,V_\RR$
is balanced, the angles of $\triangle I_1I_2J_3$ are well defined, as 
the angles between the tangent subspaces to the twistor spheres at the vertices,
and they are equal, up to taking complements to $\pi$, to the respective angles of $\triangle I_1I_2I_3$.

\end{thm}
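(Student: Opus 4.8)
The plan is to set up an explicit dictionary between triples $(g_1,g_2,g_3)\in m^{-1}(G_\HH)$ and representations of $\mathcal H_{\alpha,\beta,\gamma}$, and then to translate every assertion of the theorem into the representation-theoretic classification of Theorem \ref{Main-theorem}. To a triple with $g_j\in G_{I_j}$ and $g_1g_2g_3\in G_\HH$ I associate $J_3:=g_2(I_3)$ and the ordered triple $(I_1,I_2,J_3)$. First I would check that this is a standard set of generators representing the \emph{same} $(\alpha,\beta,\gamma)$: since $g_2\in G_{I_2}$ one has $\frac{1}{4n}Tr(I_2J_3)=\frac{1}{4n}Tr(I_2I_3)=\beta$, and since $g_1\in G_{I_1}$ and $g_1g_2g_3\in G_\HH\subset G_{I_1}$ one gets $g_2g_3\in G_{I_1}$, whence $\frac{1}{4n}Tr(J_3I_1)=\frac{1}{4n}Tr((g_2g_3)I_3(g_2g_3)^{-1}I_1)=\frac{1}{4n}Tr(I_3I_1)=\gamma$. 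Thus $e_1\mapsto I_1,e_2\mapsto I_2,e_3\mapsto J_3$ defines a representation $\rho$ of $\mathcal H_{\alpha,\beta,\gamma}$. Because $I_1,I_2,I_3\in S$ span the positive-definite space of imaginary quaternions, the Gram matrix $-Q_{\alpha,\beta,\gamma}$ of $(\cdot,\cdot)$ on $\langle I_1,I_2,I_3\rangle$ is positive-definite, so $\det Q<0$ and $\mathcal H$ is spherical by Theorem \ref{Algebraic-theorem}. By Theorem \ref{Main-theorem} the $G$-class of $\rho$ is detected by $k=\frac{1}{8}(Tr(\rho(c))+4n)\in\{0,\dots,n\}$, and this is the label of the component containing $(g_1,g_2,g_3)$.

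Next I would analyze the $G_\HH^4$-action. A direct check shows it preserves $m^{-1}(G_\HH)$ and each factor $G_{I_j}$, and that it sends $J_3=f_2(I_3)$ to $h_2(J_3)$, i.e. it alters $\rho$ only by conjugation by $h_2\in G_\HH$; hence $k$ is constant on orbits. Since $k$ is integer-valued and continuous it is locally constant, and $G_\HH\cong GL_n(\HH)$ (hence $G_\HH^4$) is connected, so the orbits have constant $k$. The heart of the matter is transitivity inside a fixed $k$: given two triples $(f_1,f_2,f_3),(f_1',f_2',f_3')$ with the same $k$, the associated $\rho,\rho'$ agree on $e_1,e_2$, so any $g$ with $\rho'=g\cdot\rho$ lies in $G_{I_1}\cap G_{I_2}=G_\HH$ and the two representations are $G_\HH$-equivalent. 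Choosing such $h_2\in G_\HH$ I set $h_1=f_1'h_2f_1^{-1}$, $h_3=(f_2')^{-1}h_2f_2$, $h_4=(f_3')^{-1}h_3f_3$ and verify one relation at a time that $(h_1,h_2,h_3,h_4)$ carries the first triple to the second. Membership in $G_\HH$ is then forced by Lemma \ref{Two-point-lemma}: $h_3\in G_{I_2}\cap G_{I_3}=G_\HH$; the product relation gives $h_1=P'h_4P^{-1}$ with $P=f_1f_2f_3,P'=f_1'f_2'f_3'\in G_\HH$, and a short computation shows $h_4\in G_{I_1}\cap G_{I_3}=G_\HH$, whence $h_1\in G_\HH$ as well. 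This proves each component is a single $G_\HH^4$-orbit, equal to the preimage $\{f_2(I_3)\in G_\HH\cdot\rho_k(e_3)\}$, and there are $n+1$ of them.

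For the stabilizer I would solve the fixed-point equations $h_jf_jh_{j+1}^{-1}=f_j$; they express $h_1,h_3,h_4$ through $h_2$, the condition $h_3\in G_\HH$ is equivalent to $h_2\in G_{J_3}$, and the identity $f_1(J_3)=I_3$ (which holds because $f_1f_2\in G_{I_3}$) forces $h_1\in G_{I_1}\cap G_{I_3}=G_\HH$ automatically once $h_2\in G_{J_3}$. Hence $Stab_{G_\HH^4}(f_1,f_2,f_3)\cong G_\HH\cap G_{J_3}=G_{\HH,\rho_k}$. Since $\HH$ together with $J_3=\rho_k(e_3)$ generate all of $\mathcal H$, this stabilizer is the invertible part of the commutant of $\rho_k(\mathcal H)$; using the isotypic decomposition $\rho_k=k\rho_1\oplus(n-k)\rho_2$ with $\Hom_{\mathcal H}(\rho_i,\rho_i)\cong\HH$ gives $G_{\HH,\rho_k}\cong GL_k(\HH)\times GL_{n-k}(\HH)$ of real dimension $4k^2+4(n-k)^2$. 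With $\dim G_\HH=4n^2$ this yields $\dim G_\HH^4/Stab=16n^2-4k^2-4(n-k)^2=12n^2+8nk-8k^2$.

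Finally I would treat the boundary cases and the geometry. For $k=0,n$ the representation is non-faithful, so $J_3\in\HH$, i.e. $J_3\in S$; then $G_\HH\subset G_{J_3}$, the stabilizer is all of $G_\HH$, and the orbit is diffeomorphic to $G_\HH^3$. The trivial orbit is that of $(e,e,e)$, where $f_2\in G_\HH$ gives $J_3=I_3$, while the $SO(3)$-type orbit arises from the relation $g_1g_2g_3\in G_\HH$ with $g_j\in G_{I_j,S}\setminus G_\HH$, reflecting the classical fact that in $G_S/G_\HH\cong SO(3)$ there are essentially unique rotations about three prescribed axes composing to the identity; the two cases realize $J_3=I_3$ versus $J_3=g_2(I_3)\neq I_3$, i.e. the two values $k\in\{0,n\}$. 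For $0<k<n$ the representation $\rho$ is faithful, so $e_i\mapsto I_i,J_3$ is injective and $\triangle I_1I_2J_3$ is nondegenerate, and it is spherical with $T(\triangle I_1I_2J_3)=(\alpha,\beta,\gamma)$ by the computation of the first paragraph. I expect the angle statement for balanced $\rho$ to be the main obstacle: one must first show that balancedness ($k=n/2$) forces the pairs of tangent planes to the incident twistor spheres at each vertex to span positive-definite subspaces of $(End\,V_\RR,(\cdot,\cdot))$, so that the dihedral angles are genuinely defined, and then compute the cosines of these angles and match them, via the spherical law of cosines applied to $(\alpha,\beta,\gamma)=(-\cos\sphericalangle I_1I_2,-\cos\sphericalangle I_2I_3,-\cos\sphericalangle I_3I_1)$, to the angles of the spherical triangle $\triangle I_1I_2I_3$ up to complements to $\pi$.
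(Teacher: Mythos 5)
Your core architecture is essentially the paper's: the dictionary $(g_1,g_2,g_3)\mapsto \triangle I_1I_2J_3$ with $J_3=g_2(I_3)$, the trace invariance of $(\alpha,\beta,\gamma)$, the observation that the $G_\HH^4$-action changes the associated representation only by conjugation by the element in the second slot, transitivity within a level set of $k$, and the stabilizer identification $Stab_{G_\HH^4}(f_1,f_2,f_3)\cong G_\HH\cap G_{J_3}\cong GL_k(\HH)\times GL_{n-k}(\HH)$ with the count $16n^2-4k^2-4(n-k)^2=12n^2+8nk-8k^2$. Your verifications of these steps are correct, and two of them are actually slicker than the paper's: you get sphericality from the negative-definiteness of the Gram matrix $-Q_{\alpha,\beta,\gamma}$ of $(\cdot,\cdot)$ on $\langle I_1,I_2,I_3\rangle\subset\langle I,J,K\rangle$ (the paper instead invokes the spherical law of cosines to force $\det Q_{\alpha,\beta,\gamma}<0$), and your purely algebraic transitivity check ($h_1=f_1'h_2f_1^{-1}$, $h_3=(f_2')^{-1}h_2f_2$, $h_4=(f_3')^{-1}h_3f_3$, with memberships in $G_\HH$ forced by Lemma \ref{Two-point-lemma} and the product relation $h_1Ph_4^{-1}=P'$) replaces the paper's rotation bookkeeping with factors $e^{t_jI_j}$ in its analysis of the fiber over an individual triangle.

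There is, however, a genuine gap: you never show that the middle level sets are nonempty. Your argument proves that each level set of $k$ is a single connected $G_\HH^4$-orbit, hence that $m^{-1}(G_\HH)$ has \emph{at most} $n+1$ components; for the claimed one-to-one correspondence you must, for every $0<k<n$, exhibit a triple $(g_1,g_2,g_3)\in m^{-1}(G_\HH)$ with $g_2(I_3)\in G_\HH\cdot\rho_k(e_3)$. Theorem \ref{Main-theorem} gives you a triangle $\triangle I_1I_2J_3$ in each class, but lifting a triangle to a triple is exactly the surjectivity of the paper's map $\tau$, to which it devotes an entire subsection: one first finds $g_1\in G_{I_1}$, $g_2\in G_{I_2}$ with $g_1^{-1}(S)=S(I_1,J_3)$, $g_2(S)=S(I_2,J_3)$, sets $g_3=g_2^{-1}f_3g_2$ so that $g_1g_2g_3(S)=S$, and then corrects the triple by rotations $e^{t_1I_1},e^{t_2I_2},e^{t_3I_3}$ to push the product from $G_S$ down into $G_\HH$ while preserving the target triangle. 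Nothing in your proposal supplies this construction (your explicit existence arguments cover only $k=0,n$). Secondly, the angle statement is announced rather than proved: the paper's proof hinges on the computation $Tr(I_1I_2J_3)=4(l-k)\sqrt{-\det Q_{\alpha,\beta,\gamma}}$, obtained from $\rho=k\rho_1\oplus l\rho_2$ via the central element, after which the evaluation of $q(c_t,d_s)$ on unit vectors in the two tangent planes at a vertex has cross-term proportional to the balancedness defect $m=4(l-k)$; balancedness is precisely what kills that term and yields $\sup|q(c_t,d_s)|=\left|\beta+\alpha\gamma\right|/\bigl(\sqrt{1-\alpha^2}\sqrt{1-\gamma^2}\bigr)<1$, so that the angles are defined and match those of $\triangle I_1I_2I_3$ up to complements to $\pi$. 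You correctly guess the shape of this argument but do not identify this mechanism, so that part of the theorem remains unproven. (A smaller, patchable point: to apply your ``faithful $\Rightarrow$ nondegenerate'' remark under the stated hypothesis $g_j\in G_{I_j}\setminus G_{I_j,S}$ you still need $J_3\notin S$, e.g.\ via $S\cap g_2(S)=\{\pm I_2\}$ from Lemma \ref{Two-point-lemma} together with $|\beta|<1$.)
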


So Theorem \ref{Theorem-relation} tells us that the independence 
of the subgroups $G_{I_1}, G_{I_2}, G_{I_3}$ in terms of the multiplication mapping 
$m\colon G_{I_1}\times G_{I_2}\times G_{I_3}\to G$ in general fails not only at the expected locus
$G_\HH^3$ and at the easy-to-guess locus ``of $SO(3)$-type'' (diffeomorphic to $G_\HH^3$), 
both of which correspond to degenerate triangles, 
but also at (a finite number of) higher dimensional loci in $G_{I_1}\times G_{I_2}\times G_{I_3}$, corresponding
to nondegenerate spherical triangles in $Compl$.

We note here that the problem of the description of the specific fiber
$m^{-1}(G_\HH)$ is  extremely approachable, while it may be difficult, if not impossible at all,
to apply the same methods for describing fibers of the more general type $m^{-1}(g_1g_2g_3G_\HH)$
with $g_j\in G_{I_j}$. 

Now let us sketch the plan of the paper.

In Section \ref{When-section} we prove the part of Theorem \ref{Algebraic-theorem} stating the isomorphism invariance of
the signature  of $Q$, classifying the
possible signatures and specifying the ones that correspond to $\mathcal H$ containing $\HH$.
 Besides that this section contains a summary of algebraic properties of the algebra 
$\mathcal H$, which has a lot of symmetry with respect to a standard set of generators.

In Section \ref{Regular-section} we write down the left regular representation of $\mathcal H$ 
and its irreducible representations, which completes the proof of Theorem \ref{Algebraic-theorem}. 
 Understanding the irreducible representations of $\mathcal H$ allows us to prove
Theorem \ref{Main-theorem}, see Subsection \ref{Main-theorem-subsection}.

Section \ref{Proof of Theorem-relation} 
proves Theorem \ref{Theorem-relation}.

Section \ref{Appendix} contains proofs of some technical statements regarding
the structure of the algebra $\mathcal H$, in particular it describes the center $Z(\mathcal H)$.

\section{When is $\mathcal H_{\alpha,\beta,\gamma}$ a quaternionic algebra?}
\label{When-section}
In this section we prove Theorem \ref{Algebraic-theorem}. 

Let $\mathcal H=\mathcal H_{\alpha,\beta,\gamma}$ be the algebra over $\RR$ given by generators and defining relations $$\langle e_1,e_2,e_3\,|\, 
e^{2}_1=e^2_2=e^2_3=-1, e_1e_2+e_2e_1=2\alpha,
e_2e_3+e_3e_2=2\beta, e_1e_3+e_3e_1=2\gamma\rangle.$$

In the introduction we defined
the bilinear form $q$ on $\mathcal H$, $$q(u,v)=\frac{1}{\dim_\RR \mathcal H}Tr(\rho_{reg}(uv)),
\dim_\RR \mathcal H=8,$$
and, thus, the associated quadratic form $q(v,v)$, which we will also denote by $q$.
Due to the relations of $\mathcal H$, the form $q$ has a lot in common with the vector-valued
quadratic form $Sq$ on $\mathcal H$ that squares the elements of $\mathcal H$, 
$Sq \colon v\mapsto v^2 \in \mathcal H$.


Introduce the subspaces $V=\langle e_1,e_2,e_3\rangle$, $\widetilde{V}=\langle  \beta-e_2e_3, \gamma-e_3e_1, \alpha-e_1e_2 \rangle$ and $c=\beta e_1-\gamma e_2+\alpha e_3-e_1e_2e_3$. 
Set $Q_{\alpha,\beta,\gamma}=q|_V$ and $\widetilde{Q}_{\alpha,\beta,\gamma}=q|_{\widetilde{V}}$ and identify these restrictions with their matrices
in the specified bases of the respective subspaces.

We summarize the properties of algebra $\mathcal H$, in particular, the relation between the form $q$
and the square form $Sq$ on $\mathcal H$ 
 in the following proposition.

\begin{prop}
\label{Prop-properties-of-H}
\medskip

\noindent 1) $Sq|_{V}=q|_{V}$ and $\large Sq|_{\widetilde{V}}=q|_{\widetilde{V}}$;

\medskip

\noindent 2) The matrix $\widetilde{Q}_{\alpha,\beta,\gamma}$ is minus the adjugate of the matrix $Q_{\alpha,\beta,\gamma}$;

\medskip

\noindent 3) We have the $q$-orthogonal decomposition 
$\mathcal H=\RR\cdot 1  \oplus V \oplus \widetilde{V}\oplus \RR\cdot c$;

\medskip

\noindent  4) The element $c=\beta e_1-\gamma e_2+\alpha e_3-e_1e_2e_3$
belongs to  the center of $\mathcal H$.

If $|\alpha|=|\beta|=|\gamma|=1$ and $\gamma=-\alpha\beta$ or, what is the same,  $rk\, Q_{\alpha,\beta,\gamma}=1$ $\iff$ the signature of $Q_{\alpha,\beta,\gamma}$ is $(0,1,2)$, 
the center is 3-dimensional, 
$Z(\mathcal H)=\langle 1,c,z\rangle$ where $z= -\gamma(\alpha-e_1e_2)+(\beta-e_2e_3)-\alpha(\gamma-e_3e_1)$.

If  the signature of $Q_{\alpha,\beta,\gamma}$ is different from $(0,1,2)$,  
 then  the center is 2-dimensional, $Z(\mathcal H)=\langle 1,c \rangle$; 

\medskip

\noindent 5)  $c^2=
-\det Q_{\alpha,\beta,\gamma} \in \RR\cdot 1 \hookrightarrow \mathcal H$;

\medskip
\noindent  6) We have inclusions between subspaces $cV\subset \widetilde{V}$, $c\widetilde{V}\subset V$.
If $\det\,Q_{\alpha,\beta,\gamma}\neq 0$, then these inclusions become equalities and $c$ acts as an involution permuting
these subspaces, and hence also the larger subspaces $\langle 1,V \rangle$, $\langle c, \widetilde{V} \rangle$;

\medskip

\noindent 7) The pairs of elements $\{\alpha-e_1e_2,\gamma-e_3e_1\}, \{\alpha-e_1e_2,\beta-e_2e_3\},
\{\beta-e_2e_3,\gamma-e_3e_1\}$ anticommute, respectively, with $e_1$, $e_2$ and $e_3$;

\medskip
\noindent  8) The form $Sq|_{V\oplus \widetilde{V}}$ has as its range the subspace 
$\langle 1,c \rangle \subset  Z(\mathcal H)$.
%
\end{prop}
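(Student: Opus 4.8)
The plan is to work throughout with the ordered basis $\mathcal B=\{1,e_1,e_2,e_3,e_1e_2,e_2e_3,e_3e_1,e_1e_2e_3\}$ and to extract everything from two features of the defining relations: the $\ZZ/2$-grading by parity (declare each $e_i$ odd; every relation $e_i^2=-1$ and $e_ie_j+e_je_i=2(\cdot)$ is then homogeneous of even degree, so the grading is well defined), under which $\widetilde V\subset\mathcal H_0=\langle 1,e_1e_2,e_2e_3,e_3e_1\rangle$ and $V,\ c\subset\mathcal H_1=\langle e_1,e_2,e_3,e_1e_2e_3\rangle$; and the trace behaviour of $\rho_{reg}$. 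For the latter I would first record the elementary facts $(e_ie_j)^2=2(\cdot)e_ie_j-1$, the cancellations $Tr(\rho_{reg}(e_2e_3))=8\beta$ and its analogues $8\gamma,8\alpha$, and the observation that left multiplication by any odd element interchanges $\mathcal H_0$ and $\mathcal H_1$ and is hence trace-free. Since $q(u,v)=\tfrac18 Tr(\rho_{reg}(uv))$ and $Tr(\rho_{reg}(\lambda\cdot 1))=8\lambda$, this yields at once that $q$ vanishes on $\mathcal H_0\times\mathcal H_1$, that $q(1,f_j)=0$ (the scalar corrections $\beta,\gamma,\alpha$ built into the generators of $\widetilde V$ are exactly what cancels $Tr(\rho_{reg}(e_ie_j))$), and that $q(x,x)=x^2$ whenever $x^2\in\RR\cdot 1$. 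This last remark already gives 1): for $v=xe_1+ye_2+ze_3$ a direct expansion gives $v^2=-x^2-y^2-z^2+2\alpha xy+2\beta yz+2\gamma zx\in\RR\cdot 1$, which is precisely the quadratic form of $Q_{\alpha,\beta,\gamma}$; for $\widetilde V$ one checks $f_i^2\in\RR$ (e.g. $(\alpha-e_1e_2)^2=\alpha^2-1$) and that the anticommutators $f_if_j+f_jf_i$ are scalars, so $w^2\in\RR\cdot 1$ for all $w\in\widetilde V$.

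The structural heart of the argument is to compute how $c$ moves $V$ and $\widetilde V$. A direct expansion gives $ce_1=-f_1+\alpha f_2+\gamma f_3$, and by the analogous computations for $ce_2,ce_3$ the operator $c\colon V\to\widetilde V$ has, in the bases $(e_1,e_2,e_3)$ and $(f_1,f_2,f_3)$, matrix exactly $Q_{\alpha,\beta,\gamma}$; likewise $cf_1=(\beta^2-1)e_1-(\alpha+\beta\gamma)e_2-(\gamma+\alpha\beta)e_3$ shows that $c\colon\widetilde V\to V$ has matrix $-\operatorname{adj}Q_{\alpha,\beta,\gamma}$. This one computation delivers 6) (the inclusions $cV\subset\widetilde V$, $c\widetilde V\subset V$, which become equalities and an involutive swap of $\langle 1,V\rangle$ and $\langle c,\widetilde V\rangle$ once $c$ is invertible), then 5) (composing the two matrices gives $c^2|_V=-(\operatorname{adj}Q)\,Q=-\det Q\cdot Id$, whence $c^2=-\det Q\cdot 1$ after cancelling $e_1$ via $e_1^{-1}=-e_1$), and then 2): because $c$ is central, $q(ce_i,ce_j)=-\det Q\cdot q(e_i,e_j)$, i.e. $Q\,\widetilde Q\,Q=-\det Q\cdot Q$, so $\widetilde Q=-\operatorname{adj}Q$ when $\det Q\neq 0$ and hence for all $(\alpha,\beta,\gamma)$ by continuity (or directly from $f_1f_2+f_2f_1=-2(\alpha+\beta\gamma)$ and its analogues). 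The small lemma 7) underlying all of this is verified by hand, e.g. $e_1(\alpha-e_1e_2)+(\alpha-e_1e_2)e_1=0$. Finally 8) follows because $e_if_j+f_je_i$ vanishes when $e_i$ anticommutes with $f_j$ (part 7) and equals $2c$ in the matching case, e.g. $e_1f_1+f_1e_1=2c$; together with $v^2,w^2\in\RR\cdot 1$ this forces $Sq(v+w)\in\langle 1,c\rangle$, with both directions attained.

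It then remains to assemble 3) and 4). For 3), the change of basis from $\mathcal B$ to $\{1\}\cup\{e_i\}\cup\{f_j\}\cup\{c\}$ is unitriangular with respect to the degree filtration, hence invertible, so $\mathcal H=\RR\cdot 1\oplus V\oplus\widetilde V\oplus\RR\cdot c$ as vector spaces; the pairwise $q$-orthogonality comes from the grading (so $\RR\cdot 1$ and $\widetilde V$ are $q$-orthogonal to $V$ and $\RR\cdot c$), from $q(1,\widetilde V)=0$ proved in the first paragraph, and from $q(V,c)=0$, which holds since $e_ic=ce_i\in\widetilde V$ and $Tr(\rho_{reg}(\widetilde V))=8q(1,\widetilde V)=0$. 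For 4), centrality of $c$ is the direct verification $[c,e_i]=0$ for $i=1,2,3$. The delicate point, and the one I expect to be the main obstacle, is the exact dimension of $Z(\mathcal H)$: one must show that no further central element exists unless $\operatorname{rk}Q_{\alpha,\beta,\gamma}=1$ (equivalently $|\alpha|=|\beta|=|\gamma|=1$ and $\gamma=-\alpha\beta$), in which case $z=-\gamma(\alpha-e_1e_2)+(\beta-e_2e_3)-\alpha(\gamma-e_3e_1)$ becomes central and $\dim Z(\mathcal H)=3$. This amounts to solving the linear system $[x,e_i]=0$ across the full $8$-dimensional algebra and pinning down exactly when the solution space jumps from dimension $2$ to dimension $3$, a case analysis cleanest to record separately; I would therefore prove the generic statement $Z(\mathcal H)=\langle 1,c\rangle$ here and defer the degenerate rank-one computation to the Appendix (Section \ref{Appendix}).
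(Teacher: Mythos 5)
Your proposal is correct, and it reaches the proposition by a genuinely different and in several places slicker route than the paper, which proves essentially everything by brute-force expansion in Section \ref{Appendix}. Three of your devices deserve comparison. First, the $\ZZ/2$-grading: since the defining relations are even, $\mathcal H=\mathcal H_0\oplus\mathcal H_1$ is indeed well graded, and your observation that left multiplication by any odd element is trace-free replaces the paper's longest bookkeeping step --- the term-by-term computation of $\rho_{reg}(e_1e_2e_3)$ on all eight basis monomials in \ref{subsec-triple-trace} --- by a one-line parity argument, while simultaneously yielding all cross-parity orthogonality in part 3; the two remaining relations you supply correctly ($1\perp\widetilde V$ via $Tr(\rho_{reg}(e_2e_3))=8\beta$, which checks out, and $V\perp c$ via $e_ic=ce_i\in\widetilde V$ being trace-free, which is cleaner than the paper's separate expansions of $e_1c$ and $(\alpha-e_1e_2)c$). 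Second, computing the two matrices of multiplication by $c$ in the bases $(e_1,e_2,e_3)$ and $(f_1,f_2,f_3)=(\beta-e_2e_3,\gamma-e_3e_1,\alpha-e_1e_2)$: I verified $ce_1=-f_1+\alpha f_2+\gamma f_3$ and $ce_2=\alpha f_1-f_2+\beta f_3$, and your $cf_1$ agrees with the paper's; obtaining part 5 from $\mathrm{adj}(Q)\,Q=\det Q\cdot I$ then collapses the paper's page-long expansion of $c^2$ to one line, and makes parts 2, 5 and 6 consequences of a single computation. Third, deducing part 2 from centrality and part 5: the continuity argument is legitimate (the structure constants of $\mathcal H_{\alpha,\beta,\gamma}$ in the monomial basis are polynomial in $(\alpha,\beta,\gamma)$), but your direct fallback $f_1f_2+f_2f_1=-2(\alpha+\beta\gamma)$ --- which is exactly the paper's computation --- is just as short and avoids the density argument; note also that the explicit matrix $\widetilde Q$ is quoted later in the proof of Theorem \ref{Signature-theorem}, so it must be exhibited in any case, which is what the paper's more pedestrian route buys.

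The one place where you stop at a plan rather than a proof is the dimension of $Z(\mathcal H)$ in part 4. Your method --- solve the linear system $[x,e_i]=0$, $i=1,2,3$, over the full eight-dimensional algebra --- is exactly what the paper carries out in Section \ref{Appendix}: commutation with $e_1$ kills the $e_2,e_3$ components and ties the $\widetilde V$-coordinates together, commutation with $e_2,e_3$ then forces proportionality to three vectors simultaneously, and an extra central element exists precisely when the matrix $\left(\begin{smallmatrix} 1 & -\gamma & -\beta\\ -\gamma & 1 & -\alpha\\ -\beta & -\alpha & 1 \end{smallmatrix}\right)$ has rank $1$, i.e.\ $|\alpha|=|\beta|=|\gamma|=1$ and $\gamma=-\alpha\beta$, the extra element being precisely the $z$ you name. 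So the deferral is safe: the computation is routine linear elimination, and your predicted criterion and central element coincide with the paper's. But be aware that, as written, your proposal asserts rather than proves the generic equality $Z(\mathcal H)=\langle 1,c\rangle$; the grading shows a new central element would have to be odd-or-even separately, which shortens the elimination, but the elimination itself still has to be run to rule out central elements in $V\oplus\widetilde V\pmod{\langle 1,c\rangle}$ away from the rank-one locus.
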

The equality of the restrictions of quadratic forms $q$ and $Sq$, stated in part 1, assumes that we identify $\RR$
with $\RR\cdot 1 \subset \mathcal H$. 
The first half of part 1 easily follows from the relations of algebra $\mathcal H$. 
The second half is proved in Section \ref{Appendix}.

In part 3 we easily have $1\perp V$, as $e_i$ are imaginary units, hence $Tr(\rho_{reg}(e_i\cdot 1)=0$
and  we also easily have $1\perp \widetilde{V}$.
The directly verifiable part 7 together with the orthogonality relation $c\perp 1$ (which is equivalent to
$Tr(\rho_{reg}(c))=Tr(\rho_{reg}(e_1e_2e_3))=0$, shown also in Section \ref{Appendix})
and the explained relations $1\perp V, 1 \perp \widetilde{V}$
imply most of the orthogonality relations in part 3,  the remaining ones are verified in Section \ref{Appendix}.

For the proofs of parts 4, 5, 6 and 8 we refer to Section \ref{Appendix}.

The property 2) can be directly verified using the calculations for  1)  in Section \ref{Appendix}
 and writing down the matrices 
of the forms $Q_{\alpha,\beta,\gamma}$ and $\widetilde{Q}_{\alpha,\beta,\gamma}$,
\[
Q_{\alpha,\beta,\gamma}=\left(\begin{array}{ccc}
-1 & \alpha & \gamma \\
\alpha & -1 & \beta \\
\gamma & \beta & -1 \\
\end{array}\right),
\widetilde{Q}_{\alpha,\beta,\gamma}=
-\left(\begin{array}{ccc}
1-\beta^2 & \beta\gamma+\alpha & \alpha\beta+\gamma \\
\beta\gamma+\alpha & 1-\gamma^2 & \alpha\gamma+\beta \\
\alpha\beta+\gamma & \alpha\gamma+\beta  & 1-\alpha^2
\end{array}\right).
\]
we see that $\widetilde{Q}_{\alpha,\beta,\gamma}$ is minus the adjugate matrix of $Q_{\alpha,\beta,\gamma}$. We will further use the shorter notations  $Q=Q_{\alpha,\beta,\gamma}$ and $\widetilde{Q}=\widetilde{Q}_{\alpha,\beta,\gamma}$.


\begin{thm}
\label{Signature-theorem}
 If the form $q$ is non-degenerate, then its signature is one of the three
$(2,6), (6,2)$ or $(4,4)$. 
In the cases of signature $(2,6)$ and $(4,4)$ the algebra $\mathcal H$
contains $\HH$ as a subalgebra. In the case of signature $(6,2)$ $\mathcal H$
does not contain $\HH$.
If $q$ is degenerate, its signature is $(1,3,4)$, $(3,1,4)$ or $(1,1,6)$.
For  a  degenerate $q$ the algebra $\mathcal H$ contains $\HH$ only
in the case of signature $(1,3,4)$.
\end{thm}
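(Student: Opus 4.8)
The plan is to read off the signature of $q$ from the $q$-orthogonal decomposition $\mathcal H = \RR\cdot 1 \oplus V \oplus \widetilde V \oplus \RR\cdot c$ of Proposition~\ref{Prop-properties-of-H}(3), which reduces the problem to the four diagonal blocks. On $\RR\cdot 1$ the form is $q(1,1)=1$, hence positive; on $\RR\cdot c$ we have $q(c,c)=\frac{1}{8}Tr(\rho_{reg}(c^2)) = -\det Q$ by part 5), so this block is positive, negative, or null according to the sign of $\det Q$. By parts 1) and 2) the two middle blocks are exactly $q|_V = Q$ and $q|_{\widetilde V} = \widetilde Q = -\operatorname{adj} Q$. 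Thus the whole signature is governed by the single $3\times 3$ matrix $Q$, and $q$ is degenerate precisely when $\det Q = 0$ (the three variable blocks degenerating simultaneously).

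Next I would enumerate the possible signatures of $Q$. Since $Tr\,Q = -3$, at most two eigenvalues of $Q$ are positive, which leaves exactly the six sign patterns $(0,3),(1,2),(2,1)$ and the degenerate $(0,2,1),(1,1,1),(0,1,2)$. For each I would compute the signature of $\widetilde Q=-\operatorname{adj} Q$ using the classical fact that the eigenvalues of $\operatorname{adj}Q$ are the pairwise products $\lambda_i\lambda_j$ of the eigenvalues of $Q$; for instance $(1,2)$ gives $\widetilde Q$ of type $(2,1)$, while $(0,3)$ gives $(0,3)$. Pairing this with the sign of $\det Q$ (equivalently of $q(c,c)$) and summing the four blocks yields the asserted list: in the nondegenerate case $(0,3)\mapsto(2,6)$, $(1,2)\mapsto(4,4)$, $(2,1)\mapsto(6,2)$, and in the degenerate case $(0,2,1)\mapsto(1,3,4)$, $(1,1,1)\mapsto(3,1,4)$, $(0,1,2)\mapsto(1,1,6)$. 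This establishes the signature classification.

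For the presence of $\HH$ I would argue in two directions. For the positive direction, in the signatures $(2,6),(4,4),(1,3,4)$, that is, precisely when $Q$ admits a $2$-dimensional negative-definite subspace, I would choose a $q$-orthogonal pair $u,v\in V$ with $Q(u)=Q(v)=-1$. Polarizing the identity $Sq|_V=q|_V$ of part 1) gives $uv+vu = 2q(u,v)\cdot 1 = 0$ together with $u^2=v^2=-1$, so $u,v$ generate a copy of $\HH$ inside $\mathcal H$.

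The main obstacle is the negative direction, namely excluding $\HH$ in the signatures $(6,2),(3,1,4),(1,1,6)$, where the naive search inside $V$ no longer suffices and one must rule out copies of $\HH$ arising from arbitrary elements of $\mathcal H$. The key observation is representation-theoretic: if $\HH\subset\mathcal H$, then, $\HH$ being a division ring, $\mathcal H$ is a free left $\HH$-module, necessarily of rank $2$; hence the restriction of the regular representation to $\HH$ is $2\rho_\HH$, and therefore $q|_\HH$ coincides with the normalized trace form of $\HH$ itself, which has signature $(1,3)$. Consequently $\mathcal H$ would have to contain a $3$-dimensional $q$-negative-definite subspace, forcing the negative index of $q$ to be at least $3$. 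Since the negative indices in the three excluded signatures are $2,1,1$ respectively, no copy of $\HH$ can exist there, which completes the characterization.
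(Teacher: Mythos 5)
Your proof is correct, and its skeleton is the paper's: you use the $q$-orthogonal decomposition $\mathcal H=\RR\cdot 1\oplus V\oplus\widetilde V\oplus\RR\cdot c$ of Proposition~\ref{Prop-properties-of-H}, the identities $q|_V=Q$, $q|_{\widetilde V}=\widetilde Q=-\operatorname{adj}Q$, $q(c,c)=-\det Q$, and the signature-$(1,3)$ obstruction to rule out $\HH$ in the cases $(6,2)$, $(3,1,4)$, $(1,1,6)$ --- the last point being exactly the remark the paper makes right after the theorem statement, for which you supply the justification it omits (since $\HH$ is a division ring, $\mathcal H$ is a free left $\HH$-module of rank $2$, so $\rho_{reg}|_{\HH}=2\rho_\HH$ and $q|_\HH$ is the normalized trace form of $\HH$; note this tacitly assumes the copy of $\HH$ is unital, i.e.\ contains the identity of $\mathcal H$, which is also the paper's convention). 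Where you genuinely diverge is in the execution of two steps, in both cases more uniformly than the paper. First, for the enumeration: the paper classifies the signatures of $Q$ by explicit congruences $T^tQT$, reducing to a $2\times 2$ block $A$ and treating six cases (including separate computations of $T^t\widetilde Q T$ in the degenerate ones), whereas you use $Tr\,Q=-3$ to exclude all patterns with no negative eigenvalue and read off the signature of $\widetilde Q$ from the fact that $\operatorname{adj}Q$ has eigenvalues $\lambda_i\lambda_j$; this is shorter and handles the degenerate cases without extra work, but the paper's case analysis buys the explicit coordinate conditions (e.g.\ $|\alpha|\lessgtr 1$, the sign of $\det Q$, and in Case 5 that some $|\alpha|,|\gamma|<1$) that it reuses both inside the proof and toward the realizability assertion of Theorem~\ref{Algebraic-theorem}, which your spectral argument does not produce --- though the statement of Theorem~\ref{Signature-theorem} itself does not require it. Second, for the existence of $\HH$: the paper splits into cases, finding an anticommuting pair inside $V$ when some $|\alpha|<1$, and otherwise passing to the plane $P=\langle\alpha-e_1e_2,\gamma-e_3e_1\rangle\subset\widetilde V$ anticommuting with $e_1$; your single argument --- a $2$-dimensional $q$-negative-definite subspace of $V$ exists precisely in the three quaternionic signatures, and polarizing $Sq|_V=q|_V$ turns any $(-q)$-orthonormal pair in it into anticommuting imaginary units --- covers all three cases at once inside $V$ itself (in the $(1,2)$ case the negative index of $Q$ is still $2$, so the plane exists there too), and is in effect the universality observation of Remark~\ref{Universality-remark} pushed one step further.
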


Note that the part of the statement about $\mathcal H$ not containing $\HH$
in the case of $q$ of signatures $(6,2)$, $(3,1,4)$ and $(1,1,6)$ is trivial: indeed, if there is $\HH \subset \mathcal H$ 
then the restriction $q|_{\HH}$  must have signature $(1,3)$, which is not possible
in the specified cases.

\begin{proof}[Proof of Theorem \ref{Signature-theorem}]
From the definition of $Q$ and the orthogonality relation $1 \perp V$, which is contained in part 3 of 
Proposition \ref{Prop-properties-of-H}, we  have that
the matrix of the restriction of $q$ to the 4-subspace $\langle 1,V\rangle = \langle 1, e_1,e_2,e_3\rangle$
in the basis $1, e_1,e_2,e_3$ is
\[
\left(\begin{array}{cc}
1 & \mathbb{0}_{1\times 3}\\
\mathbb{0}_{3\times 1} & Q \\
\end{array}\right). 
\]
 The definition of $\widetilde{Q}$, the orthogonality relation $c\perp \widetilde{V}$ contained in part 3 of Proposition \ref{Prop-properties-of-H} and part 5 of this proposition  allow us to write down the matrix of the restriction of $q$ 
to the subspace $\langle \widetilde{V},c\rangle$ in the respective basis, 
\[
\left(\begin{array}{cc}
\widetilde{Q} & \mathbb{0}_{3\times 1}\\
\mathbb{0}_{1\times 3} & -\det Q
\end{array}\right).
\] 
As $\widetilde{Q}$  is minus the adjugate matrix of  $Q$, 
the signature of $q$ is completely determined by  the signature of $Q$. 
When $Q$ is non-degenerate, we have the relation $$\widetilde{Q}=-(\det Q)Q^{-1},$$
which shows that $q$ is non-degenerate as well and makes it easy to determine the signature of $\widetilde{Q}$ and of $q$. 

\medskip

Let us now get to classifying all possible signatures of $Q$ and of $\widetilde{Q}$. 
Introducing 
\[
T=\left(\begin{array}{ccc}
1 & \alpha & \gamma \\
0 & 1 & 0 \\
0 & 0 & 1
\end{array}\right),
\mbox{ we get }
T^tQT=\left(\begin{array}{ccc}
-1 & 0 & 0 \\
0 & \alpha^2-1 & \alpha\gamma+\beta \\
0 &  \alpha\gamma+\beta & \gamma^2-1
\end{array}\right).
\]
Introduce the matrix
\[A=
\left(\begin{array}{cc}
\alpha^2-1 & \alpha\gamma+\beta \\
\alpha\gamma+\beta & \gamma^2-1
\end{array}\right),\det\,A=-\det\,Q.
\]
\noindent {\it Case 1.} $A$ has signature $(++)$ if and only if $|\alpha|>1$ and $\det Q<0$ (and
then, as the signature of $Q$ is determined by that of $A$, by choosing appropriate $T$'s we can
see that $|\beta|, |\gamma|>1$ as well). 
Then the signature of $Q$ is $(++-)=(2,1)$ and the signature of $q$ is $(6,2)$.
\medskip

\noindent {\it Case 2.} $A$ has signature $(--)$ if and only if $|\alpha|<1$ and $\det Q <0$ (again, then automatically  
$|\beta|, |\gamma|<1$). The signature of $Q$ in this case is $(---)=(0,3)$ and the signature of $q$ is $(2,6)$.

\medskip

\noindent {\it Case 3.} $A$ has signature $(+-)$ if and only if  $\det Q>0$.
In this case the signature of $Q$ is $(+--)=(1,2)$ and the signature of $q$ is $(4,4)$.
\medskip

In Case 1, as we discussed above, the signature $(6,2)$ guarantees that $\mathcal H$
does not contain $\HH$.

In Case 2 a subalgebra $\HH$ in $\mathcal H$ arises from a pair of anticommuting imaginary units
that can be taken already in the subspace $\langle e_1,e_2,e_3\rangle$. Indeed, if, 
for example, $|\alpha|<1$, then $e_1$ and $e_2$ generate a subalgebra in $\mathcal H$,  
isomorphic to $\HH$, 
as the imaginary unit $\frac{1}{\sqrt{1-\alpha^2}}(\alpha e_1+e_2)$ anticommutes with
$e_1$.

In Case 3 we consider, for example, the plane $P=\langle \alpha-e_1e_2, \gamma-e_3e_1\rangle
\subset \langle e_1,e_2,e_3 \rangle^{\perp}$ and note that actually both of
$\alpha-e_1e_2$ and $\gamma-e_3e_1$ anticommute with $e_1$ (and, of course, one could 
similarly choose analogous anticommuting planes for $e_2$ and $e_3$ as well).
Next, we want to show that $P$ contains an imaginary unit, which,  together with $e_1$,  would give 
us a quaternionic subalgebra $\HH \hookrightarrow \mathcal H$. 
For that we consider the square of a general element of $P$, 
$(x(\alpha -e_1e_2) +y(\gamma-e_3e_1))^2=x^2(\alpha^2-1)-2xy(\alpha\gamma+\beta)
+y^2(\gamma^2-1) \in \RR\cdot 1 \hookrightarrow \mathcal H$.
This is precisely the value $q(x(\alpha -e_1e_2) +y(\gamma-e_3e_1))$, which can be verified
directly or follows by part 1 of Proposition \ref{Prop-properties-of-H}. 
The matrix of $q|_P$
\[\left(\begin{array}{cc}
\alpha^2-1 & -(\alpha\gamma+\beta)\\
-(\alpha\gamma+\beta) & \gamma^2-1
\end{array}\right)
\]
has the determinant equal to $\det A=-\det Q<0$.
So the form $q|_P$ has signature $(+-)$ and it is possible to find a $q$-negative vector
$v=x(\alpha -e_1e_2) +y(\gamma-e_3e_1) \in P, x,y \in \RR$, 
such that $v^2=q(v)=-1 \in \RR\cdot 1 \hookrightarrow \mathcal H$.
  Then the anticommuting pair
$\langle e_1, v \rangle$ determines an embedding 
$\HH \hookrightarrow \mathcal H$. 

\bigskip

If $q$ is degenerate then, as above, we need to consider several cases for $A$.

\medskip

{\it Case 4. $A$ has signature  $(+0)$}. Then $|\alpha|,|\gamma|\geqslant 1$
and they cannot be both equal 1, so that there is at least one of them strictly greater than 1.
Assume, say $|\alpha|>1$ (the subcase when  $|\gamma|>1$ is ruled out in a similar way). We need to consider now $T^t\widetilde{Q}T$,
\[T=
\left(\begin{array}{ccc}
1 & 0 & 0 \\
0 & 1 & 0\\
\frac{\alpha\beta +\gamma}{\alpha^2-1} & \frac{\alpha\gamma +\beta}{\alpha^2-1} & 1\\
\end{array}\right),
T^t\widetilde{Q}T=\frac{1}{\alpha^2-1}
\left(\begin{array}{ccc}
-\det Q & -\alpha \det Q & 0\\
-\alpha \det Q & -\det Q & 0\\
0 & 0 & (\alpha^2-1)^2
\end{array}\right),
\]
which, given that $\det Q=0$ amounts to the signature $(+00)=(1,0,2)$ of $\widetilde{Q}$, which, together with the signature 
$(+-0)=(1,1,1)$ of $Q$ and the signature $(+0)=(1,0,1)$ of $q|_{\langle 1,c\rangle}$ gives the signature of $q$ being $(3,1,4)$. 

\medskip

{\it Case 5. $A$ has signature  $(-0)$}. Then $|\alpha|, 
 |\gamma|\leqslant 1$
and if we have both equalities, then the condition $det\, A=0$ means that $\alpha\gamma+\beta=0$,
so that $A=0$, which is impossible in the current case. So in this case at least one of the absolute values
$|\alpha|,|\gamma|$ is strictly less than 1. 
 We just repeat the arguments above and get that
the signature of $T^t\widetilde{Q}T$ is $(0,1,2)$, which, together with 
the signature $(0,2,1)$ of $Q$ and the signature $(1,0,1)$ of $q|_{\langle 1,c\rangle}$, gives the signature of $q$
being $(1,3,4)$. Note that in this case the fact, that some of $|\alpha|, |\gamma|$
must be strictly less than 1 guarantees that $\HH \hookrightarrow \mathcal  H$.

{\it Case 6. $A=0$}. In this case $|\alpha|=|\gamma|=1$ and $\beta=-\alpha\gamma$ (so that 
$|\beta|=1$ as well), and so we have that
$\alpha\beta+\gamma=\alpha\gamma+\beta=\beta\gamma+\alpha=0$. This means that $\widetilde{Q}$
is the zero matrix and in this case the signature of $Q$ is $(0,1,2)$ and  the signature of $q$  is $(1,1,6)$.

We have seen that if $q$ is degenerate then only in the case of signature $(1,3,4)$ we 
actually get that $\HH$ embeds into $\mathcal H$ and so the proof is now complete.
\end{proof}
\begin{rem}
\label{Universality-remark}
Note that we could use the argument, establishing the embedding $\HH \hookrightarrow \mathcal H$,  
in Case 3 for Case 2 as well,
because $\det A=-\det Q>0$ and the condition $|\alpha|<1$ that we have in Case 2 gives us that $q|_P$ is a negatively definite form.
It was illustrative, however, to emphasize that in Case 2 the embedding $\HH \hookrightarrow \mathcal H$
can be provided by the means of finding an anticommuting pair among the basis elements 
of $V=\langle e_1,e_2,e_3\rangle$ already, without
referring to its orthogonal complement.
\end{rem}

In the course of the proof of Theorem \ref{Signature-theorem}
we have seen that all possible signatures of the restriction $Q=q|_{\langle e_1,e_2,e_3\rangle}$,
determined by the choice of a standard generating set $e_1,e_2,e_3$ for our $\mathcal H$  are in one-to-one correspondence with the signatures of our (independent of choice of generators) form $q$: 
$(2,1)\leftrightarrow (6,2)$, $(0,3)\leftrightarrow (2,6)$, $(1,2)\leftrightarrow (4,4)$,
$(1,1,1)\leftrightarrow (3,1,4)$, $(0,2,1)\leftrightarrow (1,3,4)$, $(0,1,2)\leftrightarrow (1,1,6)$.

That is, indeed the signature of $Q$ does not depend on the choice of a standard generating set and so is an isomorphism invariant of $\mathcal H$. This completes the 
proof of the part of the statement of Theorem \ref{Algebraic-theorem} regarding the signature of $Q$.

\begin{cor}
If $Q$ has any of signatures $(0,3)$, $(0,2,1)$, $(1,2)$, or, what is the same,
$\mathcal H$ contains $\HH$, then  $\mathcal H\cong \mathcal H(\varepsilon)
=\HH\oplus \HH c$ for the central element $c$, proportional to 
$\beta e_1-\gamma e_2+\alpha e_3-e_1e_2e_3$, $c^2=\varepsilon$, $\varepsilon=1,0,-1$
respectively.
\end{cor}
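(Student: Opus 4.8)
The plan is to realize $\mathcal H$ as $\HH\otimes_\RR C$ for the centralizer $C$ of the embedded quaternions, and then to identify $C$ with $\RR[c]$. First I would fix the normalized central element. By Theorem \ref{Signature-theorem}, each of the signatures $(0,3),(0,2,1),(1,2)$ forces an embedding $\HH\hookrightarrow\mathcal H$; I pick anticommuting imaginary units $i,j\in\mathcal H$ generating such a copy of $\HH$, with $k=ij$. By parts 4 and 5 of Proposition \ref{Prop-properties-of-H} the element $c=\beta e_1-\gamma e_2+\alpha e_3-e_1e_2e_3$ is central and satisfies $c^2=-\det Q\cdot 1$. The proof of Theorem \ref{Signature-theorem} records that $\det Q<0$ in signature $(0,3)$, $\det Q>0$ in signature $(1,2)$, and $\det Q=0$ in signature $(0,2,1)$; hence, after rescaling $c$ by a positive real when $\det Q\neq 0$, I obtain a central element, still a real multiple of $\beta e_1-\gamma e_2+\alpha e_3-e_1e_2e_3$, with $c^2=\varepsilon$ equal to $1,-1,0$ respectively. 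Since the center is $2$-dimensional (part 4 of Proposition \ref{Prop-properties-of-H}), the elements $1$ and $c$ are linearly independent, so $c\notin\RR\cdot 1$ and $\RR[c]=\langle 1,c\rangle\cong\RR[t]/(t^2-\varepsilon)$.

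Next I would invoke the centralizer theorem for central simple algebras. The algebra $\HH$ is $4$-dimensional and central simple over $\RR$, so $\HH\otimes_\RR\HH^{\mathrm{op}}\cong\mathrm{End}_\RR(\HH)\cong M_4(\RR)$, and under the two-sided action $(a\otimes b)\cdot x=axb$ the space $\mathcal H$ becomes an $M_4(\RR)$-module. Every such module is a direct sum of copies of the unique simple module $\RR^4$, so $\mathcal H\cong\HH^{\oplus m}$ with $4m=\dim_\RR\mathcal H=8$, i.e.\ $m=2$. The centralizer theorem then states that the multiplication map $\HH\otimes_\RR C\to\mathcal H$, $h\otimes x\mapsto hx$, is an algebra isomorphism, where $C=\{x\in\mathcal H\mid xh=hx\ \forall h\in\HH\}$ is the centralizer and $\dim_\RR C=m=2$. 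Because every element of $Z(\mathcal H)$ commutes with $\HH$, I have $\langle 1,c\rangle=Z(\mathcal H)\subseteq C$; comparing dimensions yields $C=\langle 1,c\rangle=\RR[c]$.

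Combining the two steps gives $\mathcal H\cong\HH\otimes_\RR\RR[t]/(t^2-\varepsilon)=\HH\oplus\HH c$, with the central generator $c$ satisfying $c^2=\varepsilon$; by the defining presentation this is precisely $\mathcal H(\varepsilon)$. Thus signatures $(0,3),(0,2,1),(1,2)$ produce $\mathcal H(1),\mathcal H(0),\mathcal H(-1)$ respectively, as claimed.

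The main obstacle is the degenerate case $\varepsilon=0$. There $c$ is nilpotent, so the quick ``$c$ invertible'' shortcut is unavailable: for $\varepsilon=\pm1$ one can argue directly that $\HH\cap\HH c=0$ (if $0\neq h=h'c$ with $h,h'\in\HH$, then $h'$ is invertible in the division algebra $\HH$, so $c=h'^{-1}h\in\HH\cap Z(\mathcal H)=\RR$, contradicting $c\notin\RR$), whence $\dim(\HH+\HH c)=8$ and $\mathcal H=\HH\oplus\HH c$. The centralizer-theorem route handles $\varepsilon=0$ on the same footing; if instead one prefers a hands-on argument, one must check injectivity of the homomorphism $\mathcal H(0)\to\mathcal H$ sending $i,j,c$ to the chosen generators, using that the unique nontrivial two-sided ideal of the dual-quaternion algebra $\HH\oplus\HH c$ is $\HH c$, together with $c\notin\ker$.
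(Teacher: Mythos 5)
Your proof is correct, but it takes a genuinely different route from the paper. The paper's verification is a two-line direct argument: the nonzero central element $c$ cannot lie in $\HH$ (an element of $\HH$ commuting with all of $\HH$ lies in $Z(\HH)=\RR\cdot 1$, while $c\notin \RR\cdot 1$), and then, since $\dim_\RR\mathcal H=8$, one gets $\mathcal H=\HH\oplus\HH c$ at once; the identification with $\mathcal H(\varepsilon)$ is immediate from $c^2=-\det Q$ together with the sign of $\det Q$ in each signature. You instead invoke the double centralizer theorem for the central simple subalgebra $\HH\subset\mathcal H$, obtaining $\mathcal H\cong\HH\otimes_\RR C$ with $C=C_{\mathcal H}(\HH)$ of dimension $2$, and then pin down $C=Z(\mathcal H)=\RR[c]\cong\RR[t]/(t^2-\varepsilon)$. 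This is heavier machinery than the paper needs, but it is applied correctly (the $\HH\otimes_\RR\HH^{\mathrm{op}}\cong M_4(\RR)$-module argument does not require $\mathcal H$ to be semisimple, so the nilpotent case $\varepsilon=0$ is indeed covered uniformly), and it buys the extra structural statement $C=Z(\mathcal H)$, i.e.\ the centralizer of $\HH$ is exactly the center.

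One inaccuracy worth correcting: the ``obstacle'' you describe in the degenerate case is illusory. Your hands-on argument that $\HH\cap\HH c=0$ --- if $0\neq h=h'c$ with $h,h'\in\HH$, then $c=h'^{-1}h\in\HH\cap Z(\mathcal H)=\RR\cdot 1$, contradiction --- uses only the invertibility of $h'$ in the division algebra $\HH$, never the invertibility of $c$, so it works verbatim for $\varepsilon=0$ (likewise $\dim\HH c=4$ follows from $c\neq 0$: if $hc=0$ with $h\neq 0$ then $c=h^{-1}(hc)=0$). This uniform direct argument is essentially the paper's own proof. Consequently the ideal-theoretic injectivity check you sketch for $\mathcal H(0)\to\mathcal H$ is unnecessary: once $\mathcal H=\HH\oplus\HH c$ with $c$ central and $c^2=\varepsilon$, the evident surjection $\mathcal H(\varepsilon)\to\mathcal H$ between $8$-dimensional algebras is automatically an isomorphism.
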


In order to verify the statement of the corollary one just needs to observe 
the (trivial) fact that the (nonzero) central element does not belong to $\HH\subset \mathcal H$,
so, given that $\dim_\RR\mathcal{H}=8$, we have that $\mathcal H=\HH\oplus \HH c$.
 This 
 completes the proof of the part of the statement of Theorem \ref{Algebraic-theorem},
regarding the isomorphism classes of $\mathcal H \supset \HH$. 

\begin{rem}
When $Q=Q_{\alpha,\beta,\gamma}$ is of signature $(0,3)$,  the triangle 
$\triangle e_1 e_2 e_3$
is formed by 2-spheres (which can be considered as twistor spheres in $\mathcal H$
spanned by anticommuting elements,
or, under the regular representation, as twistor spheres in $End\, \RR^8$). 
Then, as we know, $\alpha=-\cos\,\sphericalangle e_1 e_2, 
\beta=-\cos\,\sphericalangle e_2 e_3,\gamma=-\cos\,\sphericalangle e_3 e_1$.
Normalizing the respective basis of $\widetilde{V}$ as 
$f_1=\frac{\beta-e_2e_3}{\sqrt{1-\beta^2}}, f_2=\frac{\gamma-e_3e_1}{\sqrt{1-\gamma^2}},
f_3=\frac{\alpha-e_1e_2}{\sqrt{1-\alpha^2}}$ so as to have $f_1^2=f_2^2=f_3^2=-1$ and observing that 
$\cos\, \sphericalangle f_1 f_2=-q(f_1,f_2)=-\frac{1}{2}(f_1f_2+f_2f_1)=\frac{\alpha+\beta\gamma}{\sqrt{1-\beta^2}\sqrt{1-\gamma^2}}=
-\frac{(-\alpha)-(-\beta)(-\gamma)}{\sqrt{1-\beta^2}\sqrt{1-\gamma^2}}=
-\frac{\cos\,\sphericalangle e_1e_2-\cos\,\sphericalangle e_2e_3 \cdot
\cos\,\sphericalangle e_3 e_1}{\sin\,\sphericalangle e_2 e_3\cdot
\sin\,\sphericalangle e_3 e_1}$, which is equal, by the spherical cosine law, 
to $\cos\, (\pi- \angle e_1e_3e_2)$,
and similarly for the pairs $f_1,f_3$ and $f_2,f_3$, so that the triangle  $\triangle f_1 f_2 f_3$
is also compact and is {\it polar} with respect to $\triangle e_1 e_2 e_3$,
that is, the distances between its vertices are equal $\pi - \angle e_1e_3e_2, 
\pi - \angle e_1e_2e_3, \pi - \angle e_3e_1e_2$, here 
the angles are taken between the geodesic segments lying on the corresponding 2-spheres 
forming the sides of
$\triangle e_1 e_2 e_3$,
and the distances between the vertices of $\triangle f_1f_2f_3$ are measured in the corresponding spheres, forming the sides of $\triangle f_1f_2f_3$. 
Here we extend the classical terminology for triangles  on a unit 2-sphere 
(see, for example, 
\cite[p. 49]{Rat}) to our twistor triangles.
\end{rem}

\section{The representation theory of $\mathcal H$}
\label{Regular-section}
This section is devoted to 
completing the proof of Theorem \ref{Algebraic-theorem} by classifying the irreducible representations of 
$\mathcal H$ and 
 proving Theorem \ref{Main-theorem}. 

Let us reproduce the respective part of the statement of Theorem \ref{Algebraic-theorem}
as a separate proposition.

\begin{prop}
\label{prop-regular}
The algebra $\mathcal H(1)=\HH\oplus \HH c, c^2=1,$ contains exactly two nontrivial two-sided 
ideals $\HH(1+c), \HH(1-c)$. 
The regular representation $\rho_{reg}$ decomposes as $\rho_1\oplus\rho_2$, where the non-faithful representations $\rho_1,\rho_2$ 
are the only irreducible (non-faithful) 4-representations of $\mathcal H$, $\rho_1,\rho_2$ are given by restrictions
of the regular representations
$\rho_{1}(h)=\rho_{reg}(h)|_{\HH(1+c)},\rho_2(h)=\rho_{reg}(h)|_{\HH(1-c)}$ for all $h \in \mathcal H$, and are isomorphic,
respectively, to the regular representation $\rho_{\HH}$ precomposed with   
 the quotient maps  $\mathcal H\rightarrow \mathcal H/\HH(1-c)\cong\HH, 
\mathcal H\rightarrow \mathcal H/\HH(1+c)\cong \HH$.

\medskip

The algebra $\mathcal H(0)=\HH\oplus \HH c, c^2=0$, contains exactly one nontrivial two-sided 
ideal $\HH c$.  
The 8-representation $\rho_8=\rho_{reg}$ is reducible but not completely reducible. There
are no faithful 4-representations of $\mathcal H$ and there exists exactly one non-faithful
4-representation $\rho_4(h)=\rho_{reg}(h)|_{\HH c}$, isomorphic to the regular
representation $\rho_{\HH}$ precomposed with the quotient map 
$\mathcal H\rightarrow \mathcal H/\HH c \cong \HH$.

\medskip

 The algebra $\mathcal H(-1)=\HH\oplus \HH\cdot c, c^2=-1$ 
 has no nontrivial two-sided ideals.
All proper left ideals are subspaces of dimension 4 in $\mathcal H$, that are of the form $\HH(w+c), w\in S^2\subset\HH, w^2=-1$.
%
The regular representation $\rho_{reg}$ decomposes as $\rho_0\oplus \rho_0$, where 
$\rho_0$ is the unique, up to isomorphism, irreducible (faithful) 4-representation of $\mathcal H$,
arising from any proper left ideal $\HH(w+c)$, $\rho_0(h)=\rho_{reg}(h)|_{\HH(w+c)}, h\in \mathcal H$.
\end{prop}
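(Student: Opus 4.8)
The plan is to treat all three algebras at once through the uniform description $\mathcal H(\varepsilon)=\HH\otimes_\RR Z_\varepsilon$, where $Z_\varepsilon=Z(\mathcal H(\varepsilon))=\langle 1,c\rangle\cong\RR[c]/(c^2-\varepsilon)$ is the center computed in Proposition \ref{Prop-properties-of-H}; here $c$ is central and commutes with $\HH$, so the tensor product makes sense and $\HH$ sits inside as $\HH\otimes 1$. The three values give $Z_1\cong\RR\times\RR$, $Z_0\cong\RR[\epsilon]/(\epsilon^2)$ (the dual numbers) and $Z_{-1}\cong\CC$. Since $\HH$ is central simple over $\RR$, the standard fact that the two-sided ideals of $\HH\otimes_\RR Z$ are precisely the subspaces $\HH\otimes J$ for ideals $J$ of $Z$ immediately yields the enumeration of nontrivial two-sided ideals: two of them, $\HH(1+c)$ and $\HH(1-c)$, for $\varepsilon=1$ (the two proper ideals of $\RR\times\RR$); exactly one, $\HH c$, for $\varepsilon=0$ (the maximal ideal $(\epsilon)$); and none for $\varepsilon=-1$, since $\CC$ is a field and $\mathcal H(-1)$ is therefore simple.

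For $\varepsilon=1$ I would note that $\tfrac12(1\pm c)$ are orthogonal central idempotents summing to $1$, so $\mathcal H(1)=\HH(1+c)\oplus\HH(1-c)\cong\HH\times\HH$ as algebras and is in particular semisimple. The two factors are the two non-isomorphic simple modules, giving the irreducible $4$-representations $\rho_1,\rho_2$; each is $\rho_\HH$ precomposed with the projection onto a factor, which is exactly the quotient map by the complementary ideal. The regular representation, being $\mathcal H(1)$ acting on itself, splits as $\rho_1\oplus\rho_2$ with each summand occurring once.

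For $\varepsilon=0$ the ideal $\HH c$ satisfies $(\HH c)^2=\HH c^2=0$, so it is the Jacobson radical and $\mathcal H(0)/\HH c\cong\HH$; hence the unique simple module is $\rho_4$, the quaternionic representation pulled back along this quotient. To see that no $4$-dimensional representation is faithful I argue directly: on a real $4$-space, $\rho(\HH)$ makes the space a one-dimensional left $\HH$-module, and centrality of $c$ forces $\rho(c)$ to be $\HH$-linear, i.e. right multiplication by some $q\in\HH$ with $q^2=\rho(c^2)=0$; as $\HH$ has no nonzero nilpotents, $q=0$, so $\rho$ kills $c$ and is not faithful. Finally $\rho_8=\rho_{reg}$ has the submodule $\HH c\cong\rho_4$ with quotient $\mathcal H(0)/\HH c\cong\rho_4$, an extension that cannot split (a splitting would make the radical a direct summand and $\mathcal H(0)$ semisimple), so $\rho_8$ is reducible but not completely reducible.

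For $\varepsilon=-1$, $Z_{-1}\cong\CC$ is a field, so $\mathcal H(-1)=\HH\otimes_\RR\CC$ is central simple over $\CC$ of complex dimension $4$ and hence isomorphic to $M_2(\CC)$; this gives at once the simplicity, the unique simple module $\CC^2$ of real dimension $4$ (the faithful $\rho_0$), and the splitting $\rho_{reg}\cong\rho_0\oplus\rho_0$ of $M_2(\CC)$ into its two columns. The part requiring real computation, and the main obstacle, is the description of all proper left ideals as $\HH(w+c)$ with $w\in S^2\subset\HH$, $w^2=-1$. I would first verify that each such subspace is a genuine left ideal: using that $c$ is central and $w^2=c^2=-1$, one computes $c(w+c)=cw+c^2=wc+w^2=w(w+c)$, so $\HH(w+c)$ is stable under multiplication by $c$ and hence by all of $\mathcal H(-1)=\HH+\HH c$; the $\HH\oplus\HH c$ grading shows $h\mapsto h(w+c)$ is injective on $\HH$, so $\dim_\RR\HH(w+c)=4$. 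Since the minimal left ideals of $M_2(\CC)$ have complex dimension $2$, these are exactly the minimal left ideals, and the relation $(w-c)(w+c)=0$ identifies the corresponding rank-one idempotent; that $w\mapsto\HH(w+c)$ is a bijection onto all proper left ideals then matches the parametrization of columns of $M_2(\CC)$ by $\PP^1(\CC)\cong S^2$, which is the last point I would check to finish.
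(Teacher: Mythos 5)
Your proof is correct in substance but follows a genuinely different route from the paper's. The paper argues by hand in each case: it takes a cyclic $\HH$-generator $w+c$ of a putative proper left ideal, imposes $c(w+c)=q(w+c)$, and solves ($q=w$, $w^2=\varepsilon$), which simultaneously enumerates the left ideals, identifies the two-sided ones, and produces explicit matrices (e.g.\ $\rho_{reg}(c)|_{\HH(1\pm c)}=\pm Id$, the block form of $L_c$, the $4\times 4$ matrix of $\rho_0(c)$); the equivalence of the $S^2$-family of representations of $\mathcal H(-1)$ is then proved by noting that the right $S^3$-action on ideals, $\HH(w+c)h=\HH(h^{-1}wh+c)$, is the transitive conjugation action on imaginary units. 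Your route instead packages everything into the structure theory: $\mathcal H(\varepsilon)\cong\HH\otimes_\RR Z_\varepsilon$ with $Z_\varepsilon\cong\RR\times\RR$, $\RR[\epsilon]/(\epsilon^2)$, $\CC$, the ideal correspondence $J\mapsto \HH\otimes J$ for a central simple factor, Artin--Wedderburn for $\HH\times\HH$, radical/semisimplicity for the dual-number case, and $\HH\otimes_\RR\CC\cong M_2(\CC)$ for the hyperbolic case. This is more conceptual and explains uniformly \emph{why} the three algebras behave differently (it is the arithmetic of the center), and your direct argument that no $4$-dimensional representation of $\mathcal H(0)$ is faithful (via $\mathrm{End}_\HH(\HH)\cong\HH^{op}$ and absence of nilpotents in $\HH$) is cleaner than anything in the paper. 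What the paper's computational approach buys is the explicit normal forms that are reused later (the trace identity $Tr(\rho(c))=4(2k-n)$ and $rk\,\rho(c)=4k$ in Theorem \ref{Main-theorem} lean directly on these restrictions of $\rho_{reg}(c)$), though your $M_2(\CC)$ and idempotent pictures would recover them with little extra work.

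The one place where your write-up is not yet a proof is the point you flag yourself: surjectivity of $w\mapsto\HH(w+c)$ onto \emph{all} proper left ideals of $\mathcal H(-1)$. Matching two $S^2$-parametrized families is not by itself an argument (an injection of $S^2$ into $\PP^1(\CC)$ need not be onto without a continuity-plus-compactness argument, which you would have to supply). The gap closes in two lines by the paper's normalization: a proper left ideal is a $4$-dimensional $\HH$-submodule, hence $\HH(u+vc)$ with $u,v\in\HH$; stability under $c$ rules out $v=0$ (else $uc\in\HH u\subset\HH$ forces $u=0$), so after left-multiplying by $v^{-1}$ the generator is $w+c$, and then $c(w+c)=wc-1=q(w+c)$ forces $q=w$ and $w^2=-1$ by comparing the $\HH$- and $\HH c$-components. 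With that inserted, your argument is complete.
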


The proof of Proposition \ref{prop-regular} is given in subsections \ref{Spherical-irreps}, \ref{Hyperbolic-irreps}, 
\ref{Cylindrical-irreps}.

\subsection{The case of spherical $\mathcal H=\mathcal H (1)$.}
\label{Spherical-irreps}
A proper  left ideal in $$\mathcal H (1)=\HH \oplus \HH\cdot c, c^2=1,$$
is an $\HH$-submodule of real dimension 4. Denote the generator of such an ideal by $w+c$
for $w\in\HH$.  Then the fact that $\mathcal H (w+c)=\HH(w+c)$
means that  $c(w+c)=q(w+c)$ for some $q \in \HH$. Then $c(w+c)=1+wc=q(w+c)=qw+qc$
means that $q=w$, $qw=w^2=1$, which means $w=\pm 1$ and so we have 
exactly two proper left ideals in $\mathcal H$: $\HH(1+c)\cong \RR^4$ and $\HH(1-c)\cong \RR^4$, with zero intersection, each of which is a two-sided ideal in $\mathcal H$. Clearly $c$ acts on the generator $1+c$ as the identity, so that 
$\rho_{reg}(c)|_{\HH(1+c)}=Id_{\RR^4}$, similarly,  $\rho_{reg}(c)|_{\HH(1-c)}=-Id_{\RR^4}$.

The rest of the statements about the irreducible 4-representations now follows easily.

\subsection{The case of hyperbolic $\mathcal H=\mathcal H (-1)$.}
\label{Hyperbolic-irreps}
A proper  left ideal in $$\mathcal H(-1)=\HH \oplus \HH\cdot c, c^2=-1,$$
is an $\HH$-submodule of real dimension 4. Denote the generator of such an ideal by $w+c$
for $w\in\HH$.  As above, the fact that $\mathcal H (w+c)=\HH(w+c)$
means that  $c(w+c)=q(w+c)$ for some $q \in \HH$. Then $c(w+c)=-1+wc=q(w+c)=qw+qc$
means that $q=w$, $qw=w^2=-1$, which means $w=xi+yj+zk\in S^2\subset \HH, x^2+y^2+z^2=1$. 
 So we have a sphere $S^2$ of (distinct) left ideals $\HH(w+c),w\in S^2$. 
All representations $h\mapsto \rho_{reg}(h)|_{\HH(w+c)}$ are equivalent,
as the right action of the group of unit quaternions $S^3\subset \HH$ on the set of our
left ideals is isomorphic to the conjugation action of $S^3$ on the sphere of imaginary quaternions $S^2$: 
$\HH(w+c)h=\HH(wh+ch)=\HH h(h^{-1}wh+c)=\HH(h^{-1}wh+c)$,
$w \mapsto h^{-1}wh, h\in S^3, w\in S^2$, and the latter action is transitive. 



As the above calculation shows, $c$ acts on $v:=w+c$, $w=xi+yj+zk$, on the left by 
the left multiplication by $q=w=xi+yj+zk$.
The $c$-invariant subspace $\HH v \subset \mathcal H$ is spanned over $\RR$ by vectors $v,iv,jv,kv$.
We have $cv=qv$, $$c\cdot iv=icv=iqv=(-x\cdot 1-zj+yk)v,$$ 
$$c\cdot jv=jcv=jqv=(-y\cdot 1 + zi - xk )v,$$
and $$c\cdot kv=kcv=kqv=(-z\cdot 1 - yi + xj)v.$$
In the basis $v,iv,jv,kv$ the operator of the left multiplication by $c$ has the following matrix
\[
\left(
\begin{array}{cc|cc}
0  &  -x &           -y         & -z \\
x  &   0           & z           & -y \\
\hline
y  &  -z            & 0           & x \\
z   &  y            & -x           & 0 
\end{array}\right).
\]
Set $\rho_0(h)=\rho_{reg}(h)|_{\HH(w+c)}$ for $h \in \mathcal H$. Then,
as $\mathcal H=\HH\oplus \HH c$ and $\rho_0(c),\rho_0(1),\rho_0(i),\rho_0(j),\rho_0(k)$
are linearly independent over $\RR$, we see that $\rho_0$ is faithful.

\subsection{The case of $\mathcal H (0)$.} 
\label{Cylindrical-irreps}
Arguing similarly to the above it is easy to see that $\mathcal H c=\HH c$ is the only proper left ideal in 
$\mathcal H$ 
 (which is also a two-sided ideal). 
In this case we do not have faithful 4-dimensional representations,
the only 4-dimensional representation comes from 
the regular representation  $\rho_\HH$ of $\mathcal H/\mathcal H c \cong \HH$ precomposed with
 the quotient map $\mathcal H\rightarrow \mathcal H/\mathcal H c$. 
 For the  operator $L_c$, acting on $\mathcal H=\HH\oplus \HH c, c^2=0,$ by the left multiplication by $c$, we have the equality
$Ker\,L_c= Im\, L_c$ of its kernel and image. 
 For example, in the $\RR$-base
$c,ic,jc,kc, 1, i,j,k$ our operator $L_c$ has the matrix 
$$\left(
\begin{array}{cc}
\mathbb 0_{4\times 4}  &  \mathbb 1_{4\times 4} \\
\mathbb 0_{4\times 4}  &  \mathbb 0_{4\times 4} \\
\end{array}\right),$$
where 
$\mathbb 1_{4\times 4}$ is the $4\times 4$-identity matrix and $\mathbb 0_{4\times 4}$
is the $4\times 4$-zero matrix.

So, summarizing our observations for this case we conclude that there is exactly one, up to isomorphism, irreducible representation for each of the dimensions 4 and 8, which we call $\rho_4$ and $\rho_8=\rho_{reg}$, 
and only the latter is faithful. 

\bigskip

Thus, the proof of Proposition \ref{prop-regular} is now complete and so is the proof of Theorem \ref{Algebraic-theorem}.

\subsection {General representations of the cylindrical $\mathcal H=\mathcal H(0)$.} For proving Theorem
\ref{Main-theorem} below, we need to show that a general representation of 
$\mathcal H\cong \mathcal H(0)$ is a sum of irreducible representations that arise from its regular representation.
\begin{prop}
\label{Prop-special-representation}
For every representation $\rho \colon \mathcal H(0) \rightarrow End\,V_\RR$ 
 we have $\rho=k\rho_8\oplus l\rho_4$
for appropriate integers $k,l\geqslant 0$. 
\end{prop}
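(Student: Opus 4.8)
The plan is to reduce the whole classification to the normal form of a single square-zero operator over the division ring $\HH$. Given a representation $\rho\colon \mathcal H(0)\to End\,V_\RR$, I would first restrict the action to the subalgebra $\HH\subset\mathcal H(0)$. Since $\HH$ is a division algebra, $V_\RR$ is a free left $\HH$-module, say $V_\RR\cong\HH^m$ with $4m=\dim_\RR V_\RR=4n$, so $m=n$. Because $c$ is central in $\mathcal H(0)$ (part 4 of Proposition \ref{Prop-properties-of-H}), the operator $N:=\rho(c)$ commutes with $\rho(\HH)$ and is therefore $\HH$-linear, and $N^2=\rho(c^2)=0$. Conversely, since $\mathcal H(0)=\HH\oplus\HH c$, the pair $(V_\RR,N)$ consisting of the $\HH$-module structure together with the square-zero $\HH$-endomorphism $N$ recovers $\rho$ completely. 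Thus it suffices to bring such an $N$ into a normal form and read off the summands.

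Next I would record that $N^2=0$ forces $\operatorname{im}N\subseteq\ker N$. Writing $r=\operatorname{rank}_\HH N$, rank-nullity over the division ring $\HH$ gives $\dim_\HH\ker N=m-r$, and the inclusion $\operatorname{im}N\subseteq\ker N$ yields $r\leqslant m-r$, so $l:=m-2r\geqslant 0$. I would then choose an $\HH$-basis $w_1,\dots,w_r$ of a complement to $\ker N$ in $V_\RR$; the images $Nw_1,\dots,Nw_r$ form an $\HH$-basis of $\operatorname{im}N$, which I extend to an $\HH$-basis $Nw_1,\dots,Nw_r,u_1,\dots,u_l$ of $\ker N$. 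This is precisely the $N^2=0$ case of the Jordan normal form: it uses no spectral theory, only the existence of bases and rank-nullity, and so is valid verbatim over the noncommutative $\HH$.

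Finally I would identify the summands. For each $i$ the $\HH$-span $\langle w_i, Nw_i\rangle_\HH$ is an $\mathcal H(0)$-submodule of $\HH$-dimension $2$ on which $c$ acts by $w_i\mapsto Nw_i$, $Nw_i\mapsto 0$; comparing with the matrix of $L_c$ computed in Subsection \ref{Cylindrical-irreps} (in the $\HH$-basis $\{1,c\}$ one has $1\mapsto c\mapsto 0$), the $\HH$-linear map $1\mapsto w_i,\ c\mapsto Nw_i$ is an $\mathcal H(0)$-isomorphism onto $\rho_8=\rho_{reg}$. For each $j$ the line $\HH u_j$ is a submodule on which $c$ acts as $0$, hence factors through $\mathcal H(0)/\HH c\cong\HH$ and is isomorphic to $\rho_4$. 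The resulting decomposition $V_\RR=\bigoplus_{i=1}^{r}\langle w_i,Nw_i\rangle_\HH\ \oplus\ \bigoplus_{j=1}^{l}\HH u_j$ reads $\rho\cong k\rho_8\oplus l\rho_4$ with $k=r$ and $l=m-2r$; in particular $k=\tfrac14\operatorname{rk}\rho(c)$, since $\operatorname{rk}_\RR\rho(c)=4\operatorname{rank}_\HH N$, matching the identification of $k$ in Theorem \ref{Main-theorem}.

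The only delicate point I anticipate is the normal-form step over the noncommutative division ring $\HH$, since over a noncommutative base one must take care that ``linear algebra'' arguments still apply; but as noted this is exactly the nilpotent square-zero situation, for which the construction of the basis $\{w_i,Nw_i,u_j\}$ goes through over any division ring. Everything else is bookkeeping, and the genuinely representation-theoretic content—that the only $\HH$-indecomposable pieces with $N^2=0$ are the two blocks realizing $\rho_4$ and $\rho_8$—is precisely what the normal form encodes.
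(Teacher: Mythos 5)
Your proposal is correct and takes essentially the same approach as the paper: both reduce everything to the square-zero $\HH$-linear operator $N=\rho(c)$ (central $c$, $c^2=0$, so $\operatorname{im}N\subset\ker N$) and split $V_\RR$ using $\HH$-invariant complements, your basis $\{w_i,Nw_i,u_j\}$ being exactly the paper's decomposition of $V_\RR$ into a complement $U$ of $\ker\rho(c)$, its image $\operatorname{im}\rho(c)$, and an $\HH$-invariant complement of $\operatorname{im}\rho(c)$ inside $\ker\rho(c)$. Your explicit identification of the blocks with $\rho_8$ and $\rho_4$ and the formula $k=\frac{1}{4}\operatorname{rk}\rho(c)$ just spell out details the paper leaves implicit.
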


\begin{proof}
Let us write $\mathcal H=\HH\oplus \HH c$, where $c \in Z(\mathcal H),c^2=0$. 
Then we have $Im\,\rho(c)\subset Ker\,\rho(c)$. Both subspaces $Im\,\rho(c), Ker\,\rho(c)\subset V_\RR$
are $\HH$-invariant, so, choosing an $\HH$-invariant complement $U\subset V_\RR$ to $Ker\,\rho(c)$
we get that $\rho(c)$ induces an isomorphism $U \cong Im\,\rho(c)$. Set $4l=\dim\, U$.
Similarly, there is an
$\HH$-invariant complement to $Im\,\rho(c)$ in $Ker\,\rho(c)$, of dimension $4k$ for an appropriate $k$.
 Hence  
we can write $\rho=k\rho_4\oplus l\rho_8$. 
\end{proof}


\subsection{The proof of Theorem \ref{Main-theorem}}
\label{Main-theorem-subsection}
Let us deal first with the uniqueness statements.
The part of the statement of Theorem \ref{Main-theorem} for hyperbolic $\mathcal H\cong \mathcal H(-1)$
follows from the statement of Proposition \ref{prop-regular}, that there is exactly one irreducible representation $\rho_0$ of $\mathcal H$,
which is a cyclic $\HH$-module.  
 Hence an arbitrary $4n$-representation $\rho$ of $\mathcal H$ is isomorphic to $n\rho_0=\bigoplus^n \rho_0$.

The part of the statement for spherical $\mathcal H=\mathcal H(1)$ follows from the statement
of Proposition \ref{prop-regular}, that there are exactly two irreducible 4-representations $\rho_1,\rho_2$
of $\mathcal H$, which are non-faithful and correspond to 
factoring $\mathcal H \rightarrow \HH$ with respect to each of the two ideals found in this case. Hence, an arbitrary representation can be written as $\rho=
k\rho_1\oplus (n-k)\rho_2$ 
and the fact that $\rho_1(c)=Id_{\RR^4}$ and $\rho_2(c)=-Id_{\RR^4}$ explained in 
\ref{Spherical-irreps} tells us that 
$Tr(\rho_1(c))=4$ 
and $Tr\,\rho_2(c)=-4$, so that $Tr\,\rho(c)=4(2k-n)$, which uniquely identifies $k$ and hence multiplicities of both $\rho_1$
and $\rho_2$  in the decomposition of $\rho$. 

Note that $k=0,\dots,n$, where the extremal cases $k=0,n$
correspond to  non-faithful representations, so that there are total of $n+1$ non-equivalent 
representations of the spherical algebra $\mathcal H$.

In the case $\mathcal H\cong \mathcal H(0)$, by Proposition \ref{prop-regular}, the irreducible representations of the cylindrical algebra $\mathcal H$
are the 8-representation $\rho_8$ and the (non-faithful, factoring through $\HH$) 4-representation $\rho_4$. 
 Proposition \ref{Prop-special-representation} tells us 
that an arbitrary representation $\rho$ of $\mathcal H(0)$ is isomorphic to a sum of 
these representations, $\rho=k\rho_8 \oplus l\rho_4$, where $8k+4l=4n$. The number of all such possible non-equivalent representations, including the
trivial one, $\mathcal H \rightarrow \HH$, is $\lfloor \frac{n}{2}\rfloor+1$. It is also clear that
$rk\, \rho(c)=4k$, which uniquely identifies the multiplicities $k$ and $l$.

The existence part now follows for the case $a)$ $\mathcal H_{\alpha,\beta,\gamma}\cong\mathcal H(-1)$ 
from existence and faithfulness of the 4-dimensional representation $\rho_0$, 
and in the case $b)$ 
it follows from  
the obvious faithfulness of $\rho_{reg}\colon \mathcal H \rightarrow End\,\RR^8$.

\section{Proof of Theorem \ref{Theorem-relation}}
\label{Proof of Theorem-relation}

Let us sketch the plan of the proof.  Let $S\subset Compl \subset End\,V_\RR$
be a compact twistor line and $I_1,I_2,I_3 \in S$ be linearly independent 
complex structure operators, $G_\HH=G_{I_1}\cap G_{I_2} \cap G_{I_3}=G_{I_1}\cap G_{I_2}$. 
Set $(\alpha,\beta,\gamma){=}T(\triangle I_1 I_2 I_3)$. 
 As it was explained in the introduction,
given $(g_1,g_2,g_3)\in m^{-1}(G_\HH)$ we can construct a (possibly degenerate) twistor triangle 
$\triangle I_1 I_2J_3,J_3= g_2(I_3)=g_1^{-1}(I_3),$ formed by the lines $S=S(I_1,I_2),g_2(S),g_2g_3(S)$, $g_2(I_3)\in g_2(S) \cap g_2g_3(S)$. As $Tr(I_2I_3)=Tr(g_2(I_2I_3))=Tr(g_2(I_2)g_2(I_3))=Tr(I_2J_3)$
and, similarly, $Tr(I_1I_3)=Tr(g_1^{-1}(I_1I_3))=Tr(I_1J_3)$, we have $T(\triangle I_1 I_2 J_3)=(\alpha,\beta,\gamma)$,
so that there is a mapping $$\tau\colon m^{-1}(G_\HH) \rightarrow \mathcal T_{\alpha,\beta,\gamma},$$
$$(g_1,g_2,g_3)\mapsto \triangle I_1 I_2 g_2(I_3),$$
where $\mathcal T_{\alpha,\beta,\gamma}=\{\mbox{triangles }\triangle I_1I_2J_3\subset Compl \mbox{ with }T(\triangle I_1I_2J_3)=(\alpha,\beta,\gamma)\}$. Here $I_1,I_2$ are fixed. 

First, we can easily see that $\mathcal T_{\alpha,\beta,\gamma}$ is naturally a union of $G_\HH$-orbits,  
which are in one-to-one correspondence with the $n+1$
equivalence classes of representations 
$\rho\colon \mathcal H_{\alpha,\beta,\gamma}\rightarrow End\,V_\RR$ of the spherical algebra
$\mathcal H_{\alpha,\beta,\gamma}$. There are exactly two orbits in $\mathcal T_{\alpha,\beta,\gamma}$,
that are one-element sets, each of which consists of a degenerate triangle, supported on $S$, these orbits correspond to the two classes of nonfaithful representations. 

Second, we explicitly determine the fibers of $\tau$ over the two $G_\HH$-inequivalent degenerate triangles in the image of $\tau$,
supported on $S$:  these fibers are the trivial and the $SO(3)$-type
components of $m^{-1}(G_\HH)$ listed in the statement of Theorem \ref{Theorem-relation}. 

Third, we show that $\tau$ is onto, in particular, the connected components of 
$m^{-1}(G_\HH)$ reduce to those of the fibers $\tau^{-1}(G_\HH \cdot \triangle I_1 I_2 J_3)$ 
of $\tau$ 
over the $G_\HH$-orbits in $\mathcal T_{\alpha,\beta,\gamma}$. 

Fourth, we calculate the fibers of $\tau$ over individual nondegenerate triangles in $\mathcal T_{\alpha,\beta,\gamma}$.
Each such fiber  is shown to be diffeomorphic to $G_\HH\times G_\HH\times G_\HH$, being thus connected. 

Fifth, 
 we calculate the fibers of $\tau$ over the $G_\HH$-orbits $G_\HH \cdot \triangle I_1 I_2 J_3$ in $\mathcal T_{\alpha,\beta,\gamma}$, 
these fibers, being connected, are the connected components of $m^{-1}(G_\HH)$ described in Theorem \ref{Theorem-relation}. 

Finally, we do a calculation showing that for any triangle in the orbit, corresponding to the 
equivalence class of the balanced representation of $\mathcal H_{\alpha,\beta,\gamma}$,   
its angles, up to taking complements to $\pi$, are equal to the respective angles of 
the spherical triangle $\triangle I_1 I_2 I_3$.

\subsection{ The $G_\HH$-orbit structure of  $\mathcal T_{\alpha,\beta,\gamma}$}
%
%
By definition, for every triangle $\triangle I_1I_2J_3 \in \mathcal T_{\alpha,\beta,\gamma}$
we have $T(\triangle I_1I_2J_3)=T(\triangle I_1I_2I_3)=(\alpha,\beta,\gamma)$. 
Writing $T(\triangle I_1 I_2 I_3)=\frac{1}{4n}(Tr\, I_1I_2,Tr\, I_2I_3,Tr\, I_3I_1)=
(-\cos \sphericalangle I_1I_2,-\cos \sphericalangle I_2I_3,-\cos \sphericalangle I_3I_1)$, by the spherical
cosine law we have that $\cos\,\angle I_2I_1I_3=\frac{\cos \sphericalangle I_2I_3-
\cos \sphericalangle I_1I_2 \cos \sphericalangle I_1I_3}
{\sin \sphericalangle I_1I_2 \sin \sphericalangle I_1I_3}=-\frac{\beta+\alpha\gamma}
{\sqrt{1-\alpha^2}\sqrt{1-\gamma^2}}$, where by the $\angle I_2I_1I_3$ we mean the angle formed by the geodesic segments $I_1I_2$ and $I_1I_3$. 
Now the fact that the ratio on the right side is a cosine of a certain angle, 
the angle is not equal to $0$ or $\pi$,
means that $\left|\frac{\beta+\alpha\gamma}
{\sqrt{1-\alpha^2}\sqrt{1-\gamma^2}}\right|<1$ which is precisely the requirement that 
$det\, Q_{\alpha,\beta,\gamma}<0$. As, for example, 
$|\alpha|<1$, 
we see that $\mathcal H_{\alpha,\beta,\gamma}$ contains $\HH$,  
thus, the signature of $Q_{\alpha,\beta,\gamma}$ can only be $(0,3)$. Thus, 
 the algebra $\mathcal H_{\alpha,\beta,\gamma}$  is spherical. 

Next, Theorem \ref{Main-theorem} tells us that there are $n+1$ classes of $G$-equivalent representations
of $\rho\colon \mathcal H_{\alpha,\beta,\gamma} \rightarrow End\,V_\RR$, among which there are
 2 non-faithful representations, factoring through $\mathcal H_{\alpha,\beta,\gamma}\rightarrow \HH$ and $n-1$ faithful representations. Fix an embedding of $\HH\hookrightarrow End\,V_\RR$
which corresponds to the subalgebra in $End\,V_\RR$ generated by $I_1,I_2$,
$\HH\cong \langle Id,I_1,I_2,I_1I_2\rangle \subset End\,V_\RR$, and define the set
$$Rep_{I_1,I_2}(\mathcal H_{\alpha,\beta,\gamma})=
\{\mbox{representations }
\rho\colon \mathcal H_{\alpha,\beta,\gamma}\rightarrow End\,V_\RR,\\ \rho(e_1)=I_1,\rho(e_2)=I_2\}.$$
The group $G_\HH$ acts on $Rep_{I_1,I_2}(\mathcal H_{\alpha,\beta,\gamma})$
via the adjoint action 
and, as there are total
of $n+1$ $G$-equivalence classes of general representations $\rho \colon \mathcal H_{\alpha,\beta,\gamma} \rightarrow End\,V_\RR$, we see that $Rep_{I_1,I_2}(\mathcal H_{\alpha,\beta,\gamma})$
is a union of $n+1$ $G_\HH$-orbits, that are $G_\HH$-equivalence classes. 
%

Once we make a choice of a central element $c,c^2=1$, each $\rho \in Rep_{I_1,I_2}(\mathcal H_{\alpha,\beta,\gamma})$ is uniquely determined by 
the image $\rho(c)$ 
or, equivalently,  by  the  image $\rho(e_3)=J_3\in Compl$. For example,
if we set $c=\frac{1}{r}(\beta e_1 -\gamma e_2 +(\alpha-e_1e_2)e_3),r=\sqrt{-\det\,Q_{\alpha,\beta,\gamma}}$, then $c$ is such a normalized central element and 
$$\rho(e_3)=\rho\left( (\alpha-e_1 e_2)^{-1}(rc - \beta e_1 + \gamma e_2)\right)
=(\alpha Id-I_1 I_2)^{-1}(r\rho(c) - \beta I_1 + \gamma I_2) \in S.$$ 

Thus, there is a $G_\HH$-equivariant bijection between the sets $Rep_{I_1,I_2}(\mathcal H_{\alpha,\beta,\gamma})$ and $ \mathcal T_{\alpha,\beta,\gamma}$ 
given by $\rho\mapsto \triangle I_1 I_2\rho(e_3)$.

Two $G_\HH$-equivalence classes of nonfaithful representations lead to two
 degenerate triangles supported on the same sphere $S$, and are determined by the choice 
$\rho(c)=\pm Id$ 
 or, equivalently, by
the choice of $\rho(e_3)=(\alpha Id-I_1 I_2)^{-1}(\pm r Id - \beta I_1 + \gamma I_2) \in S,$
one of which is our fixed $I_3$, we denote the other one by $I_3^\prime$. The relation of $I_3^\prime$ to $I_3$ will be explained below. 
The $G_\HH$-equivalence classes of $n-1$ faithful representations lead to $n-1$ 
$G_\HH$-equivalence classes of non-degenerate triangles $\triangle I_1 I_2 J_3 \in \mathcal T_{\alpha,\beta,\gamma}$. 

\subsection{The trivial component $\tau^{-1}(\triangle I_1 I_2 I_3)$}
For any $(g_1,g_2,g_3) \in \tau^{-1}(\triangle I_1 I_2 I_3)$ we must have $I_3=g_2(I_3)$,
that is, $g_2\in G_{I_2} \cap G_{I_3}=G_\HH$. Next, the requirement $g_1g_2g_3 \in G_\HH$
tells us that $I_3=g_1g_2g_3(I_3)=g_1g_2(I_3)=g_1(I_3)$, that is $g_1\in G_{I_1}\cap G_{I_3}=G_\HH$
and then, clearly, $g_3 \in G_\HH$. That is, we have the inclusion $\tau^{-1}(\triangle I_1 I_2 I_3)
\subset G_\HH\times G_\HH \times G_\HH$. The inclusion $G_\HH\times G_\HH \times G_\HH
\subset \tau^{-1}(\triangle I_1 I_2 I_3)$ is obvious, so that we have 
$\tau^{-1}(\triangle I_1 I_2 I_3)=G_\HH\times G_\HH \times G_\HH$, the most trivial component of 
$m^{-1}(G_\HH)$.

\subsection{The $SO(3)$-type component $\tau^{-1}(\triangle I_1 I_2 I_3^\prime)$.}
For a triple $(g_1,g_2,g_3) \in \tau^{-1}(\triangle I_1 I_2 I_3^\prime)$
we must have $g_2(I_3)=I_3^\prime \in S$, that is, $g_2$ takes 
$S=S(I_2,I_3)$ to $S=S(I_2,I_3^\prime)$, so that  $g_2\in G_{I_2,S}$, moreover,  
$g_2$ is determined uniquely, up to an element in $G_\HH$.
Then writing $g_1g_2g_3=h \in G_\HH$ we see that $I_3=g_1g_2g_3(I_3)=g_1(I_3^\prime)$,
that is, $g_1$ takes $I_3^\prime \in S$ to $I_3\in S\setminus \{\pm I_1\}$. This, together with
$g_1(I_1)=I_1$  implies that $g_1$ takes $S=S(I_1,I_3)$ to $S=S(I_1,I_3^\prime)$,
that is, $g_1 \in G_{I_1,S}$. Again, such $g_1$ is determined uniquely, up to an element in $G_\HH$.
Finally, $g_3$ must also take $S$ to $S$, so that $g_3\in G_{I_3,S}$ and again, due to the relation 
$g_1g_2g_3 \in G_\HH$, it is determined uniquely, 
up to an element in $G_\HH$.

%
%
The element $g_j$ as above acts as a rotation of $S$ about the corresponding axis $\{\pm I_j\},j=1,2,3$. 
The subgroups of (isometric) rotations $\langle e^{tI_1}|\,t\in\RR\rangle \subset G_{I_1}, 
\langle e^{tI_2}|\, t\in \RR \rangle \subset G_{I_2},\langle e^{tI_3}|\,t\in\RR \rangle  \subset G_{I_3}$
generate a subgroup in $G_S$ isomorphic to $SO(3)$. 
It is clear that fixing the (unique) rotations $e^{t_1I_1}, e^{t_2I_2}$, such that
$I_3^{\prime}=e^{t_1I_1}(I_3)=e^{t_2I_2}(I_3)$, we get that $e^{-t_1I_1}e^{t_2I_2}$ is a rotation in $SO(3)\subset G_S$ 
about the axis $\{\pm I_3\}$. Then we can find
a unique $e^{t_3I_3}$, such that $e^{-t_1I_1}e^{t_2I_2}e^{t_3I_3}=1 \in G_\HH$.

\vspace*{1cm}
\hspace*{4cm}
\input{SO3-corr.pic}
\vspace*{-12.5cm}
\begin{center}
Picture 2: Obtaining a relation among rotations around $I_1,I_2,I_3$.
\end{center}

Thus, allowing $g_1,g_2,g_3$ to be defined up to elements in $G_\HH$, we get
that $\tau^{-1}(\triangle I_1 I_2 I_3^\prime)=e^{-t_1I_1}G_\HH \times e^{t_2I_2}G_\HH
\times e^{t_3I_3}G_\HH$. This is the $SO(3)$-type component from the statement
 of Theorem \ref{Theorem-relation}. 

\subsection{The surjectivity of $\tau \colon m^{-1}(G_\HH) \to \mathcal T_{\alpha,\beta,\gamma}$}

Given a triangle $\triangle I_1 I_2 J_3 \in \mathcal T_{\alpha,\beta,\gamma}$ with
$T( \triangle I_1 I_2 J_3)=T(\triangle  I_1 I_2 I_3)=(\alpha,\beta,\gamma)$, 
we need to find a triple  $(g_1,g_2,g_3) \in m^{-1}(G_\HH)$, such that 
$\tau(g_1,g_2,g_3)= \triangle I_1 I_2 J_3$. 

As we discussed in the introduction, we can find $g_1\in G_{I_1},g_2\in G_{I_2}$ 
such that 
$g_2(S)=S(I_2,J_3)$ and $g_1^{-1}(S)=S(I_1,J_3)$. 
As $G_{J_3}$ acts transitively on the set of twistor lines containing 
$\pm J_3$ (see \cite{Twistor-lines}), we can take $f_3\in G_{J_3}$, such that $f_3(S(I_2,J_3))=S(I_1,J_3)$
(here we do not assume that $f_3$ takes $I_2$ to $I_1$, it is merely an equality of sets), and set $g_3=g_2^{-1}f_3g_2 \in 
G_{g_2^{-1}(J_3)}$. Then, obviously $g_1g_2g_3(S)=S$. 

Here, certainly, $g_2^{-1}(J_3)$ need not be equal the initially fixed $I_3$ and while the product
$g_1g_2g_3$ belongs to $G_S$, it need not be in $G_\HH$. We want to modify the triple $(g_1,g_2,g_3)$
so as to satisfy conditions $g_1g_2g_3 \in G_\HH$, $g_j\in G_{I_j},j=1,2,3$, and 
$\tau(g_1,g_2,g_3)=\triangle I_1 I_2 J_3$. 

Set $\widetilde{I_3}=g_2^{-1}(J_3)$ and set $\widetilde{\widetilde{I_3}}=g_1(J_3)$.
We clearly have that $\arccos(-\beta)=d(I_2,I_3)=d(I_2,\widetilde{I_3})\mbox{ and }
\arccos(-\gamma)=d(I_1,I_3)=d(I_1,\widetilde{\widetilde{I_3}}),$
where $d(\cdot,\cdot)$ denotes the (spherical) distance between the points of $S$, see the picture below.

Now let us choose $e^{t_2I_2}\in G_{I_2,S}$ and $e^{t_1I_1} \in G_{I_1,S}$ such that $e^{t_2I_2}(\widetilde{I_3})=I_3 \mbox{ and }e^{t_1I_1}(\widetilde{\widetilde{I_3}})=I_3.$ Then the modified elements $e^{t_1I_1}g_1\in G_{I_1},g_2e^{-t_2I_2}\in G_{I_2}$
still  lead to the same triple of consecutive twistor lines as $g_1,g_2$ did: $(e^{t_1I_1}g_1)^{-1}(S)=g_1^{-1}(e^{-t_1I_1}(S))=g_1^{-1}(S)=S(I_1,J_3)$
and $g_2e^{-t_2I_2}(S)=g_2(S)=S(I_2,J_3)$. Now we have that $e^{t_2I_2}g_3e^{-t_2I_2}\in G_{I_3}$,
indeed: $e^{t_2I_2}g_3e^{-t_2I_2}(I_3)=e^{t_2I_2}g_3(\widetilde{I_3})=e^{t_2I_2}(\widetilde{I_3})=I_3$.
 Obviously the element $e^{t_1I_1}g_1 \cdot g_2e^{-t_2I_2}\cdot e^{t_2I_2}g_3e^{-t_2I_2}$ 
takes $S$ to itself,
moreover,  
$e^{t_1I_1}g_1 \cdot g_2e^{-t_2I_2}\cdot e^{t_2I_2}g_3e^{-t_2I_2}(I_3)=
e^{t_1I_1}g_1 \cdot g_2e^{-t_2I_2}(I_3)=
e^{t_1I_1}g_1 \cdot g_2(\widetilde{I_3})=e^{t_1I_1}(\widetilde{\widetilde{I_3}})=I_3$. This means
that our product $e^{t_1I_1}g_1 \cdot g_2e^{-t_2I_2}\cdot e^{t_2I_2}g_3e^{-t_2I_2}=e^{t_3I_3}h
= he^{t_3 I_3} \in G_{I_3,S}$ for an appropriate $h \in G_\HH$. 
 So for the ``corrected'' elements $e^{t_1I_1}g_1 \in G_{I_1},g_2e^{-t_2I_2} \in G_{I_2},
e^{t_2I_2}g_3e^{-t_2I_2}e^{-t_3I_3}\in G_{I_3}$ their product   $(e^{t_1I_1}g_1) \cdot (g_2e^{-t_2I_2})\cdot (e^{t_2I_2}g_3e^{-t_2I_2}e^{-t_3I_3})=h$ belongs to $G_\HH$
and they map under $\tau$ to the twistor triangle $\triangle I_1 I_2 J_3$. 

That is, for an arbitrary twistor triangle $\triangle I_1I_2J_3 \in \mathcal T_{\alpha,\beta,\gamma}$ 
 we can find $(g_1,g_2,g_3)\in m^{-1}(G_\HH)$ such that $\tau(g_1,g_2,g_3)=\triangle I_1I_2J_3$. 

\vspace*{0.2cm}
\hspace*{2cm}
\input{triangle-relation-2.pic}
\vspace*{-10.50cm}
\begin{center}
Picture 3: modifying the triple $(g_1,g_2,g_3)$ in order to get $g_1g_2g_3\in G_\HH$.
\end{center}
%

\subsection{The $G_\HH^3$-structure of  the fiber $\tau^{-1}(\triangle I_1 I_2 J_3)$.} 
\label{Subsec-individual-fiber}
Given an arbitrary 
nondegenerate triangle $\triangle I_1 I_2 J_3 \in \mathcal T_{\alpha,\beta,\gamma}$ we want to describe $\tau^{-1}(\triangle I_1 I_2 J_3)$. 

As we already know, $\tau^{-1}(\triangle I_1 I_2 J_3)$ is non-empty, so let us choose
$(f_1,f_2,f_3)\in \tau^{-1}(\triangle I_1 I_2 J_3)$. 
Let us now show that $$\tau^{-1}(\triangle I_1 I_2 J_3)=\{(h_1f_1, f_2h_2^{-1}, h_2f_3h_3^{-1})|h_1,h_2,h_3\in G_\HH\}\cong G_\HH^3,$$
the orbit under the action of $G_\HH^3$ defined by 
$(h_1,h_2,h_3)\cdot (f_1,f_2,f_3)=(h_1f_1, f_2h_2^{-1}, h_2f_3h_3^{-1})$. 
The inclusion `$\supset$' is obvious, so we only need to show the inclusion `$\subset$',
that is, given $(g_1,g_2,g_3)\in \tau^{-1}(\triangle I_1 I_2 J_3)$ we need to find $h_1,h_2,h_3 \in G_\HH$
such that $(g_1,g_2,g_3)=(h_1f_1, f_2h_2^{-1}, h_2f_3h_3^{-1})$.


We recall that, as every twistor line is uniquely identified by its any two non-proportional points,
the condition that $(f_1,f_2,f_3),(g_1,g_2,g_3)\in \tau^{-1}(\triangle I_1 I_2 J_3)$, implying that
$f_2(I_3)=g_2(I_3)$ tells us that we have set-theoretic equalities $f_2(S)=g_2(S), f_2(f_3(S))=g_2(g_3(S))$. 
We note that as $f_1^{-1}(S)=f_2(f_3(S))=g_2(g_3(S))=g_1^{-1}(S)$, 
we have that $g_1f_1^{-1}(S)=S$ and as $g_1f_1^{-1} \in G_{I_1}$ we have that
$g_1=h_1e^{t_1I_1}f_1=h_1f_1e^{t_1I_1}$ for $h_1\in G_\HH$ and an appropriate $t_1 \in \RR$. 
Next, we certainly have that $f_2(S)=g_2(S)$, 
which analogously gives that  $g_2^{-1}f_2(S)=S$, hence $g_2^{-1}f_2=e^{t_2I_2}h_2$
for an appropriate $h_2\in G_\HH$ and $t_2\in \RR$,                
so that $g_2=f_2h_2^{-1}e^{-t_2I_2}$.
At the same time, as $\tau(f_1,f_2,f_3)=\tau(g_1,g_2,g_3)$ and so $f_2(I_3)=g_2(I_3)$, 
we see that $g_2^{-1}f_2(I_3)= I_3$,
and then $g_2^{-1}f_2 \in G_{I_2}\cap G_{I_3}=G_\HH$, that is, $t_2=0$ and $g_2=f_2h_2^{-1}$,

Now let us figure how much freedom the choice of $g_3$ has with respect to $f_3$.
For that we look at the set-theoretic equality $f_2(f_3(S))=g_2(g_3(S))$
and use that $g_2=f_2h_2^{-1}$.  

The equality  $g_2(g_3(S))=f_2h_2^{-1}(g_3(S))=f_2(f_3(S))$
implies that $g_3^{-1}h_2f_3= e^{t_3I_3}h_3\in G_{I_3,S}$ for appropriate $t_3 \in \RR$ and $h_3\in G_\HH$, so that $g_3=h_2f_3h_3^{-1}e^{-t_3I_3}$. 
The condition $(f_1,f_2,f_3),(g_1,g_2,g_3)\in m^{-1}(G_\HH)$ 
translates into the equality of mappings $Id_S=g_1g_2g_3=h_1f_1e^{t_1I_1}\cdot f_2h_2^{-1} \cdot h_2f_3h_3^{-1}e^{-t_3I_3}=h_1f_1e^{t_1I_1}\cdot f_2f_3h_3^{-1}e^{-t_3I_3}$. 
Now, as $f_1f_2f_3=h\in G_\HH$ we have that $f_2f_3=f_1^{-1}h$
so that replacing $f_2f_3$ with this expression in the formula for $g_1g_2g_3$
we get that $g_1g_2g_3=h_1f_1e^{t_1I_1}\cdot f_1^{-1}hh_3^{-1}\cdot e^{-t_3I_3}
=h_1hh_3^{-1}e^{t_1I_1}e^{-t_3I_3}=Id_S$. From this we see that
we must already have $e^{t_1I_1}e^{-t_3I_3}=Id_S$, so that 
$e^{t_1I_1}=Id_S,e^{t_3I_3}=Id_S$, which immediately specifies $t_1=t_3=0$. 
So finally $(g_1,g_2,g_3)=(h_1f_1, f_2h_2^{-1}, h_2f_3h_3^{-1})$, 
concluding that the set $\tau^{-1}(\triangle I_1 I_2 J_3)$ is diffeomorphic to 
$G_\HH \times G_\HH \times G_\HH$. 

\medskip

\subsection {The fiber $\tau^{-1}(G_\HH\cdot \triangle I_1 I_2 J_3)$}
For any $g \in G_\HH$ and any $\triangle I_1 I_2 J_3 \in \mathcal T_{\alpha,\beta,\gamma}$ 
we have $g\cdot \triangle I_1 I_2 J_3=\triangle g(I_1)g(I_2)g(J_3)
=\triangle I_1 I_2 g(J_3)$, 
so that the orbit $G_\HH\cdot \triangle I_1 I_2 J_3$ is the subset $\{\triangle I_1 I_2 g(J_3)\,|\,g \in G_\HH\}
\subset \mathcal T_{\alpha,\beta,\gamma}$. 
As we have seen in \ref{Subsec-individual-fiber}, the preimage of an individual triangle 
$\tau^{-1}(\triangle I_1 I_2 J_3)=
\{(h_1f_1, f_2h_2^{-1}, h_2f_3h_3^{-1})|h_1,h_2,h_3\in G_\HH\}$, where $(f_1,f_2,f_3)$ 
is an arbitrary point in $\tau^{-1}(\triangle I_1 I_2 J_3)$, has an obvious structure
of the homogeneous space diffeomorphic to $G_\HH^3$. 
The preimage  $\tau^{-1}(g\cdot \triangle I_1 I_2 J_3), g \in G_\HH,$ contains the point 
$g\cdot (f_1,f_2,f_3)=(gf_1g^{-1},gf_2g^{-1},gf_3g^{-1})$
and so, again by \ref{Subsec-individual-fiber}, we can write $$\tau^{-1}(g\cdot \triangle I_1 I_2 J_3)=\{(h_1gf_1g^{-1}, gf_2g^{-1}h_2^{-1}, h_2gf_3g^{-1}h_3^{-1})|h_1,h_2,h_3\in G_\HH\}=$$ $$=
\{(h_1f_1g^{-1}, gf_2h_2^{-1}, h_2f_3h_3^{-1})|h_1,h_2,h_3\in G_\HH\}.$$
Then we have $$\tau^{-1}(G_\HH\cdot \triangle I_1 I_2 J_3)=\underset{g\in G_\HH}{\bigcup}
\tau^{-1}(g\cdot \triangle I_1 I_2 J_3)=$$ $$
=\{(h_1f_1g^{-1}, gf_2h_2^{-1}, h_2f_3h_3^{-1})|g,h_1,h_2,h_3\in G_\HH\},$$
so that the preimage $\tau^{-1}(G_\HH\cdot \triangle I_1 I_2 J_3)$ has an obvious structure of
a homogeneous $G^4_\HH$-manifold,
namely the action is defined by $$(h_1,g,h_2,h_3)\cdot (f_1,f_2,f_3)=(h_1f_1g^{-1}, gf_2h_2^{-1}, h_2f_3h_3^{-1}).$$ Thus, up to renaming the entries of tuples in $G_\HH^4$, it is a connected subset in $m^{-1}(G_\HH)$ of the form specified in
Theorem \ref{Theorem-relation}. In order to
determine the diffeomorphism type of the orbit $G_\HH^4\cdot (f_1,f_2,f_3)$ for an arbitrary $(f_1,f_2,f_3)\in \tau^{-1}(G_\HH\cdot \triangle I_1 I_2 J_3)$ we need to find the stabilizer of that point.  
The stabilizer, by definition of the action,
consists of those 4-tuples $(h_1,g,h_2,h_3)\in G_\HH^4$ which satisfy
$(h_1,g,h_2,h_3)\cdot (f_1,f_2,f_3)=(h_1f_1g^{-1}, gf_2h_2^{-1}, h_2f_3h_3^{-1})=(f_1,f_2,f_3)$.

Equating the first entries, we get  $h_1f_1g^{-1}=f_1$ or $g=f_1^{-1}h_1f_1$, thus
$g \in G_\HH \cap f_1^{-1}G_\HH f_1$. As $f_1^{-1}(I_1)=I_1$, $f_1^{-1}(I_3)=f_2f_3(I_3)=f_2(I_3)=J_3$, we see that $g$ must pointwise stabilize $I_1,I_2$ and $J_3=f_1^{-1}(I_3)$,
so that in the end it stabilizes pointwise the subalgebra $\mathcal H(I_1,I_2,J_3)\subset End\,V_\RR$
under the $G$-action. 
Further we denote the pointwise $G$-action stabilizer of  $\mathcal H(I_1,I_2,J_3)$ by $G_{\mathcal H(I_1,I_2,J_3)}\subset G_\HH$. 
For any $g \in G_{\mathcal H(I_1,I_2,J_3)}$ we can determine, in a unique way, 
the respective $h_1=f_1gf_1^{-1}\in G_\HH$.

Equating the second entries we get $gf_2h_2^{-1}=f_2$, that is, $g\in G_\HH \cap f_2 G_\HH f_2^{-1}$. This condition is equivalent to $g$ stabilizing pointwise $I_1,I_2$ and $J_3=f_2(I_3)$,
that is, $g\in  G_{\mathcal H(I_1,I_2,J_3)}$, so that no further restriction is added, and, again, for any such $g$ we uniquely determine 
$h_2=f_2^{-1}g f_2 \in G_\HH$. 

Equating the third entries and using the previously determined $h_2$ we get that $h_3=f_3^{-1}h_2f_3=f_3^{-1}\cdot f_2^{-1}g f_2 \cdot f_3$.
As we know that $(f_1,f_2,f_3)\in m^{-1}(G_\HH)$, so that $f_1f_2f_3=h\in G_\HH$, we
get $f_2f_3=f_1^{-1}h$, and then, using the previously determined $h_1$,  
we get $h_3=h^{-1} \cdot f_1 g f_1^{-1}\cdot  h=h^{-1}h_1h\in G_\HH$. 

Thus, $Stab_{G_\HH^4}(f_1,f_2,f_3)=\{(g,f_1gf_1^{-1}, f_2^{-1}g f_2, f_3^{-1}f_2^{-1}g f_2f_3)\,|\, g \in G_{\mathcal H(I_1,I_2,J_3)}\} \cong G_{\mathcal H(I_1,I_2,J_3)}$  
and $\tau^{-1}(G_\HH\cdot \triangle I_1 I_2 J_3)\cong G_\HH^4/G_{\mathcal H(I_1,I_2,J_3)}$,
here we identify $G_{\mathcal H(I_1,I_2,J_3)}$ with its image in $G_\HH^4$
under the above specified isomorphism $G_{\mathcal H(I_1,I_2,J_3)}\cong Stab_{G_\HH^4}(f_1,f_2,f_3)$. 

Now $\mathcal T_{\alpha,\beta,\gamma}$ is a union of $n+1$ distinct orbits 
of the form $G_\HH\cdot \triangle I_1 I_2 J_3$. As $G_\HH$ is connected, 
and each of the fibers of $\tau$ over such an orbit is diffeomorphic to $ G_\HH^4/G_{\mathcal H(I_1,I_2,J_3)}$, we see that the fibers of $\tau$ are connected subsets of $m^{-1}(G_\HH)$. 

\subsection{The connected components of $m^{-1}(G_\HH)$}

First, we want to show  that the 
 (proved to be connected) fibers $\tau^{-1}(G_\HH \cdot \triangle I_1 I_2 J_3)$
are the connected components of $m^{-1}(G_\HH)$.  Second,
we will calculate the dimension of the connected components. 

For that it is sufficient to show that the $G_\HH$-orbits form the set of connected 
components of $\mathcal T_{\alpha,\beta,\gamma}$. This  
follows from the fact that $G_\HH$-orbits form the set of connected components of 
the topological space $Rep_{I_1,I_2}(\mathcal H_{\alpha,\beta,\gamma})$, which is 
$G_\HH$-equivariantly isomorphic to  $\mathcal T_{\alpha,\beta,\gamma}$, the isomorphism
$Rep_{I_1,I_2}(\mathcal H_{\alpha,\beta,\gamma}) \cong \mathcal T_{\alpha,\beta,\gamma}$ is
given by $\rho \mapsto \triangle I_1 I_2 \rho(I_3)$. 
Indeed,  each representation $$\rho \in Rep_{I_1,I_2}(\mathcal H_{\alpha,\beta,\gamma}), \rho\colon \mathcal H_{\alpha,\beta,\gamma} \rightarrow End\,V_\RR,   
\rho(e_1)=I_1, \rho(e_2)=I_2,$$ is uniquely determined by $\rho(e_3)$, or, which is equivalent, by
$\rho(c)=Id_{\RR^{4k}}\oplus -Id_{\RR^{4(n-k)}}$, see Theorem \ref{Main-theorem}, where $c$ is a the choice of a central element, satisfying $c^2=1$, 
$k=\frac{1}{8}(Tr(\rho(c))+4n)$ (such representation $\rho$ is $G_\HH$-isomorphic to $\rho_k$ defined in the introduction). 
Any two representations $\rho_1,\rho_2 \in Rep_{I_1,I_2}(\mathcal H_{\alpha,\beta,\gamma})$
are $G_\HH$-equivalent if and only if $Tr(\rho_1(c))=Tr(\rho_2(c))$. Thus 
the set $Rep_{I_1,I_2}(\mathcal H_{\alpha,\beta,\gamma})$ is a disjoint union of
its $n+1$ closed $G_\HH$-orbits $$\{\rho\in Rep_{I_1,I_2}(\mathcal H_{\alpha,\beta,\gamma})\,|\,Tr(\rho(c))=4(2k-n)\},k=0,\dots,n.$$ Hence, as $G_\HH$ is connected, the $G_\HH$-orbits form the set of connected components
of $\mathcal T_{\alpha,\beta,\gamma}\cong Rep_{I_1,I_2}(\mathcal H_{\alpha,\beta,\gamma})$,
therefore the fibers $\tau^{-1}(G_\HH \cdot \triangle I_1 I_2 J_3)$ are the connected components of $m^{-1}(G_\HH)$. 
Besides that, from the identification 
$Rep_{I_1,I_2}(\mathcal H_{\alpha,\beta,\gamma}) \cong \mathcal T_{\alpha,\beta,\gamma}$ we obtain that each connected component of 
$m^{-1}(G_\HH)=\tau^{-1}(\mathcal T_{\alpha,\beta,\gamma})$ has the stated form $$
\{(f_1,f_2,f_3)\in m^{-1}(G_\HH)\,|\,f_2(I_3) \in G_\HH \cdot \rho_k(e_3)\}$$
for an appropriate $k$, $0\leqslant k \leqslant n$.

The dimension of $ G_\HH^4/G_{\mathcal H(I_1,I_2,J_3)}$ is determined
from the fact that $g \in G$ stabilizes the subalgebra $\mathcal H(I_1,I_2,J_3) \subset End\, V_\RR$
if and only if it stabilizes 
the representation $\rho\colon \mathcal H_{\alpha,\beta,\gamma} \rightarrow End\,V_\RR$ given by  
$\rho(e_1)=I_1, \rho(e_2)=I_2,\rho(e_3) =  J_3$, or, equivalently, $g$ stabilizes $I_1,I_2$
together with  $\rho(c)=Id_{\RR^{4k}}\oplus -Id_{\RR^{4(n-k)}}$. 

The condition of centralizing $\rho(c)$ cuts out the subgroup $G_{\mathcal H(I_1,I_2,J_3)}= G_{\HH,\rho_k}$ of dimension $4k^2+4(n-k)^2$ in $G_\HH$. 
That is,  $\dim\,G_\HH^4/G_{\mathcal H(I_1,I_2,J_3)}=4 \cdot 4n^2-(4k^2+4(n-k)^2)=12n^2+8nk-8k^2$.
This dimension takes its smallest value $12n^2=\dim\, G_\HH^3$ exactly when $k=0$ or $k=n$,
that is, when $J_3=I_3$ or $J_3=I_3^\prime$. 

\bigskip
What is left now is the calculation comparing the angles of the triangle $\triangle I_1 I_2J_3$
to those of $\triangle I_1 I_2 I_3$. 
\subsection{Triangles: comparison of angles.}

Given a twistor triangle $\triangle I_1 I_2 J_3$
with $T(\triangle I_1 I_2 J_3)=(\alpha,\beta,\gamma)$, we have the relation between the 
corresponding complex structures: $I_1I_2+I_2I_1=2\alpha Id$, $I_1J_3+J_3I_1=2\gamma Id$, 
$I_2J_3+J_3 I_2=2\beta Id$. 
The complex structures $\frac{\alpha I_1+I_2}{\sqrt{1-\alpha^2}}\in S(I_1,I_2)$
and $\frac{\gamma I_1+J_3}{\sqrt{1-\gamma^2}}\in S(I_1,J_3)$ anticommute with
 $I_1$. Then we can write $$T_{I_1}S(I_1,I_2)=\left \langle \frac{\alpha I_1+I_2}{\sqrt{1-\alpha^2}}, 
I_1\frac{\alpha I_1+I_2}{\sqrt{1-\alpha^2}}\right \rangle, T_{I_1}S(I_1,J_3)=\left\langle \frac{\gamma I_1+J_3}{\sqrt{1-\gamma^2}}, 
I_1\frac{\gamma I_1+J_3}{\sqrt{1-\gamma^2}}\right\rangle.$$
This spaces obviously have trivial intersection. Set $W=T_{I_1}S(I_1,I_2) \oplus T_{I_1}S(I_1,J_3)$.
Consider the values of the form $q(x,y)=-\frac{1}{4n}Tr(x\cdot y) \colon W \times W \rightarrow \RR$
for the unit vectors in the tangent planes. Setting $c_t=\cos t \cdot \frac{\alpha I_1+I_2}{\sqrt{1-\alpha^2}}+ \sin t \cdot \frac{-\alpha Id+I_1I_2}{\sqrt{1-\alpha^2}}$
and $d_s=\cos s \cdot \frac{\gamma I_1+J_3}{\sqrt{1-\gamma^2}}+ \sin s \cdot 
\frac{-\gamma Id+I_1J_3}{\sqrt{1-\gamma^2}}$ we consider
$q(c_t, d_s),$
which is equal to the trace of the following
$$-\frac{1}{4n\sqrt{1-\alpha^2}\sqrt{1-\gamma^2}}\biggl(\cos t\cos s\cdot (-\alpha\gamma Id+\alpha I_1J_3$$
$$+\gamma I_2I_1 +I_2J_3)+$$
$$+\cos t\sin s (-\alpha\gamma I_1 -\alpha J_3-\gamma I_2+I_2I_1J_3)+$$
$$+\sin t\cos s(-\alpha\gamma I_1 -\alpha J_3+\gamma I_1 I_2 I_1 + I_1 I_2 J_3)+$$
$$+\sin t \sin s(\alpha\gamma Id-\alpha I_1 J_3 -\gamma I_1I_2+I_1 I_2 I_1 J_3)\biggr).$$

We know that traces of complex structures are zeroes and we know traces of all products of pairs
of distinct complex structures, like $I_1I_2, I_1J_3, I_2J_3$. The trace of $I_1I_2I_1=-I_1I_2I_1^{-1}$
is zero, and we have  $I_2I_1J_3=(-I_1I_2+2\alpha Id)J_3=-I_1I_2J_3+2\alpha J_3$, 
$I_1I_2I_1J_3=I_1(-I_1I_2+2\alpha Id)J_3=I_2J_3+2\alpha I_1 J_3$,
so what remains to calculate is the trace of $I_1 I_2 J_3$.

As in general the natural representation  $\rho\colon \mathcal H=\mathcal H(I_1,I_2,J_3) \rightarrow
End\,V_\RR$ decomposes, by Theorem \ref{Main-theorem}, as $\rho=k \rho_1\oplus l \rho_2$ for certain $k,l$, setting $m=4(l-k)$ and $c=\beta e_1-\gamma e_2 +\alpha e_3-e_1e_2e_3, r=\sqrt{-\det\,Q_{\alpha,\beta,\gamma}}$, so that $\rho_1(c)=rId_{\RR^4}, \rho_2(c)=-rId_{\RR^4}$, 
we have that
$Tr(I_1 I_2 J_3)=-Tr(\beta I_1-\gamma I_2 +\alpha J_3-I_1I_2J_3)=-Tr(k\rho_1(c)\oplus l\rho_2(c))=mr$.  

Now we calculate the above value of $q(c_t, d_s)$, $$q(c_t, d_s)=-\frac{1}{\sqrt{1-\alpha^2}\sqrt{1-\gamma^2}}
\left((\beta+\alpha\gamma)\cos(t-s)+ \frac{m}{4n}r\cdot\sin(t-s)\right).$$
Next,  if $m=0$, which is equivalent to $\rho$ being balanced, then $|q(c_t, d_s)|< 1$,
(so that the form $q$ is indeed positively definite on $W$) and the maximal value 
of $|q(c_t, d_s)|$ is attained for $t=s$,
it is equal to $\left|\frac{\beta+\alpha\gamma}{\sqrt{1-\alpha^2}\sqrt{1-\gamma^2}}\right|$, 
which, by the spherical cosine law, equals $\pm \cos \angle I_2I_1I_3$. The analogous computations can be done for the other two angles of $\triangle I_1I_2J_3$ showing that these angles are well defined and are equal to
the respective angles of $\triangle I_1 I_2 I_3$, up to taking complements to $\pi$.


\section{Appendix}
\label{Appendix}
In this section we prove the most nontrivial parts of the statement
of Proposition \ref{Prop-properties-of-H}.

\subsection{$S|_V=q|_V,  S|_{\widetilde{V}}=q|_{\widetilde{V}}$}



The part $S|_V=q|_V$ is really trivial and follows from the definition of $q$ and the relations
of the algebra $\mathcal H$.

Let us get to showing $S|_{\widetilde{V}}=q|_{\widetilde{V}}$. 
We easily see that 
$S(\alpha-e_1e_2)=\alpha^2-2\alpha e_1e_2+e_1 (e_2e_1)e_2=
\alpha^2-2\alpha e_1e_2+e_1 (-e_1e_2+2\alpha)e_2=\alpha^2-1=
q(\alpha-e_1e_2)$ and similarly for other basis elements of $\widetilde{V}$. 
We need to check the equality $S(v)=q(v)$ for the general elements $v\in \widetilde{V}$, for which now it  suffices
to check that mixed symmetric products of the kind $(\alpha-e_1e_2)(\beta-e_2e_3)+(\beta-e_2e_3)(\alpha-e_1e_2)$ land in $\RR\cdot Id$. Indeed,
$(\alpha-e_1e_2)(\beta-e_2e_3)+(\beta-e_2e_3)(\alpha-e_1e_2)=
2\alpha\beta-2\beta e_1e_2 -2 \alpha e_2 e_3 +e_1e_2\cdot e_2e_3+e_2e_3\cdot e_1e_2
=
2\alpha\beta-2\beta e_1e_2 -2 \alpha e_2 e_3 -e_1e_3+(-e_3e_2+2\beta) e_1e_2=
2\alpha\beta -2 \alpha e_2 e_3 -e_1e_3-e_3e_2e_1e_2=
2\alpha\beta -2 \alpha e_2 e_3 -e_1e_3-e_3(-e_1e_2+2\alpha)e_2=
2\alpha\beta -2 \alpha (e_2 e_3+e_3e_2) -e_1e_3-e_3e_1=2\alpha\beta-4\alpha\beta-2\gamma=
-2(\alpha\beta+\gamma) \in \RR$ and similarly for other pairs of basis elements.

\medskip

\subsection{ $Tr\, \rho_{reg}(e_1e_2e_3)=0$}
\label{subsec-triple-trace}
For the trace calculation we consider the basis $1,e_1,e_2,e_3,e_1e_2,e_2e_3,e_3e_1,e_1e_2e_3$ of $\mathcal H$.
Then 

$e_1e_2e_3\cdot 1=e_1e_2e_3$, 

$e_1e_2e_3\cdot e_1=e_1e_2(-e_1e_3+2\gamma)=-e_1(-e_1e_2+2\alpha)e_3+2\gamma e_1e_2
=-2\alpha e_1e_3 - e_2e_3+2\gamma e_1e_2$,

$e_1e_2e_3\cdot e_2=e_1e_2(-e_2e_3+2\beta)=e_1e_3+2\beta e_1e_2$,

$e_1e_2e_3\cdot e_3=-e_1e_2$,

$e_1e_2e_3\cdot e_1e_2=e_1e_2(-e_1e_3+2\gamma)e_2=-e_1(-e_1e_2+2\alpha)e_3e_2-2\gamma e_1
=-e_2e_3e_2-2\alpha e_1e_3e_2-2\gamma e_1=-(-e_3e_2+2\beta)e_2-
2\alpha e_1(-e_2e_3+2\beta)-2\gamma e_1=-(2\gamma+4\alpha\beta) e_1-2\beta e_2-e_3+2\alpha e_1e_2e_3$,

$e_1e_2e_3\cdot e_2e_3=e_1e_2(-e_2e_3+2\beta)e_3=-e_1+2\beta e_1e_2e_3$,

$e_1e_2e_3\cdot e_3e_1=-e_1(-e_1e_2+2\alpha)=-2\alpha e_1-e_2$,

$e_1e_2e_3\cdot e_1e_2e_3=e_1e_2(-e_1e_3+2\gamma)e_2e_3=-e_1(-e_1e_2+2\alpha)e_3e_2e_3
-2\gamma e_1e_3=-e_2(-e_2e_3+2\beta)e_3-2\alpha e_1(-e_2e_3+2\beta)e_3-2\gamma e_1e_3
= 1 -2\alpha e_1e_2-2\beta e_2e_3 -(2\gamma+4\alpha\beta)e_1e_3$,
which finally shows that for every element of our basis $x$ the result of the left multiplication  
$e_1e_2e_3\cdot x$ never contains a nonzero $x$-component, so that $Tr(\rho_{reg}(e_1e_2e_3))=0$.

\medskip

\subsection{\bf The orthogonal decomposition $\mathcal H=\RR\cdot 1 \oplus V \oplus 
\widetilde{V}  \oplus \RR\cdot c$.}
We recall that $c=\beta e_1-\gamma e_2 +\alpha e_3-e_1 e_2 e_3$.
From  \ref{subsec-triple-trace} we get $1\perp c$. The orthogonality of 1 to the rest is clear, moreover, for the anticommuting 
pairs of elements of bases of $V$ and $\widetilde{V}$ we get $e_1 \perp \alpha-e_1e_2,\gamma-e_3e_1$ and similarly for $e_2,e_3$. The fact that for non-anticommuting pairs 
like $e_1,\beta-e_2e_3$ their symmetric product $e_1(\beta-e_2e_3)+(\beta-e_2e_3)e_1$ lands in 
$\RR\cdot c$ (see an explicit calculation of that in \ref{subsec-square-map}) implies that even for non-anticommuting pairs we have the orthogonality, 
$e_1\perp \beta-e_2e_3$ and similarly for $e_2,e_3$. 

Next, the orthogonality $c\perp V$
means that we need to check that $Tr(\rho_{reg}(e_j\cdot c))=0$, for example,
$e_1(\beta e_1-\gamma e_2 +\alpha e_3-e_1e_2e_3)=-\beta-\gamma e_1e_2 +\alpha e_1e_3+e_2e_3$
and the corresponding trace of this element is zero. 

The orthogonality $c\perp \widetilde{V}$ means that we need to check
that $Tr(\rho_{reg}(\alpha-e_1e_2)\cdot c)=0$ etc. Here
$(\alpha-e_1e_2)\cdot (\beta e_1-\gamma e_2 +\alpha e_3-e_1e_2e_3)=
\alpha\cdot c-\beta e_1(-e_1e_2+2\alpha)-\gamma e_1 -\alpha e_1e_2e_3 +e_1e_2\cdot e_1e_2e_3
=\alpha \cdot c -\beta e_2-(2\alpha\beta +\gamma)e_1-\alpha e_1e_2e_3 +
e_1(-e_1e_2+2\alpha)e_2e_3=
\alpha \cdot c -\beta e_2-(2\alpha\beta +\gamma)e_1-e_3+\alpha e_1e_2e_3$
and the corresponding trace of the latter element is clearly zero.

\medskip

\subsection{\bf The centrality of $c$} First, in order to see that $c$ indeed belongs to the center $\mathcal Z(\mathcal H)$
it is necessary and sufficient to check that $ce_i=e_ic, i=1,2,3$. Let us check that
$ce_1=e_1c$, the other cases are done similarly.

We have $ce_1=(\beta e_1-\gamma e_2+\alpha e_3 -e_1e_2e_3)e_1=\beta e_1e_1-\gamma (-e_1e_2+
2\alpha)+\alpha (-e_1e_3+2\gamma)-e_1e_2e_3e_1
=e_1(\beta e_1+\gamma e_2-\alpha e_3-e_2e_3e_1)$.
Now $e_2e_3e_1=e_2(-e_1e_3+2\gamma)=-(-e_1e_2+2\alpha)e_3+2\gamma e_2
=e_1e_2e_3+2\gamma e_2-2\alpha e_3$, so that 
$ce_1=e_1(\beta e_1+\gamma e_2-\alpha e_3-(e_1e_2e_3+2\gamma e_2-2\alpha e_3))
=e_1(\beta e_1-\gamma e_2+\alpha e_3 -e_1e_2e_3)=e_1c$.

\medskip

\subsection{\bf The center of $\mathcal H$.}  We know already that, for $c=\beta e_1-\gamma e_2+\alpha e_3 -e_1e_2e_3$ as above, 
we have that $\langle 1,c\rangle \subset \mathcal Z(\mathcal H)$.
Now, for an element $z \in \mathcal H$, $z=x_0\cdot 1+x_1e_1+x_2e_2+x_3e_3+
y_{12}(\alpha-e_1e_2)+y_{23}(\beta-e_2e_3)+y_{31}(\gamma-e_3e_1)+x_4\cdot c$, 
$x_i,y_{ij} \in \RR$, to be in the center means $ze_i=e_iz$ for $i=1,2,3$.
Let us check when there exist such $z$.

As the part $x_0\cdot 1+x_4\cdot c$ is already in $ \mathcal Z(\mathcal H)$, we may assume
that $x_0=x_4=0$. Now the difference
$e_1z-ze_1=x_2(e_1e_2-e_2e_1)+x_3(e_1e_3-e_3e_1)+y_{12}(e_2+e_1e_2e_1)
+y_{23}(-e_1e_2e_3+e_2e_3e_1)+y_{31}(-e_1e_3e_1-e_3)=
 -2x_2(\alpha-e_1e_2)+2x_3(\gamma-e_3e_1)+2y_{12}(e_2+\alpha e_1)
+y_{23}(-e_1e_2e_3+e_2(-e_1e_3+2\gamma))-2y_{31}(e_3+\gamma e_1)=
-2x_2(\alpha-e_1e_2)+2x_3(\gamma-e_3e_1)+2y_{12}(e_2+\alpha e_1)
+y_{23}(-e_1e_2e_3-(-e_1e_2+2\alpha)e_3+2\gamma e_2)-2y_{31}(e_3+\gamma e_1)=
-2x_2(\alpha-e_1e_2)+2x_3(\gamma-e_3e_1)+2y_{12}(e_2+\alpha e_1)
+y_{23}(-e_1e_2e_3+e_1e_2e_3-2\alpha e_3+2\gamma e_2)-2y_{31}(e_3+\gamma e_1)=
-2x_2(\alpha-e_1e_2)+2x_3(\gamma-e_3e_1)+2y_{12}(e_2+\alpha e_1)
+2y_{23}(\gamma e_2-\alpha e_3)-2y_{31}(e_3+\gamma e_1)=
-2x_2(\alpha-e_1e_2)+2x_3(\gamma-e_3e_1)+2(y_{12} \alpha-y_{31}\gamma)e_1  
+2(y_{12}+y_{23}\gamma) e_2 - 2(y_{23}\alpha+y_{31}) e_3
$ equals zero if and only if $x_2=x_3=0$ and $y_{12}\alpha-y_{31}\gamma=0, y_{12}=-y_{23}\gamma$
and $y_{31}=-y_{23}\alpha$. Note that the last two equalities trivially imply the first of the three 
last equalities, so that we see that $z=x_1e_1+y_{23}(-\gamma(\alpha- e_1e_2)+(\beta-e_2e_3)-\alpha(\gamma-e_3e_1))$. Next, commutation with $e_2,e_3$ means that $x_1=0$
and that $z=y_{23}(-\gamma(\alpha- e_1e_2)+(\beta-e_2e_3)-\alpha(\gamma-e_3e_1))
=y_{31}(-\beta(\alpha-e_1e_2)-\alpha(\beta-e_2e_3)+(\gamma-e_3e_1))=
y_{12}((\alpha-e_1e_2)-\gamma(\beta-e_2e_3)-\beta(\gamma-e_3e_1))$,
that is, if $z\neq 0$ then we must have that the following matrix
\[\left(\begin{array}{ccc}
1 & -\gamma & -\beta\\
-\gamma & 1 & -\alpha \\
-\beta & -\alpha & 1
\end{array}\right)
\]
is of rank 1. This is precisely the case when the form $Q$ has signature $(0,1,2)$
($q$ has signature $(1,1,6)$), that is, $|\alpha|=|\beta|=|\gamma|=1$ and $\gamma=-\alpha\beta$.

In this case the center of $\mathcal H$ is spanned by $1,c$ and $z$.

\medskip

\subsection{\bf Equality $c^2=-\det\,Q_{\alpha,\beta,\gamma}$.} Now let us calculate $c^2=(\beta e_1-\gamma e_2+\alpha e_3 -e_1e_2e_3)(\beta e_1-\gamma e_2+\alpha e_3 -e_1e_2e_3)=
-\beta^2-\gamma^2-\alpha^2-\beta\gamma(e_1e_2+e_2e_1)-\alpha\gamma(e_2e_3+e_3e_2)
+\alpha\beta(e_1e_3+e_3e_1)-\beta(e_1\cdot e_1e_2e_3+e_1e_2e_3\cdot e_1)+\gamma(e_2\cdot e_1e_2e_3
+e_1e_2e_3\cdot e_2)-\alpha(e_3\cdot e_1e_2e_3+e_1e_2e_3\cdot e_3)+(e_1e_2e_3)^2=
-\alpha^2-\beta^2-\gamma^2-2\alpha\beta\gamma-\beta(-e_2e_3+(-e_2e_1+2\alpha)e_3e_1)
+\gamma((-e_1e_2+2\alpha)e_2e_3+e_1e_2(-e_2e_3+2\beta))-\alpha((-e_1e_3+2\gamma)e_2e_3-e_1e_2)+(e_1e_2e_3)^2=
-\alpha^2-\beta^2-\gamma^2-2\alpha\beta\gamma-\beta(-e_2e_3-e_2(-e_3e_1+2\gamma)e_1+2\alpha e_3e_1)
+\gamma(e_1e_3+2\alpha e_2e_3+e_1e_3+2\beta e_1e_2)-\alpha(-e_1(-e_2e_3+2\beta)e_3+2\gamma e_2e_3-e_1e_2)+(e_1e_2e_3)^2=
-\alpha^2-\beta^2-\gamma^2-2\alpha\beta\gamma-\beta(-2e_2e_3-2\gamma e_2e_1+2\alpha e_3e_1)
+\gamma(2e_1e_3+2\alpha e_2e_3+2\beta e_1e_2)-\alpha(-2e_1e_2-2\beta e_1e_3+2\gamma e_2e_3)+(e_1e_2e_3)^2=
-\alpha^2-\beta^2-\gamma^2-2\alpha\beta\gamma-\beta(-2e_2e_3-2\gamma (-e_1e_2+2\alpha)+2\alpha e_3e_1)
+\gamma(2(-e_3e_1+2\gamma)+2\alpha e_2e_3+2\beta e_1e_2)-\alpha(-2e_1e_2-2\beta (-e_3e_1+2\gamma)+2\gamma e_2e_3)+(e_1e_2e_3)^2=
-\alpha^2-\beta^2-\gamma^2-2\alpha\beta\gamma-\beta(-2e_2e_3+2\gamma e_1e_2-4\alpha\gamma+2\alpha e_3e_1)
+\gamma (-2e_3e_1+4\gamma+2\alpha e_2e_3+2\beta e_1e_2)-\alpha(-2e_1e_2+2\beta e_3e_1-4\beta\gamma+2\gamma e_2e_3)+(e_1e_2e_3)^2=
-\alpha^2-\beta^2-\gamma^2+6\alpha\beta\gamma+4\gamma^2+2\alpha e_1e_2+
2\beta e_2e_3-2(\gamma+2\alpha\beta) e_3e_1+(e_1e_2e_3)^2=
-\alpha^2-\beta^2-\gamma^2+6\alpha\beta\gamma+4\gamma^2+2\alpha e_1e_2+
2\beta e_2e_3-2(\gamma+2\alpha\beta) e_3e_1+e_1e_2(-e_1e_3+2\gamma)e_2e_3=
-\alpha^2-\beta^2-\gamma^2+6\alpha\beta\gamma+4\gamma^2+2\alpha e_1e_2+
2\beta e_2e_3-2(\gamma+2\alpha\beta) e_3e_1-e_1e_2e_1e_3e_2e_3-2\gamma e_1e_3=
-\alpha^2-\beta^2-\gamma^2+6\alpha\beta\gamma+2\alpha e_1e_2+
2\beta e_2e_3-4\alpha\beta e_3e_1-e_1(-e_1e_2+2\alpha)e_3e_2e_3=
-\alpha^2-\beta^2-\gamma^2+6\alpha\beta\gamma+2\alpha e_1e_2+
2\beta e_2e_3-4\alpha\beta e_3e_1-e_2e_3e_2e_3-2\alpha e_1e_3e_2e_3=
-\alpha^2-\beta^2-\gamma^2+6\alpha\beta\gamma+2\alpha e_1e_2+
2\beta e_2e_3-4\alpha\beta e_3e_1-e_2(-e_2e_3+2\beta)e_3-2\alpha e_1(-e_2e_3+2\beta)e_3=
1-\alpha^2-\beta^2-\gamma^2+6\alpha\beta\gamma
-4\alpha\beta (e_1e_3 +e_3e_1)=1-\alpha^2-\beta^2-\gamma^2-2\alpha\beta\gamma=-\det\,Q_{\alpha,\beta,\gamma}$, as 
was stated.

\medskip

\subsection{\bf Inclusions $cV\subset \widetilde{V},c\widetilde{V}\subset V$.} 

From the above shown, as $ce_1=e_1c=-\beta -\gamma e_1e_2 +\alpha e_1e_3+e_2e_3=
-(\beta-e_2e_3)+\gamma (\alpha-e_1e_2)-\alpha\gamma+\alpha (-e_3e_1+2\gamma)=
-(\beta-e_2e_3)+\alpha (\gamma-e_3e_1)+\gamma (\alpha-e_1e_2) \in \widetilde{V}$.

Analogously one shows that $ce_2,ce_3 \in \widetilde{V}$, that is $cV\subset \widetilde{V}$.

Now let us show that $c\widetilde{V}\subset V$. Again, we are going to give a computation for some base element
of $\widetilde{V}$, leaving to the reader the similar computations with the remaining base elements.
Consider $c(\beta-e_2e_3)=
(\beta e_1-\gamma e_2+\alpha e_3 -e_1e_2e_3)(\beta-e_2e_3)=
\beta^2 e_1 -\beta\gamma e_2+\beta \alpha e_3-\beta e_1e_2e_3
-\beta e_1e_2e_3-\gamma e_3-\alpha e_3e_2e_3+e_1e_2e_3e_2e_3=
\beta^2 e_1 -\beta\gamma e_2+(\alpha\beta-\gamma) e_3-2\beta e_1e_2e_3
-\alpha (-e_2e_3+2\beta)e_3+e_1e_2(-e_2e_3+2\beta)e_3=
(\beta^2 -1)e_1 -(\beta\gamma+\alpha) e_2-(\alpha\beta+\gamma) e_3 \in V$.

\medskip

\subsection{\bf The square map $S\colon v\mapsto v^2$ sends $V \oplus \widetilde{V}$ to $\langle 1,c \rangle \subset \mathcal Z(\mathcal H)$.} 
\label{subsec-square-map}
It is easy to see that each of $V$ and $\widetilde{V}$ is mapped to $\RR\cdot 1$,
moreover as we have that every basis element in $V$ $e_1,e_2,e_3$ anticommutes with the respective two elements of the basis $\alpha-e_1e_2,\beta-e_2e_3,\gamma-e_3e_1$, we see that
in order to prove that $S(V\oplus \widetilde{V}) \subset \langle 1,c\rangle$ we only need to 
consider the symmetric products of the non-anticommuting elements, for example, $e_1(\beta-e_2e_3)+(\beta-e_2e_3)e_1=
2\beta e_1-e_1e_2e_3-e_2e_3e_1=2\beta e_1-e_1e_2e_3-e_2(-e_1e_3+2\gamma)=
2\beta e_1-2\gamma e_2 -e_1e_2e_3+e_2e_1e_3=2\beta e_1-2\gamma e_2 -e_1e_2e_3+(-e_1e_2+2\alpha)e_3=2(\beta e_1-\gamma e_2 +\alpha e_3-e_1e_2e_3)=2c$
and similarly for other respective pairs of basis elements from $V$ and $\widetilde{V}$.

\medskip


\begin{thebibliography}{10}



\bibitem {Barth} 

Barth, W., Hulek, K., Peters, C., van de Ven, A., {\it Compact complex surfaces}, 2nd ed. 1995, XII, 436 p.

\bibitem{Bea-Geometrie}
Beauville, A., et al. {
G\'eom\'etrie des surfaces $K3$ :  modules et p\'eriodes}, Papers from
the seminar held in Palaiseau, Asterisque 126 (1985), pp.1-193.

\bibitem {Bourbaki} Huybrechts, D., {\it A global Torelli theorem for hyper-K\"ahler manifolds [after M. Verbitsky]}, Ast\'erisque No. 348 (2012), pp. 375-403.

\bibitem{Twistor-lines} Buskin, N., Izadi, E., 
{\it Twistor lines in the period domain of complex tori},  arXiv:1806.07831 [math.AG].

\bibitem{Generalized-twistor-lines} Buskin, N., 
{\it A generalization of twistor lines for complex tori},  arXiv:1806.08390 [math.AG].

\bibitem{GrHarr} Griffiths, P., Harris, J., {\it Principles of complex algebraic geometry}, John Wiley \& Sons Inc., 1978, 813 p.

\bibitem{Hitchin} Hitchin, N. J., Karlhede, A., Lindstr\"om, U.,  Ro\v{c}ek, M., {\it Hyper-K\"ahler metrics and supersymmetry}, Comm. Math. Phys. Volume 108, Number 4 (1987), pp. 535-589.

\bibitem{Rat} Ratcliffe, J. G., {\it Foundations of Hyperbolic Manifolds}, Springer, 2nd ed., 2006, 779 p.

\bibitem{Verb-Torelli} Verbitsky, M., {\it Mapping class group and a global Torelli theorem for hyperkähler manifolds}, Duke Math. J. Volume 162, Number 15 (2013), 2929-2986.
\end{thebibliography}
\end{document}